\newtheorem{theorem}{Theorem}[section]
\newtheorem{proposition}[theorem]{Proposition}
\newtheorem{lemma}[theorem]{Lemma}
\theoremstyle{definition}
\newtheorem{definition}[theorem]{Definition}
\newtheorem{example}[theorem]{Example}
\newtheorem{proposition-definition}[theorem]{Proposition-Definition}
\newtheorem{corollary}[theorem]{Corollary}
\theoremstyle{remark}
\newtheorem{remark}[theorem]{Remark}
\numberwithin{equation}{section}
\def\ee{\mathbf{e}}
\def\gg{\mathbf{g}}
\def\xx{\mathbf{x}}
\def\yy{\mathbf{y}}
\def\TT{\mathbb{T}}
\def\PP{\mathbb{P}}
\def\ZZ{\mathbb{Z}}
\def\Acal{\mathcal{A}}
\def\Fcal{\mathcal{F}}
\def\trop{\mathrm{Trop}}
\def\mod{\opname{mod}\nolimits}
\newcommand{\opname}[1]{\operatorname{\mathsf{#1}}}
\newcommand{\Hom}{\opname{Hom}}
\newcommand{\Fac}{\opname{Fac}}
\newcommand{\add}{\opname{add}\nolimits}
\begin{document}

\title{Bongartz completion via $c$-vectors}


\author{Peigen Cao}
\address{Einstein Institute of Mathematics, Edmond J. Safra Campus, The Hebrew University of Jerusalem, Jerusalem
91904, Israel}
\email{peigencao@126.com}

\author{Yasuaki Gyoda}
\address{Graduate School of Mathematical Sciences, The University of Tokyo, 3-8-1 Komaba Meguro-ku Tokyo 153-8914, Japan
}
\email{gyoda-yasuaki@g.ecc.u-tokyo.ac.jp}

\author{Toshiya Yurikusa}
\address{Mathematical Institute, Tohoku University, Aoba-ku, Sendai, 980-8578, Japan}
\email{toshiya.yurikusa.d8@tohoku.ac.jp}

\subjclass[2010]{13F60, 16G20}

\date{}

\dedicatory{}

\keywords{cluster algebra, $\tau$-tilting theory, Bongartz completion, $c$-vectors, exchange quiver}

\begin{abstract}
In the present paper, we first give a characterization for Bongartz completion in $\tau$-tilting theory via $c$-vectors. Motivated by this characterization, we give the definition of Bongartz completion in cluster algebras using $c$-vectors. Then we prove the existence and uniqueness of Bongartz completion in cluster algebras. We also prove that Bongartz completion admits certain commutativity. We give two applications for Bongartz completion in cluster algebras. As the first application, we prove the full subquiver of the exchange quiver (or known as oriented exchange graph) of a cluster algebra $\mathcal A$ whose vertices consist of the seeds of $\mathcal A$ containing particular cluster variables is isomorphic to the exchange quiver of another cluster algebra. As the second application, we prove that in a $Y$-pattern over a universal semifield, each $Y$-seed (up to a $Y$-seed equivalence) is uniquely determined by the negative $y$-variables in this $Y$-seed.
\end{abstract}

\maketitle


\tableofcontents

\section{Introduction}
Cluster algebras, introduced by Fomin and Zelevinsky \cite{fzi}
around the year 2000, are commutative algebras whose generators
and relations are constructed in a recursive manner.
Among these algebras, there are the algebras of homogeneous coordinates on the Grassmannians,
on the flag varieties and on many other varieties which play
an important role in geometry and representation theory.

Two kinds of integer matrices called $C$-matrices and $G$-matrices whose columns are known as $c$-vectors and $g$-vectors, respectively, are introduced in \cite{fziv} and have played important roles in the theory of cluster algebras. They have the so-called sign-coherence property (see Theorem \ref{thm:signs-ci}), which was conjectured in \cite{fziv} and was proved in \cite{GHKK} in the general case.

Adachi, Iyama and Reiten \cite{air} introduced $\tau$-tilting theory to complete classical tilting theory from the viewpoint of mutations. In the context of $\tau$-tilting theory, there also exists the notion of $g$-vectors \cite{air} and $c$-vectors \cite{fu2017}, which are parallel to the corresponding concepts in cluster algebras.

In classic tilting theory, Bongartz completion \cite{bong} is an operation to complete a partial tilting module to a tilting module. Bongartz completion is naturally extended to $\tau$-tilting theory, which completes a $\tau$-rigid pair to a $\tau$-tilting pair \cite{air}. In cluster algebras, the first-named author and Li \cite{cl2} proved that for any cluster ${\bf x}_t$ and any subset $U$ of some cluster ${\bf x}_{t_0}$, there is a canonical method using $g$-vectors to complete $U$ to a cluster ${\bf x}_{t^\prime}$. This is known as the enough $g$-pairs property. In \cite{cao}, it was found that the completion in cluster algebras using $g$-vectors is actually Bongartz co-completion, that is, the dual of Bongartz completion, if one looks at it in a Hom-finite $2$-Calabi-Yau triangulated category with cluster tilting objects \cite{bmrrt}.

In the present paper, we first give a characterization for Bongartz completion in $\tau$-tilting theory via $c$-vectors (see Theorem \ref{thm-tau}). Motivated by this characterization, we give the definition of Bongartz completion in cluster algebras using $c$-vectors. Then we prove the existence and uniqueness of Bongartz completion in cluster algebras using the enough $g$-pairs property and the tropical dualities between $C$-matrices and $G$-matrices \cite{nz} (see Theorem \ref{bongartz-completion}). We also prove that Bongartz completion admits certain commutativity (see Theorem \ref{thmcommu}).

 Finally, we give two applications of Bongartz completion in cluster algebras. As the first application, we prove the full subquiver of the exchange quiver (or known as oriented exchange graph) of a cluster algebra $\mathcal A$ whose vertices consist of the seeds of $\mathcal A$ containing particular cluster variables is isomorphic to the exchange quiver of another cluster algebra (see Theorem \ref{thmquiver}). As the second application, we prove that in a $Y$-pattern over a universal semifield, each $Y$-seed (up to a $Y$-seed equivalence) is uniquely determined by the negative $y$-variables in this $Y$-seed (see Theorem \ref{thmpoisson}).

\section{$\tau$-tilting theory}

In this section, we recall $\tau$-tilting theory \cite{air}. We fix a finite dimensional basic algebra $A$ over an algebraically closed field $K$. We denote by $\mod A$ the category of finitely generated left $A$-modules and by $\tau$ the Auslander-Reiten translation in $\mod A$.

For a full subcategory $\mathcal C$ of $\mod A$, we define some full subcategories of $\mod A$ as follows:
\begin{itemize}
\item $\add\mathcal C$ is the additive closure of $\mathcal C$;
\item $\Fac\mathcal C$ consists of the factor modules of objects in $\add\mathcal C$;
\item $\mathcal C^\bot$ and $\prescript{\bot}{}{\mathcal C}$ are the full subcategories given by
\begin{eqnarray}
 \mathcal C^\bot&:=&\{X\in \mod A| \Hom_{A}(\mathcal C, X)=0 \},\nonumber\\
  \prescript{\bot}{}{\mathcal C}&:=&\{X\in \mod A | \Hom_{A}(X,\mathcal C)=0 \}.\nonumber
\end{eqnarray}
\end{itemize}

\subsection{$\tau$-tilting pair}
Given a module $M\in\mod A$,
\begin{itemize}
    \item we say that $M$ is a \emph{brick} if $\Hom_A(M,M)$ is a division ring;
    \item we say that $M$ is  \emph{$\tau$-rigid} if $\Hom_A(M,\tau M)=0$;
    \item we denote by $|M|$ the number of non-isomorphic indecomposable direct summands of $M$.
\end{itemize}

\begin{definition}[$\tau$-rigid and $\tau$-tilting pair]
Let $M$ be a module in $\mod A$ and $P$ a projective module in $\mod A$. The pair $(M,P)$ is called \emph{$\tau$-rigid} if $M$ is $\tau$-rigid and $\Hom_A(P,M)=0$.
It is called \emph{$\tau$-tilting} (resp., \emph{almost $\tau$-tilting}) if, moreover, it satisfies $|M|+|P|=n=|A|$ (resp., $|M|+|P|=n-1=|A|-1$).
\end{definition}

Note that a module $M\in \mathrm{mod} A$ is called a \emph{support $\tau$-tilting module} if there exists a basic projective module $P$ such that $(M,P)$ is a $\tau$-tilting pair. In fact, such a basic projective module $P$ is unique up to isomorphisms \cite{air}*{Proposition 2.3}.
A $\tau$-rigid pair $(M,P)$ is \emph{indecomposable} if $M\oplus P$ is indecomposable in $\mod A$ and it is \emph{basic} if both $M$ and $P$ are basic in $\mod A$. In the sequel, we always assume that the $\tau$-rigid pairs are basic and consider them up to isomorphisms.

\if0
\begin{proposition}[\cite{air}] \label{prosincere}
Let $(M,P)$ and $(M,Q)$ be two basic $\tau$-tilting pairs in $\mathrm{mod}A$, then $P\cong Q$.
\end{proposition}
\fi

\begin{theorem}[\cite{air}*{Theorem 2.18}] \label{thmair}
Any basic almost $\tau$-tilting pair $(U,Q)$ in $\mod A$ is a direct summand of exactly two basic $\tau$-tilting pairs $(M,P)$ and $(M^\prime,P^\prime)$ in $\mod A$. Moreover, we have $$\{\Fac M,\Fac M^\prime\}=\{\Fac U,\prescript{\bot}{}{(\tau U)}\cap Q^\bot\}.$$
In particular, either $\Fac M\subsetneqq \Fac M^\prime$ or
$\Fac M^\prime\subsetneqq\Fac M$ holds.
\end{theorem}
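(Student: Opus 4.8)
The plan is to transport the statement into the lattice of torsion classes of $\mod A$, via the bijection $T\mapsto\Fac T$ between basic support $\tau$-tilting modules — equivalently, basic $\tau$-tilting pairs $(T,P_T)$ — and functorially finite torsion classes. Under this bijection I would identify the $\tau$-tilting pairs having $(U,Q)$ as a direct summand with the functorially finite torsion classes $\mathcal T$ satisfying $U\in\mathcal T$, $\Hom_A(\mathcal T,\tau U)=0$ and $\Hom_A(Q,\mathcal T)=0$; the theorem then becomes the claim that exactly two such $\mathcal T$ exist, forming the chain $\Fac U\subsetneq\prescript{\bot}{}{(\tau U)}\cap Q^\bot$.

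First I would record the elementary facts. For any $\tau$-rigid pair $(U,Q)$ one has $\Fac U\subseteq\prescript{\bot}{}{(\tau U)}\cap Q^\bot$: a surjection $U^{\oplus k}\twoheadrightarrow X$ gives an injection $\Hom_A(X,\tau U)\hookrightarrow\Hom_A(U^{\oplus k},\tau U)=0$ by $\tau$-rigidity, and a surjection $\Hom_A(Q,U^{\oplus k})\twoheadrightarrow\Hom_A(Q,X)$ with $\Hom_A(Q,U)=0$ since $Q$ is projective. Also $\prescript{\bot}{}{(\tau U)}$ is a torsion class (closed under quotients since $X\twoheadrightarrow Y$ induces $\Hom_A(Y,\tau U)\hookrightarrow\Hom_A(X,\tau U)$, and under extensions by the long exact sequence), $Q^\bot$ is a torsion class since $Q$ is projective, and hence so is their intersection. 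For the translation dictionary I would verify that a functorially finite torsion class $\mathcal T=\Fac T$ in this interval comes from a pair containing $(U,Q)$: from $\mathcal T\subseteq Q^\bot$ one gets $\Hom_A(Q,T)=0$, and since the projective part of a support $\tau$-tilting module $T$ is exactly $\bigoplus\{P_i:\Hom_A(P_i,T)=0\}$ (using $|T|\le\#\{i:\Hom_A(P_i,T)\neq0\}$ for $\tau$-rigid $T$), this forces $Q\mid P_T$; and $U\in\Fac T$ together with $\Hom_A(T,\tau U)=0$ (which holds as $\mathcal T\subseteq\prescript{\bot}{}{(\tau U)}$) makes $(T\oplus U,P_T)$ a $\tau$-rigid pair, necessarily of total rank $\le n$, forcing $U\mid T$.

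The substantive content is then: (i) both endpoints are functorially finite, and (ii) the interval between them contains no other functorially finite torsion class. For the upper endpoint I would prove the $\tau$-tilting analogue of the \emph{Bongartz lemma}: for any $\tau$-rigid pair $(U,Q)$ the torsion class $\prescript{\bot}{}{(\tau U)}\cap Q^\bot$ is functorially finite, equals $\Fac$ of a support $\tau$-tilting module containing $U$ with projective part containing $Q$ (constructed from a universal extension of copies of $U$), and is \emph{maximal} among torsion classes whose associated $\tau$-tilting pair contains $(U,Q)$. The lower endpoint and assertion (ii) are where the hypothesis that $(U,Q)$ is \emph{almost} complete enters: the Bongartz completion adds a single indecomposable summand to $(U,Q)$; mutating that summand yields a second $\tau$-tilting pair containing $(U,Q)$, and a count of summands through the dictionary above pins its torsion class to $\Fac U$, so in particular $\Fac U$ is functorially finite; and any further $\tau$-tilting pair containing $(U,Q)$ again differs from it by a single indecomposable, so the dictionary together with the maximality of the Bongartz completion forces its torsion class to be one of the two endpoints. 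This gives exactly two such pairs, the displayed equality $\{\Fac M,\Fac M'\}=\{\Fac U,\prescript{\bot}{}{(\tau U)}\cap Q^\bot\}$, and — the two pairs being distinct with comparable torsion classes — the strict inclusion.

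The main obstacle is the functorial-finiteness package on which everything rests: establishing the bijection between functorially finite torsion classes and support $\tau$-tilting modules, proving the Bongartz lemma and its maximality for $\tau$-rigid \emph{pairs}, and combining these with the almost-complete hypothesis to see that $\Fac U$ is functorially finite and that the interval above it contains exactly the two endpoints. By comparison the rest — the closure properties of $\prescript{\bot}{}{(\tau U)}$ and $Q^\bot$, and the counting of indecomposable summands — is routine.
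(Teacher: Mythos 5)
The paper does not prove this statement at all — it is quoted verbatim from Adachi--Iyama--Reiten — so your sketch has to be judged against the proof in that reference, and your general framework is indeed the standard one: translate completions of $(U,Q)$ into functorially finite torsion classes $\mathcal T$ with $\Fac U\subseteq\mathcal T\subseteq\prescript{\bot}{}{(\tau U)}\cap Q^\bot$ (this dictionary is exactly Proposition \ref{prominmax}), and realize the upper endpoint by the Bongartz-type lemma for $\tau$-rigid pairs.

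The genuine gap is the counting step, which is the whole content of the theorem. Under your dictionary, Theorem \ref{thmair} is equivalent to saying that the interval $[\Fac U,\ \prescript{\bot}{}{(\tau U)}\cap Q^\bot]$ contains \emph{exactly two} functorially finite torsion classes: the endpoints are distinct, and no completion of $(U,Q)$ sits strictly in between. Your justification of this is circular and, at the decisive point, a non sequitur. ``Mutating the added summand of the Bongartz completion'' presupposes that the almost complete pair obtained by deleting that summand — namely $(U,Q)$ itself — admits a second completion, which is precisely the existence half of the theorem; no independent construction of this second pair is given, nor is it shown to be distinct from the Bongartz completion (your strict inclusion $\Fac M\subsetneqq\Fac M'$ is then deduced from a distinctness you never established). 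Likewise, ``maximality of the Bongartz completion together with a count of summands forces the torsion class of any other completion to be one of the two endpoints'' does not follow: maximality only gives the upper bound $\Fac T\subseteq\prescript{\bot}{}{(\tau U)}\cap Q^\bot$ for every completion $(T,P)$, and counting summands only tells you that $T=U\oplus X$ with $X$ indecomposable (or that a projective was added to $Q$). What must be proved is that if the added summand is a module $X\notin\Fac U$, then $\Fac(U\oplus X)$ is forced to equal $\prescript{\bot}{}{(\tau U)}\cap Q^\bot$ rather than something intermediate; this is where the real work lies in AIR's argument (it uses the explicit universal-extension construction of the Bongartz completion and Ext-projectivity in the torsion class), and your sketch supplies nothing in its place. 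A smaller point in the same direction: functorial finiteness of $\Fac U$ needs neither the almost-complete hypothesis nor any mutation — it holds for every $\tau$-rigid $U$ by Auslander--Smal\o\ and is part (i) of Proposition \ref{prominmax} — so deriving it from the hypothetical second completion is backwards.
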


\begin{definition}[Left and right mutation]
Keep the notations in Theorem \ref{thmair}. We call the operation $(M,P)\mapsto(M^\prime,P^\prime)$ a \emph{mutation} of $(M,P)$. If $\Fac M\subsetneqq \Fac M^\prime$ holds, we call $(M^\prime, P^\prime)$ a {\em right mutation} of $(M,P)$. If $\Fac M^\prime \subsetneqq \Fac M$ holds, we call $(M^\prime, P^\prime)$ a {\em left mutation} of $(M,P)$.
\end{definition}

\subsection{Functorially finite torsion classes}

A \emph{torsion pair} $(\mathcal{T},\mathcal{F})$ in  $\mod A$ is a pair of subcategories of $\mod A$ satisfying that
\begin{itemize}
\item[(i)] $\Hom_A(T,F)=0$ for any $T\in \mathcal{T}$ and $F\in \mathcal{F}$;
\item[(ii)] for any module $X\in \mod A$, there exists a short exact sequence
 $$0\rightarrow X^{\mathcal{T}}\rightarrow X\rightarrow X^{\mathcal{F}}\rightarrow0$$ with $X^{\mathcal{T}}\in\mathcal{T}$ and $X^{\mathcal{F}}\in \mathcal{F}$. Thanks to the condition (i), such a sequence is unique up to isomorphisms. This short exact sequence is called the \emph{canonical sequence} of $X$ with respect to $(\mathcal{T},\mathcal{F})$.
\end{itemize}

The subcategory $\mathcal T$ (resp., $\mathcal F$) in a torsion pair $(\mathcal T,\mathcal F)$ is called a \emph{torsion class} (resp., \emph{torsionfree class}) of $\mod A$. A torsion class $\mathcal T$ is said \emph{functorially finite} if it is functorially finite as a subcategory of $\mod A$, that is, $\mathcal T$ is both contravariantly finite and covariantly finite in $\mod A$, cf. \cite{AS_1981}*{Page 432}.

\begin{theorem}[\cite{air}*{{Proposition 1.2 (b) and Theorem 2.7}}]\label{proorder}
There is a well-defined map $\Psi$ 
from $\tau$-rigid pairs to functorially finite torsion classes in $\mod A$
given by $(M,P)\mapsto \Fac M$.
Moreover, $\Psi$ is a bijection if we restrict it to basic $\tau$-tilting pairs.
\end{theorem}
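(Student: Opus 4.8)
The plan is to show that $\Fac M$ is always a functorially finite torsion class when $M$ is $\tau$-rigid (so that $\Psi$ is well defined, the projective summand $P$ being irrelevant to $\Psi$), and then to construct an explicit inverse on basic $\tau$-tilting pairs via the Ext-projective generator of a torsion class.

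\textbf{Well-definedness.} Since $\Fac M$ is closed under quotients by construction, it remains to check closure under extensions and functorial finiteness. For extensions I would first record the identity $\Ext^1_A(M,\Fac M)=0$: given $N\in\Fac M$, any morphism $N\to\tau M$ vanishes after precomposition with an epimorphism $M^k\twoheadrightarrow N$ because $\Hom_A(M,\tau M)=0$, hence $\Hom_A(N,\tau M)=0$, and the Auslander--Reiten formula $\Ext^1_A(M,N)\cong D\,\overline{\Hom}_A(N,\tau M)$ (with $D=\Hom_K(-,K)$ and $\overline{\Hom}_A$ taken modulo maps factoring through injectives) then gives $\Ext^1_A(M,N)=0$. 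A standard pullback along a short exact sequence $0\to X\to Y\to Z\to 0$ with $X,Z\in\Fac M$ splits off a power of $M$ and exhibits $Y$ as a quotient of a power of $M$, so $\Fac M$ is a torsion class in which $M$ is Ext-projective. Contravariant finiteness is automatic for any torsion class (the torsion radical $t(-)\hookrightarrow(-)$ provides right approximations); covariant finiteness of $\Fac M$, equivalently contravariant finiteness of the torsion-free class $(\Fac M)^\bot$, is where $\tau$-rigidity genuinely enters, and I would extract it from the approximation machinery of Auslander--Smal\o{} as recalled in \cite{air}.

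\textbf{Injectivity on basic $\tau$-tilting pairs.} Suppose $(M,P)$ and $(M',P')$ are basic $\tau$-tilting pairs with $\mathcal T:=\Fac M=\Fac M'$. By the previous paragraph $M$ and $M'$ are Ext-projective in $\mathcal T$ and generate it; using the numerical condition $|M|+|P|=n$ one checks that $\add M$ is in fact \emph{all} of the Ext-projective objects of $\mathcal T$ (an Ext-projective outside $\add M$ would enlarge the $\tau$-rigid pair beyond size $n$), and likewise for $M'$. Hence $\add M=\add M'$, so $M\cong M'$ as both are basic, and then $P\cong P'$ by the uniqueness of the projective part of a support $\tau$-tilting module, \cite{air}*{Proposition 2.3}.

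\textbf{Surjectivity.} Given a functorially finite torsion class $\mathcal T$, Auslander--Smal\o{} supply a basic module $M\in\mathcal T$ that is an Ext-projective generator, i.e. $\add M$ consists of the Ext-projectives of $\mathcal T$ and $\Fac M=\mathcal T$. The decisive step, and the one I expect to cost the most effort, is to promote the Ext-rigidity $\Ext^1_A(M,\mathcal T)=0$ to genuine $\tau$-rigidity $\Hom_A(M,\tau M)=0$: the Auslander--Reiten formula gives $\overline{\Hom}_A(M,\tau M)=0$, and one must rule out nonzero maps $M\to\tau M$ factoring through an injective module, which is where the equality $\Fac M=\mathcal T$ (forcing $M$ to generate all of $\mathcal T$) is used. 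Granting this, let $P$ be the direct sum of the indecomposable projectives $P_i$ with $\Hom_A(P_i,M)=0$; then $(M,P)$ is a $\tau$-rigid pair, and a count of composition factors (or the identification $\mathcal T=\prescript{\bot}{}{(\tau M)}\cap P^\bot$) yields $|M|+|P|=n$, so $(M,P)$ is $\tau$-tilting with $\Psi(M,P)=\Fac M=\mathcal T$. Thus the main obstacle is concentrated in surjectivity — proving the Ext-projective generator is $\tau$-rigid and verifying the identity $|M|+|P|=n$; everything else reduces to the standard dictionary between $\tau$-rigidity, Ext-vanishing in $\Fac M$, and the Auslander--Reiten formula.
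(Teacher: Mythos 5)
The paper itself offers no proof of this statement: it is quoted verbatim from Adachi--Iyama--Reiten, so the only benchmark is the standard proof there (built on Auslander--Smal\o{}), and your outline does follow that route --- well-definedness via $\Ext^1_A(M,\Fac M)=0$ and extension-closure, and an inverse on basic $\tau$-tilting pairs through the Ext-projective generator of a functorially finite torsion class. Your direct verifications (the vanishing $\Hom_A(N,\tau M)=0$ for $N\in\Fac M$, the pullback argument for extension-closure, contravariant finiteness of any torsion class via the torsion submodule, injectivity via $\add M=\add M'$ and the uniqueness of $P$) are correct.

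The difficulty is that the three steps you defer are exactly the substance of the theorem, and as stated they are either misattributed or circular. First, covariant finiteness of $\Fac M$ is not ``where $\tau$-rigidity genuinely enters'': $\tau$-rigidity is what gives extension-closure (which you did prove), whereas functorial finiteness comes from Smal\o{}'s theorem that a torsion class is functorially finite if and only if it is of the form $\Fac M$ for a single module; this approximation-theoretic input is the real content of well-definedness and you give no argument for it. Second, the ``decisive step'' of surjectivity is precisely the Auslander--Smal\o{} equivalence $\Hom_A(M,\tau N)=0\Leftrightarrow\Ext^1_A(N,\Fac M)=0$; your plan to upgrade $\overline{\Hom}_A(M,\tau M)=0$ by ruling out maps factoring through injectives using $\Fac M=\mathcal T$ is not carried out, and it is this lemma (not an ad hoc patch) that has to be invoked or reproved. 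Third, the count $|M|+|P|=n$ via ``the identification $\mathcal T={}^{\bot}(\tau M)\cap P^{\bot}$'' is circular: that identification, equivalently the fact that $\tau$-tilting pairs are exactly the maximal $\tau$-rigid pairs, is itself one of the main theorems of the AIR paper and is as deep as what you are proving; the same maximality (together with the bound that a $\tau$-rigid pair has at most $n$ indecomposable summands) is also used silently in your injectivity step (``would enlarge the $\tau$-rigid pair beyond size $n$''). So the skeleton is the right one, but each load-bearing step still needs an actual proof or a precise citation rather than a placeholder.
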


\begin{proposition}[\cite{air}*{Proposition 2.9 and Theorem 2.10}]\label{prominmax}
 Let $(U,Q)$ be a basic $\tau$-rigid pair and $\mathcal T$ a functorially finite torsion class in $\mod A$. Then
 \begin{itemize}
 \item[(i)]  $\Fac U$ and  $\prescript{\bot}{}({\tau U})\cap Q^\bot$ are functorially finite torsion classes in $\mod A$;
 \item[(ii)]  $\Fac U\subseteq \mathcal T\subseteq \prescript{\bot}{}({\tau U})\cap Q^\bot$ if and only if $(U,Q)$ is a direct summand of the basic $\tau$-tilting pair $\Psi^{-1}(\mathcal T)$, where $\Psi$ is the bijection given in Theorem \ref{proorder}.
 \end{itemize}
\end{proposition}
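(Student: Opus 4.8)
The plan is to split along the two parts. Part (i) I would get by recognizing that the torsion-class property is soft and the genuine content is functorial finiteness, which is nothing but the existence of Bongartz completion in $\tau$-tilting theory. Part (ii) I would prove directly and self-containedly from Theorem~\ref{proorder} and the arithmetic of $\tau$-rigid pairs.

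\textbf{Part (i).} First observe that $\prescript{\bot}{}{(\tau U)}$ and $Q^\bot$ are automatically torsion classes: for any module $N$, left-exactness of $\Hom_A(-,N)$ shows $\prescript{\bot}{}{N}$ is closed under quotients and extensions, and $Q^\bot$ is closed under quotients, extensions (and submodules) because $Q$ is projective, so $\Hom_A(Q,-)$ is exact; an intersection of torsion classes is a torsion class, hence so is $\prescript{\bot}{}{(\tau U)}\cap Q^\bot$. That $\Fac U$ is a torsion class is standard once $U$ is $\tau$-rigid (AR duality gives $\Ext^1_A(U,\Fac U)=0$, which yields closure under extensions). The real content is \emph{functorial finiteness}: for $\Fac U$ this is the general fact that $\Fac M$ is functorially finite for every module $M$; for $\prescript{\bot}{}{(\tau U)}\cap Q^\bot$ I would invoke Bongartz completion, \cite{air}. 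Concretely, constructing the Bongartz complement $\bar U$ of $U$ by a universal extension (the $\tau$-rigid analogue of Bongartz's classical construction, carried out over the support algebra cut out by $Q$) produces a basic $\tau$-tilting pair $(U\oplus\bar U,Q)$ with $\Fac(U\oplus\bar U)=\prescript{\bot}{}{(\tau U)}\cap Q^\bot$, after which functorial finiteness is immediate from Theorem~\ref{proorder}. This existence statement is the one place I expect real difficulty; it is the crux of \cite{air}, and in a write-up I would cite it rather than reprove it.

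\textbf{Part (ii), the ``if'' direction.} Suppose $(U,Q)$ is a direct summand of $(M,P):=\Psi^{-1}(\mathcal T)$, so $\mathcal T=\Fac M$. Then $\Fac U\subseteq\Fac M=\mathcal T$ is immediate. For the reverse inclusion I would take $X\in\Fac M$, write it as a quotient of $M^{\oplus k}$, and use left-exactness of $\Hom_A(-,\tau M)$ with $\Hom_A(M,\tau M)=0$ to get $\Hom_A(X,\tau M)=0$, hence $\Hom_A(X,\tau U)=0$ since $\tau U$ is a summand of $\tau M$; and exactness of $\Hom_A(Q,-)$ with $\Hom_A(Q,M)=0$ (valid since $Q\in\add P$) to get $\Hom_A(Q,X)=0$. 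Thus $\mathcal T\subseteq\prescript{\bot}{}{(\tau U)}\cap Q^\bot$. Running the same computation on the $\tau$-tilting pair $(M,P)$ itself records the inclusion $\Fac M\subseteq\prescript{\bot}{}{(\tau M)}\cap P^\bot$, which I will reuse.

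\textbf{Part (ii), the ``only if'' direction.} Assume $\Fac U\subseteq\mathcal T\subseteq\prescript{\bot}{}{(\tau U)}\cap Q^\bot$ and set $(M,P):=\Psi^{-1}(\mathcal T)$; it suffices to show $U\in\add M$ and $Q\in\add P$. Writing $U=\bigoplus_i U_i$ with $U_i$ indecomposable, I would check that $(M\oplus U_i,P)$ is again a $\tau$-rigid pair: $\Hom_A(M,\tau M)=0$ and $\Hom_A(U_i,\tau U_i)=0$ by $\tau$-rigidity; $\Hom_A(M,\tau U_i)=0$ because $M\in\Fac M=\mathcal T\subseteq\prescript{\bot}{}{(\tau U)}\subseteq\prescript{\bot}{}{(\tau U_i)}$; $\Hom_A(U_i,\tau M)=0$ because $U_i\in\Fac U\subseteq\mathcal T=\Fac M\subseteq\prescript{\bot}{}{(\tau M)}$ by the inclusion just recorded; and $\Hom_A(P,M\oplus U_i)=0$ since $\Hom_A(P,M)=0$ and $U_i\in\Fac M\subseteq P^\bot$. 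As a $\tau$-rigid pair has at most $n=|A|$ nonisomorphic indecomposable summands \cite{air} while $|M|+|P|=n$ already, this forces $U_i\in\add M$, hence $U\in\add M$. Likewise $\Hom_A(Q,M)=0$ (from $M\in\mathcal T\subseteq Q^\bot$) makes $(M,P\oplus Q)$ a $\tau$-rigid pair with projective second component, forcing $Q\in\add P$; so $(U,Q)$ is a direct summand of $\Psi^{-1}(\mathcal T)$. The only delicate points here are the bookkeeping of the four Hom-vanishings—each reduced to the two-sided squeeze together with the principle that $\Fac$ of a $\tau$-rigid pair lies inside its own double orthogonal—and the appeal to the summand bound of \cite{air} to promote ``$\tau$-rigid'' to ``direct summand''.
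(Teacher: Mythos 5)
This proposition is one the paper does not prove at all: it is quoted from \cite{air}, so there is no internal argument to compare against, and your proposal is in effect a reconstruction of the Adachi--Iyama--Reiten proof. As such it is essentially correct: the torsion-class verifications in (i), the four Hom-vanishing computations in the ``if'' direction of (ii), and the ``only if'' argument via showing $(M\oplus U_i,P)$ and $(M,P\oplus Q)$ are $\tau$-rigid and invoking the bound $|N|+|R|\le n$ for $\tau$-rigid pairs all check out, and the two places where you defer to \cite{air} (existence of the Bongartz complement with $\Fac(U\oplus\bar U)=\prescript{\bot}{}{(\tau U)}\cap Q^\bot$, and the summand bound) are precisely the hard inputs of that paper, so citing them puts your write-up on the same footing as the paper's own citation. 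One caution: the assertion that ``$\Fac M$ is functorially finite for every module $M$'' is stronger than the standard results you would cite; what Auslander--Smal\o{} and Smal\o{} give (and what AIR use) is that $\Fac M$ is always contravariantly finite, and that it is functorially finite when it is a torsion class, e.g.\ when $M$ is $\tau$-rigid or more generally $\Ext^1_A(M,\Fac M)=0$. Since $U$ is $\tau$-rigid here, that weaker statement suffices, so you should phrase the functorial finiteness of $\Fac U$ that way (or cite \cite{air}*{Proposition 1.2}) rather than rely on the unqualified claim.
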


\begin{definition}[Bongartz completion and Bongartz co-completion] Let $(U,Q)$ be a basic $\tau$-rigid pair in $\mod A$ and  $\Psi$ the bijection in Theorem \ref{proorder}.
\begin{itemize}
\item[(i)] The basic $\tau$-tilting pair $\Psi^{-1}(\Fac U)$ is
called the \emph{Bongartz co-completion} of $(U,Q)$.
 \item[(ii)] The basic $\tau$-tilting pair $\Psi^{-1}(\prescript{\bot}{}({\tau U})\cap Q^\bot)$ is called the \emph{Bongartz completion} of $(U,Q)$.
\end{itemize}
\end{definition}

\subsection{$g$-vectors and $c$-vectors in $\tau$-tilting theory}
Let $A = \bigoplus\limits_{i=1}^n P_i$ be a decomposition of $A$, where $P_i$ is an indecomposable projective $A$-module.  For $M\in\mod A$, we take a minimal projective presentation of $M$
\[\xymatrix{\bigoplus\limits_{i=1}^nP_i^{b_i}\ar[r]&\bigoplus\limits_{i=1}^nP_i^{a_i}\ar[r]&M\ar[r]&0}.\]
The column vector $\mathbf{ g}(M):=(a_1-b_1,\cdots,a_n-b_n)^\mathrm{T}\in\mathbb Z^n$ is called the \emph{$g$-vector} of $M$, {where $X^{\rm T}$ denotes the transpose of a matrix $X$.}
For a $\tau$-rigid pair $(M,P)$, its \emph{$g$-vector} is defined to be $$\mathbf{ g}(M,P):=\mathbf{ g}(M)-\mathbf{ g}(P) \in \ZZ^n.$$

When we write a basic $\tau$-tilting pair $(M,P)$ as the form $(M,P)=\bigoplus\limits_{i=1}^n(M_i,Q_i)$, we always mean that each $(M_i,Q_i)$ is an indecomposable $\tau$-rigid pair, that is, $M_i\oplus Q_i$ is indecomposable in $\mod A$.

For a basic $\tau$-tilting pair $(M,P)=\bigoplus\limits_{i=1}^n(M_i,Q_i)$, its \emph{$G$-matrix} is the following integer matrix 

$$G_{(M,P)}:=(\mathbf{ g}(M_1,Q_1),\cdots,\mathbf{ g}(M_n,Q_n)),$$ whose columns are the $g$-vectors $\mathbf{ g}(M_1,Q_1),\cdots,\mathbf{ g}(M_n,Q_n)$. Thanks to the theorem below, there exists another integer matrix $C_{(M,P)}$ associated to $(M,P)$, which is given by
$C_{(M,P)}:=(G_{(M,P)}^\mathrm{T})^{-1}$. The matrix $C_{(M,P)}$ is called the \emph{$C$-matrix} of $(M,P)$ and its column vectors are called \emph{$c$-vectors}.

\begin{theorem}[\cite{air}*{Theorem 5.1}] Let $(M,P)=\bigoplus\limits_{i=1}^n(M_i,Q_i)$ be a basic $\tau$-tilting pair in $\mod A$. Then $\mathbf{ g}(M_1,Q_1),\cdots,\mathbf{ g}(M_n,Q_n)$ form a $\mathbb Z$-basis of $\mathbb Z^n$.
\end{theorem}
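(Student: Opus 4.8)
The plan is to show that the integer matrix $G_{(M,P)}$ whose columns are $\mathbf{g}(M_1,Q_1),\dots,\mathbf{g}(M_n,Q_n)$ has determinant $\pm 1$; since these are $n$ vectors in $\mathbb{Z}^n$, this is equivalent to their forming a $\mathbb{Z}$-basis (and strictly stronger than $\mathbb{Q}$-linear independence). For the unit pair $(A,0)=\bigoplus_{i=1}^n(P_i,0)$ the minimal projective presentation of $P_i$ is $0\to P_i\xrightarrow{\ \mathbf{1}\ }P_i\to 0$, so $\mathbf{g}(P_i)=e_i$ and $G_{(A,0)}=I_n$; likewise $G_{(0,A)}=-I_n$. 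So it suffices to prove that $|\det G_{(M,P)}|$ is a mutation invariant and that every basic $\tau$-tilting pair can be connected to $(A,0)$ through mutations.

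For the invariance, let $(M,P)$ and $(M',P')$ be the two basic $\tau$-tilting pairs having a fixed basic almost $\tau$-tilting pair $(U,Q)$ as a common direct summand (Theorem~\ref{thmair}), and say they differ in their $k$-th indecomposable summand, $(M_k,Q_k)\neq(M'_k,Q'_k)$. Then $G_{(M,P)}$ and $G_{(M',P')}$ agree in every column but the $k$-th. The description $\{\Fac M,\Fac M'\}=\{\Fac U,\,{}^{\bot}(\tau U)\cap Q^{\bot}\}$ from Theorem~\ref{thmair}, together with the associated canonical sequences, produces an ``exchange sequence'' between $(M_k,Q_k)$ and $(M'_k,Q'_k)$ whose remaining term lies in $\add\bigl(\bigoplus_{i\neq k}(M_i,Q_i)\bigr)$; tracking minimal projective presentations along this sequence yields a relation $\mathbf{g}(M_k,Q_k)+\mathbf{g}(M'_k,Q'_k)=\sum_{i\neq k} n_i\,\mathbf{g}(M_i,Q_i)$ with $n_i\in\mathbb{Z}_{\ge 0}$. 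Thus $G_{(M',P')}$ is obtained from $G_{(M,P)}$ by negating the $k$-th column and adding to it an integer combination of the other columns, operations that multiply the determinant by $-1$. Hence $|\det G_{(M,P)}|=|\det G_{(M',P')}|$.

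To globalize, one uses connectedness of the mutation graph of basic $\tau$-tilting pairs --- equivalently, that from any $(M,P)$ one reaches $(A,0)$ by finitely many mutations: whenever $\Fac M\subsetneq\mod A$, Theorem~\ref{thmair} and Proposition~\ref{prominmax} supply an index at which the mutation is a right mutation strictly enlarging the torsion class (namely $\Psi^{-1}({}^{\bot}(\tau U)\cap Q^{\bot})$), and a termination argument, using that $\mod A=\Psi(\,(A,0)\,)$ is the top torsion class, brings us to $(A,0)$. Combined with the previous paragraph this gives $|\det G_{(M,P)}|=|\det G_{(A,0)}|=1$, which is the assertion. I expect the main obstacle to be exactly this globalization: pinning down termination/connectedness of the mutation process. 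A secondary subtlety is the claim in the second paragraph that $\mathbf{g}$ is additive along the exchange sequence, which is not automatic since projective dimensions in $\mod A$ need not be bounded, so one must check that the relevant presentations assemble without cancellation. (Alternatively, $\mathbb{Q}$-linear independence of the $\mathbf{g}(M_i,Q_i)$ can be read directly off the homological pairing $\langle\mathbf{g}(X),[Y]\rangle=\dim_K\Hom_A(X,Y)-\dim_K\Hom_A(Y,\tau X)$ evaluated on the summands of $(M,P)$, producing an explicit nonsingular integer matrix; but upgrading ``$\mathbb{Q}$-basis'' to ``$\mathbb{Z}$-basis'' still requires an argument like the mutation one above.)
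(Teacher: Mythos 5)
There is a genuine gap, and it is exactly at the step you flagged as ``globalization''. The paper itself gives no argument here (the statement is imported from \cite{air}*{Theorem 5.1}), and the proof there does \emph{not} run through mutation-connectedness: it identifies basic $\tau$-tilting pairs with $2$-term silting complexes in $K^b(\proj A)$, under which $\mathbf{g}(M_i,Q_i)$ becomes the class of the corresponding complex in $K_0(\proj A)$, and then quotes the theorem of Aihara--Iyama that the indecomposable summands of any silting object give a $\ZZ$-basis of the Grothendieck group. This detour is not cosmetic: connectedness of the mutation graph of basic $\tau$-tilting pairs is precisely what is \emph{not} available for a general finite dimensional algebra (AIR only prove that a finite connected component must be the whole exchange quiver), so any proof that needs to walk from $(M,P)$ to $(A,0)$ by finitely many mutations is resting on an unproved statement.

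Your proposed termination argument does not repair this. Lemma \ref{lemair} (AIR Theorem 2.35), applied with $N=A$, only says: if $\Fac M\subsetneqq \mod A$ then \emph{some} right mutation $(M',P')$ satisfies $\Fac M'\subseteq \mod A$ --- which is vacuous, since every torsion class is contained in $\mod A$. So the lemma gives no control steering the walk toward $(A,0)$, and strictly increasing chains of functorially finite torsion classes can be infinite: over the Kronecker algebra, iterating right mutations from $(0,A)$ along the preinjective direction produces the infinite chain $0\subsetneqq\add I_1\subsetneqq\add(I_1\oplus I_2)\subsetneqq\cdots$ that never reaches $\mod A$. Hence ``torsion classes strictly increase'' is not a termination measure, and your induction has no base to reduce to. A secondary, fixable issue is the exchange relation $\mathbf{g}(M_k,Q_k)+\mathbf{g}(M'_k,Q'_k)=\sum_{i\neq k}n_i\,\mathbf{g}(M_i,Q_i)$ with $n_i\ge 0$: this is true, but minimal projective presentations are not additive on short exact sequences in $\mod A$, so ``tracking presentations along the exchange sequence'' needs the $2$-term silting picture (where the exchange triangle $P_k\to E\to P'_k\to P_k[1]$ with $E\in\add(\bigoplus_{i\neq k}P_i)$ makes the relation immediate in $K_0(\proj A)$). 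Once you are in that setting anyway, the clean route is the one of \cite{air}: prove the basis statement directly from the silting property, with no appeal to connectedness; your pairing remark correctly yields only $\QQ$-linear independence and cannot by itself recover the $\ZZ$-basis claim.
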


\begin{theorem}[\cite{air}*{Theorem 5.5}]
\label{ginjection}
The map $(M,P)\mapsto{\bf g}(M,P)$ gives an injection from the set of isomorphism classes of $\tau$-rigid pairs in $\mod A$ to $\mathbb Z^n$.
\end{theorem}

\section{Cluster algebras}

\subsection{Cluster algebras}\label{sec31}
We recall that a \emph{semifield} $\mathbb P$ is an abelian multiplicative group equipped with an addition $\oplus$ which is distributive over the multiplication. Now we give two important examples of semifield: tropical semifield and universal semifield.

Let Trop$(u_1,\dots, u_\ell)$ be the free (multiplicative) abelian group generated by $u_1,\dots,u_\ell$. Then $\text{Trop}(u_1,\dots,u_{\ell})$ is a semifield with the addition defined by
\begin{align}
\prod_{j=1}^\ell u_j^{a_j} \oplus \prod_{j=1}^{\ell} u_j^{b_j}=\prod_{j=1}^{\ell} u_j^{\min(a_j,b_j)}.\nonumber
\end{align}
We call it the \emph{tropical semifield} of $u_1,\dots,u_\ell$.

Let $\mathbb Q_{\text{sf}}(u_1,\dots,u_{\ell})$ be the set of non-zero rational functions in $u_1,\dots,u_{\ell}$ which have subtraction-free expressions. Then $\mathbb Q_{\text{sf}}(u_1,\dots,u_{\ell})$ is a semifield with the usual multiplication and addition. We call it the \emph{universal semifield} of $u_1,\dots,u_{\ell}$.

Clearly, for any semifield $\PP$ and $p_1, \dots, p_{\ell}\in\PP$, there exists a unique semifield homomorphism $\pi: \mathbb Q_{\text{sf}}(u_1,\dots,u_{\ell})\rightarrow \mathbb P$ induced by $\pi:u_i\mapsto p_i$ for $i=1,\cdots,\ell$. The image $\pi(F(u_1, \dots, u_\ell))$ of $F(u_1, \dots, u_\ell)\in\mathbb Q_{\text{sf}}(u_1,\dots,u_{\ell})$ is called
the \emph{evaluation} of $F(y_1,\dots,y_\ell )$ at $(p_1,\cdots,p_\ell)$ and we denote
$$F|_{\PP}(p_1, \dots, p_{\ell}):=\pi(F(u_1, \dots, u_\ell)).$$

Let $B=(b_{ij})$ be an $n \times n$ integer matrix. We say that $B$ is {\em skew-symmetrizable} if there exists a positive integer diagonal matrix $S=\mathrm{diag}(s_1,\dots,s_n)$ such that $SB$ is skew-symmetric. Such an $S$ is called a \emph{skew-symmetrizer} of $B$.

A \emph{(labeled) $Y$-seed} of rank $n$ over a semifield $\PP$ is a pair $(\mathbf{y}, B)$, where
\begin{itemize}
\item $\mathbf{y}=(y_1, \dots, y_n)$ is an $n$-tuple of elements of $\mathbb{P}$. Elements in $\mathbf{ y}$ are called \emph{coefficients}.
\item $B=(b_{ij})$ is an $n \times n$ skew-symmetrizable integer matrix, which is called the {\em exchange matrix} of $(\mathbf{y}, B)$.
\end{itemize}

We fix a positive integer $n$ and a semifield $\PP$. Let $\mathbb{ZP}$ be the group ring of $\mathbb{P}$ as a multiplicative group. Since $\mathbb{ZP}$ is a domain (\cite{fzi}*{Section 5}), its total quotient ring is a field $\mathbb{Q}\mathbb P$. Let $\mathcal{F}$ be the field of the rational functions in $n$ indeterminates with coefficients in $\mathbb{Q}\mathbb P$.

A \emph{(labeled) seed} of rank $n$ over $\PP$ is a triplet $(\mathbf{x}, \mathbf{y}, B)$, where
\begin{itemize}
\item $\mathbf{x}=(x_1, \dots, x_n)$ is an $n$-tuple of elements of $\mathcal F$ such that they form a free generating set of $\mathcal F$. $\mathbf{x}$ is called a {\em cluster} and elements in $\mathbf{ x}$ are called \emph{cluster variables}.
\item $(\mathbf{y}, B)$ forms a $Y$-seed of rank $n$ over $\PP$, which is called the {\em underlying $Y$-seed} of $(\mathbf{x}, \mathbf{y}, B)$.
\end{itemize}

For an integer $b$, we denote $[b]_+=\max(b,0)$. Clearly, we have $b=[b]_+-[-b]_+$.

\begin{definition}[Matrix mutation] Let $A=(a_{ij})$ be an $m\times n$ integer matrix with $m\geq n>0$ and $k\in\{1,\cdots,n\}$. The {\em mutation} $\mu_k(A)$ of $A$ in direction $k$ is the new matrix $A^\prime=(a_{ij}^\prime)$ given by
\begin{eqnarray}
\label{eq:matrix-mutation}
a'_{ij}=\begin{cases}-a_{ij} ,&\text{if $i=k$ or $j=k$;} \\
a_{ij}+\left[ a_{ik}\right] _{+}a_{kj}+a_{ik}\left[ -a_{kj}\right]_+ ,&\text{otherwise.}\nonumber
\end{cases}
\end{eqnarray}
\end{definition}
It can be checked that if the upper $n\times n$ submatrix of $A_{m\times n}$ is skew-symmetrizable, then $$\mu_k(\mu_k(A))=A.$$

Let $(\mathbf{y}, B)$ be a $Y$-seed of rank $n$ over $\PP$ and $k \in\{1,\dots, n\}$. The \emph{$Y$-seed mutation} $\mu_k$ in direction $k$ transforms $(\mathbf{y}, B)$ to a new $Y$-seed $\mu_k(\mathbf{y}, B)=(\mathbf{y'}, B')$ defined as follows:
\begin{itemize}
\item $B^\prime$ is the matrix mutation of $B$ in direction $k$, that is, $B^\prime=\mu_k(B)$;
\item The coefficients in $\mathbf{y'}=(y'_1, \dots, y'_n)$ are given by
\begin{eqnarray}
\label{eq:y-mutation}
y'_j=
\begin{cases}
y_{k}^{-1}, &\text{if $j=k$;} \\
y_j y_k^{[b_{kj}]_+}(y_k \oplus 1)^{-b_{kj}} ,&\text{otherwise.}
\end{cases}\nonumber
\end{eqnarray}
\end{itemize}

Let $(\mathbf{x}, \mathbf{y}, B)$ be a seed of rank $n$ over $\PP$ and $k \in\{1,\dots, n\}$. The \emph{seed mutation} $\mu_k$ in direction $k$ transforms $(\mathbf{x}, \mathbf{y}, B)$ to a new seed $\mu_k(\mathbf{x}, \mathbf{y}, B)=(\mathbf{x'}, \mathbf{y'}, B')$ defined as follows:
\begin{itemize}
\item $(\mathbf{y'}, B')$ is obtained from $(\mathbf{y}, B)$ by $Y$-seed mutation in direction $k$.
\item The cluster variables in $\mathbf{x'}=(x'_1, \dots, x'_n)$ are given by
\begin{align} 
x'_j=\begin{cases}\dfrac{y_k\mathop{\prod}\limits_{i=1}^{n} x_i^{[b_{ik}]_+}+\mathop{\prod}\limits_{i=1}^{n} x_i^{[-b_{ik}]_+}}{(y_k\oplus 1)x_k}, &\text{if $j=k$;}\\
x_j, &\text{otherwise.}\nonumber
\end{cases}
\end{align}
\end{itemize}

It can be checked that the seed mutation $\mu_k$ is an involution, that is, $\mu_k^2({\bf x},{\bf y}, B)=({\bf x},{\bf y}, B)$. This in particular implies that the $Y$-seed mutation is also an involution.

Let $\mathbb{T}_n$ be the \emph{$n$-regular tree} whose edges are labeled by the numbers $1, \dots, n$ such that the $n$ edges emanating from each vertex have different labels. We write
\begin{xy}(0,1)*+{t}="A",(10,1)*+{t'}="B",\ar@{-}^k"A";"B" \end{xy}
to indicate that the two vertices $t,t'\in \mathbb{T}_n$ are joined by an edge labeled by $k$.

\begin{definition}[Patterns]
(i) A {\em $Y$-pattern}  $\Sigma^Y=\{t\mapsto \Sigma_t^Y\}_{t\in\mathbb T_n}$
of rank $n$ over $\PP$ is an assignment of a $Y$-seed $\Sigma_t^Y=( \mathbf{y}_t,B_t)$ of rank $n$ over $\PP$ to every vertex $t\in \mathbb{T}_n$ such that
for any edge \begin{xy}(0,1)*+{t}="A",(10,1)*+{t'}="B",\ar@{-}^k"A";"B" \end{xy} of $\mathbb T_n$, we have $\Sigma_{t^\prime}^Y=\mu_k(\Sigma_t^Y)$.

(ii) A \emph{cluster pattern} $\Sigma=\{t\mapsto \Sigma_t\}_{t\in\mathbb T_n}$ of rank $n$ over $\PP$ is an assignment of a seed $\Sigma_t=(\mathbf{x}_t, \mathbf{y}_t,B_t)$ of rank $n$ over $\PP$ to every vertex $t\in \mathbb{T}_n$ such that
for any edge \begin{xy}(0,1)*+{t}="A",(10,1)*+{t'}="B",\ar@{-}^k"A";"B" \end{xy} of $\mathbb T_n$, we have $\Sigma_{t^\prime}=\mu_k(\Sigma_t)$.
\end{definition}

Clearly each cluster pattern induces a $Y$-pattern by forgetting the cluster part of each seed. The resulting $Y$-pattern is called the {\em underlying $Y$-pattern} of this cluster pattern.

 For a seed $\Sigma_t=(\mathbf{x}_t, \mathbf{y}_t,B_t)$, we always denote by
$$
\mathbf{x}_t=(x_{1;t},\dots,x_{n;t}),\ \mathbf{y}_t=(y_{1;t},\dots,y_{n;t}),\ B_t=(b_{ij;t}).
$$

Sometimes we would like to choose a vertex $t_0$ as the \emph{rooted vertex} of $\mathbb T_n$. The seed at the rooted vertex $t_0$ would be called an \emph{initial seed}. In this case, we will also use $(\mathbf{ x},\mathbf{ y},B)$ to denote the initial seed $\Sigma_{t_0}$, where
\begin{align}
\mathbf{x}=\mathbf{x}_{t_0}=(x_1,\dots,x_n),\ \mathbf{y}=\mathbf{y}_{t_0}=(y_1,\dots,y_n),\ B=B_{t_0}=(b_{ij}).\nonumber
\end{align}

\begin{definition}[Cluster algebra and upper cluster algebra]
Let $\Sigma=\{t\mapsto \Sigma_t\}_{t\in\mathbb T_n}$ be a cluster pattern of rank $n$ over $\mathbb P$. 
\begin{itemize}
    \item [(i)] The \emph{cluster algebra} $\Acal$ associated with $\Sigma$ is the $\ZZ\PP$-subalgebra of $\Fcal$ given by
 $$\mathcal A=\mathbb{ZP}[x_{1;t},\cdots,x_{n;t}|\;t\in\mathbb T_n].$$
 \item[(ii)] The {\em upper cluster algebra} $\mathcal U$ associated with $\Sigma$ is the $\ZZ\PP$-subalgebra of $\Fcal$ given by the following intersections.
 $$\mathcal U=\bigcap\limits_{t\in\mathbb T_n}\mathbb{ZP}[x_{1;t}^{\pm 1},\cdots,x_{n;t}^{\pm 1}].$$
\end{itemize}
We call $n$ the \emph{rank} of the (upper) cluster algebra. 
 \end{definition}
We always use $\mathcal{A}(B,t_0)$ to denote a cluster algebra with initial exchange matrix $B$ at the rooted vertex $t_0\in\mathbb T_n$.

\begin{example}[Cluster algebra of Type $A_2$] \label{A2} Take $n=2$, then the $2$-regular tree $\TT_2$ is given as follows:
\begin{align}\label{A2tree}
\begin{xy}
(-10,0)*+{\dots}="a",(0,0)*+{t_0}="A",(10,0)*+{t_1}="B",(20,0)*+{t_2}="C", (30,0)*+{t_3}="D",(40,0)*+{t_4}="E",(50,0)*+{t_5}="F", (60,0)*+{\dots}="f"
\ar@{-}^{1}"a";"A"
\ar@{-}^{2}"A";"B"
\ar@{-}^{1}"B";"C"
\ar@{-}^{2}"C";"D"
\ar@{-}^{1}"D";"E"
\ar@{-}^{2}"E";"F"
\ar@{-}^{1}"F";"f"
\end{xy}.
\end{align}
We set the initial exchange matrix $B=\begin{bmatrix}
 0 & 1 \\
 -1 & 0
\end{bmatrix}
$ at the vertex $t_0$. In this case, we only have finitely many coefficients and cluster variables, which are given in Table \ref{A2seed} \cite{fziv}*{Example 2.10}.
\begin{table}[ht]
\begin{equation*}
\begin{array}{|c|cc|cc|}
\hline
&&&&\\[-4mm]
t& \hspace{25mm}\yy_t &&& \xx_t \hspace{30mm}\\
\hline
&&&&\\[-3mm]
0 &y_1 & y_2& x_1& x_2 \\[1mm]
\hline
&&&&\\[-3mm]
1& y_1(y_2\oplus 1)& \dfrac{1}{y_2} & x_1& \dfrac{x_1y_2+1}{(y_2\oplus 1)x_2} \\[3mm]
\hline
&&&&\\[-3mm]
2& \dfrac{1}{y_1(y_2\oplus 1)} & \dfrac{y_1y_2\oplus y_1\oplus 1}{y_2} & \dfrac{x_1y_1y_2 + y_1+ x_2}{(y_1y_2\oplus y_1\oplus 1)x_1x_2} & \dfrac{x_1y_2+1}{(y_2\oplus 1)x_2} \\[3mm]
\hline
&&&&\\[-3mm]
3& \dfrac{y_1\oplus1}{y_1y_2} & \dfrac{y_2}{y_1y_2\oplus y_1\oplus 1} & \dfrac{x_1y_1y_2+y_1+x_2}{(y_1y_2\oplus y_1\oplus 1)x_1x_2} & \dfrac{y_1+x_2}{x_1(y_1\oplus 1)} \\[3mm]
\hline
&&&&\\[-2mm]
4& \dfrac{y_1y_2}{y_1\oplus 1} &\dfrac{1}{y_1} & x_2 & \dfrac{y_1+x_2}{x_1(y_1\oplus 1)} \\[3mm]
\hline
&&&&\\[-2mm]
5& y_2 & y_1 & x_2 & x_1\\[1mm]
\hline
\end{array}
\end{equation*}
\caption{Coefficients and cluster variables in type~$A_2$: general coefficients\label{A2seed}}
\end{table}

Therefore, we have
\begin{align*}
\Acal(B,t_0)=\ZZ\PP\left[x_1,x_2,\dfrac{x_1y_2+1}{(y_2\oplus 1)x_2},\dfrac{x_1y_1y_2+y_1+x_2}{(y_1y_2\oplus y_1\oplus 1)x_1x_2}, \dfrac{y_1+x_2}{x_1(y_1\oplus 1)}\right].
\end{align*}
\end{example}

 Let $\Sigma_t=(\xx_t,\yy_t,B_t)$ be a seed and $\sigma$ a permutation on $\{1,\cdots,n\}$. We use $\sigma(\Sigma_t)$ to denote a new seed given as follows:
\begin{align*}
\sigma(\Sigma_t)&=(\sigma\xx_t,\sigma\yy_t,\sigma B_t),\\
    \sigma\xx_t&=(x_{\sigma(1);t},\dots,x_{\sigma(n);t}),\\
    \sigma\yy_t&=(y_{\sigma(1);t},\dots,y_{\sigma(n);t}),\\
    \sigma B_t&=(b'_{ij}),\quad b'_{ij}=b_{\sigma(i)\sigma(j);t}.
\end{align*}

Two seeds $\Sigma_t$ and $\Sigma_{t^\prime}$ are said to be \emph{equivalent} if there exists a permutation $\sigma$ such that $\Sigma_{t^\prime}=\sigma(\Sigma_t)$. Similarly, we can define the notation of the equivalence between $Y$-seeds.

\begin{remark}
Notice that the action of a permutation on a seed is compatible with mutations, that is, we have
\begin{align}
    \mu_{k}(\sigma(\Sigma_t))=\sigma\mu_{\sigma(k)}(\Sigma_t).\nonumber
\end{align}
In particular, the two cluster algebras given by two equivalent seeds have the same set of cluster variables.
\end{remark}

\begin{theorem} [\cite{cl2}*{Proposition 3 (ii)}] \label{non-labeled-cluster-thm}
Let $\Sigma_t$ and $\Sigma_{t^\prime}$ be two seeds of a cluster algebra $\mathcal A$. If there exists a permutation $\sigma$ such that $\xx_{t^\prime}=\sigma\xx_{t}$, then $\Sigma_{t^\prime}=\sigma(\Sigma_t)$.
\end{theorem}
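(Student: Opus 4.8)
The plan is to prove the sharper statement that, inside $\mathcal A$, the ordered cluster $\xx_t$ already determines the coefficient tuple $\yy_t$ and the exchange matrix $B_t$ of the seed containing it, and that this recovery is canonical, hence equivariant under a simultaneous relabelling of $\{1,\dots,n\}$. Granting this, the proposition follows at once: applying the recovery to the two equal ordered tuples $\xx_{t'}=\sigma\xx_t$ and using equivariance gives $(\yy_{t'},B_{t'})=(\sigma\yy_t,\sigma B_t)$, that is, $t'=\sigma(t)$. Thus it suffices to show: if $\xx_t=\xx_{t'}$ as ordered $n$-tuples, then $\yy_t=\yy_{t'}$ and $B_t=B_{t'}$.

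First I would treat a cluster algebra $\mathcal A_\bullet=\mathcal A(B,t_0)$ with \emph{principal coefficients} at $t_0$, i.e.\ $\PP=\mathrm{Trop}(y_1,\dots,y_n)$. On $\mathcal A_\bullet\subseteq\Fcal$ there is the $\ZZ^n$-grading with $\deg x_i=\ee_i$ and $\deg y_j=-\bb_j$, where $\bb_j$ is the $j$-th column of $B$; every cluster variable is homogeneous for it and the degree of $x_{i;t}$ is the $g$-vector $\gg_{i;t}$. Hence $\gg_{i;t}$ is determined by $x_{i;t}$ as an element of $\Fcal$, so $\xx_t=\xx_{t'}$ forces $G_t=G_{t'}$ for the $G$-matrices $G_t=(\gg_{1;t}\,|\,\cdots\,|\,\gg_{n;t})$. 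Now I invoke the tropical dualities between $C$- and $G$-matrices (Nakanishi--Zelevinsky), which rest on the sign-coherence of $c$-vectors: one has $C_t=(G_t^{\mathrm T})^{-1}$, the coefficients are the monomials $y_{j;t}=\prod_i y_i^{c_{ij;t}}$, and $B_t=G_t^{-1}B\,C_t$. Therefore $G_t=G_{t'}$ yields $C_t=C_{t'}$, $\yy_t=\yy_{t'}$ and $B_t=B_{t'}$; moreover all of these recoveries are visibly equivariant under relabelling, which validates the reduction above in this case.

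For an arbitrary semifield $\PP$ I would compare $\mathcal A$ with the principal-coefficient algebra $\mathcal A_\bullet=\mathcal A(B,t_0)$ attached to the same initial exchange matrix and the same tree $\TT_n$. The separation-of-additions formulas of Fomin--Zelevinsky express each $x_{i;t}$ and each $y_{j;t}$ over $\PP$ as the evaluation at the initial data of universal subtraction-free expressions determined by the $F$-polynomial, the $g$-vector and the $C$-matrix at $t$, equivalently by the seed $\Sigma_t$ of $\mathcal A_\bullet$. One checks that the element $x_{i;t}\in\Fcal$ remembers the corresponding cluster variable of $\mathcal A_\bullet$, so that $\xx_t=\xx_{t'}$ forces the associated clusters of $\mathcal A_\bullet$ to coincide; by the previous step this gives $B_t=B_{t'}$ (in any case $B_t$ is coefficient-free, being obtained from $B$ by matrix mutation along the path $t_0\to t$) together with equality of the $C$-matrices and $F$-polynomials at $t$ and $t'$, whence $\yy_t=\yy_{t'}$ via the separation formula for the $y$-variables.

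I expect the main obstacle to be the very first reduction: a bare cluster $\xx_t$ does not come labelled with its $g$-vectors, nor does one know in advance which cluster variables of $\mathcal A$ are exchangeable with its members, so the exchange relations attached to $\xx_t$ are not directly available to read off $B_t$ or $\yy_t$. The resolution is the canonical extraction of $g$-vectors from the $\ZZ^n$-grading on the principal-coefficient algebra, after which tropical duality does the rest; this is also why the argument genuinely uses sign-coherence. A secondary, more routine point is the passage from a general semifield to principal coefficients via separation of additions, in particular the claim that $x_{i;t}$, as an element of $\Fcal$, determines the principal-coefficient cluster variable lying above it.
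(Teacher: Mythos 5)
The paper itself gives no proof of this statement (it is quoted from \cite{cl2}*{Proposition 3 (ii)}), so your argument has to stand on its own. Its principal-coefficient core is sound and is the natural route: homogeneity (Theorem \ref{thmfziv}) recovers the $G$-matrix from the ordered cluster, and tropical duality then recovers $C_t$, hence $\yy_t$, and finally $B_t$. Three points there need tightening but are repairable. First, in the skew-symmetrizable case the duality is $SC_tS^{-1}G_t^{\mathrm T}=I_n$ (Theorem \ref{thmcg}), not $C_t=(G_t^{\mathrm T})^{-1}$; second, when you push the permutation $\sigma$ through this identity you tacitly need $s_{\sigma(i)}=s_i$ for the fixed skew-symmetrizer $S$, which is true but requires a short extra argument (e.g.\ from $G_{t'}=G_t P_\sigma$ one gets $\cc_{i;t'}=(s_i/s_{\sigma(i)})\,\cc_{\sigma(i);t}$, and both columns are primitive integer vectors since $\det C=\pm1$); third, the identity $B_t=G_t^{-1}BC_t$ is correct but is not among the dualities stated in the paper, so it should be cited or derived, e.g.\ from $\hat y_{j;t}=\prod_i\hat y_i^{\,c_{ij;t}}$ together with the $\ZZ^n$-grading.

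The genuine gap is the reduction from a general semifield to principal coefficients. You say that ``one checks'' that $x_{i;t}\in\Fcal$ remembers the principal-coefficient cluster variable lying above it, and you call this routine. It is not: separation of additions only gives the implication from equality of principal-coefficient data to equality over $\PP$, and after the $y$'s are specialized into an arbitrary semifield the $\ZZ^n$-grading, hence the $g$-vector, is no longer visible, so nothing formal prevents two distinct principal-coefficient cluster variables from becoming equal after specialization (with trivial coefficients, for instance, no grading argument is available). Ruling this out is precisely the coefficient-independence statement, Proposition \ref{proindepen} (that is, \cite{cl2}*{Proposition 3 (i)}), which is a substantive result whose known proofs rely on sign-coherence/$g$-vector technology; it is the companion part of the very proposition you are proving. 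If you are allowed to cite it, your outline does complete: equality over principal coefficients yields $F_{i;t'}=F_{\sigma(i);t}$, $C_{t'}$ and $B_{t'}$ as you describe, and the separation formula for the $y$-variables then returns $\yy_{t'}=\sigma\yy_t$ over $\PP$. As written, however, the hardest ingredient of the theorem in the general-coefficient case is being assumed rather than proved.
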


\begin{proposition}[\cite{cl2}*{Prposition 3 (i)}] \label{proindepen}
Let $\mathcal A_1$ be a cluster algebra with coefficient semifield $\mathbb P_1$ and $\mathcal A_2$ a cluster algebra with coefficient semifield $\mathbb P_2$. Denote by $(\xx_t(k),\yy_t(k), B_t(k))$ the seed of $\Acal_k$ at $t\in\TT_n$, $k= 1,2$. If $\Acal_1$ and $\Acal_2$ have the same initial exchange matrix at the rooted vertex $t_0$, then $x_{i;t}(1) =x_{j;t'}(1)$ if and only if $x_{i;t}(2) =x_{j;t'}(2)$, where $t, t'\in\TT_n$ and $i, j\in \{1,2,\cdots, n\}$.
\end{proposition}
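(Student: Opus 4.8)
The plan is to invoke the \emph{separation of additions} formula \cite{fziv}*{Corollary 6.3} to recast the equality of two cluster variables as the equality of two pieces of data attached to the initial exchange matrix alone. Recall that in a cluster algebra $\Acal$ with initial seed $(\xx,\yy,B)$ at $t_0$ over a semifield $\PP$ one has, for every $t\in\TT_n$ and $l\in\{1,\dots,n\}$,
\[
x_{l;t}=\Bigl(\prod_{i=1}^{n}x_i^{\,g^{l;t}_i}\Bigr)\,\frac{F_{l;t}|_{\Fcal}(\hat y_1,\dots,\hat y_n)}{F_{l;t}|_{\PP}(y_1,\dots,y_n)},\qquad \hat y_j:=y_j\prod_{i=1}^{n}x_i^{b_{ij}},
\]
where the $g$-vector $\mathbf g^{l;t}=(g^{l;t}_1,\dots,g^{l;t}_n)\in\ZZ^n$ and the $F$-polynomial $F_{l;t}\in\ZZ_{\ge0}[u_1,\dots,u_n]$ are computed from $B$ and $t$ only (independently of $\PP$), and $F_{l;t}$ has constant term $1$; here $F_{l;t}|_{\Fcal}$ denotes evaluation in $\Fcal$ with ordinary arithmetic and $F_{l;t}|_{\PP}$ evaluation in the semifield $\PP$. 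From this I would extract the following coefficient-free criterion, valid in \emph{any} cluster algebra with initial exchange matrix $B$ at $t_0$: one has $x_{i;t}=x_{j;t'}$ if and only if $\mathbf g^{i;t}=\mathbf g^{j;t'}$ and $F_{i;t}=F_{j;t'}$. Since the right-hand condition depends only on $B,t_0,t,t',i,j$, applying this criterion inside $\Acal_1$ and inside $\Acal_2$ (which share the initial exchange matrix) immediately gives the proposition.

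The implication ``$\Leftarrow$'' of the criterion is immediate: if the two $g$-vectors and the two $F$-polynomials agree, then the displayed formula writes $x_{i;t}$ and $x_{j;t'}$ as the very same expression in the common initial data $(\xx,\yy,B)$. For ``$\Rightarrow$'', I would substitute the formula into $x_{i;t}=x_{j;t'}$ and clear the scalars $F_{i;t}|_{\PP}(y),F_{j;t'}|_{\PP}(y)$, which lie in the multiplicative group $\PP$ and hence are units of $\ZZ\PP$. Writing $F_{l;t}=\sum_{\mathbf m}c^{l;t}_{\mathbf m}u^{\mathbf m}$ with $c^{l;t}_{\mathbf m}\in\ZZ_{\ge0}$, $c^{l;t}_{\mathbf 0}=1$, and hence $F_{l;t}|_{\Fcal}(\hat y)=\sum_{\mathbf m}c^{l;t}_{\mathbf m}\,y^{\mathbf m}\prod_{i}x_i^{(B\mathbf m)_i}$, one obtains an identity of Laurent polynomials in $x_1,\dots,x_n$ over $\ZZ\PP$ in which no cancellation occurs, because all the coefficients $c^{l;t}_{\mathbf m}$ are positive. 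Comparing the supports in $x_1,\dots,x_n$ gives $\mathbf g^{i;t}+B\cdot\operatorname{supp}(F_{i;t})=\mathbf g^{j;t'}+B\cdot\operatorname{supp}(F_{j;t'})$; since $\mathbf 0$ lies in both supports this yields $\mathbf g^{i;t}\in\mathbf g^{j;t'}+B\cdot\operatorname{supp}(F_{j;t'})$ and symmetrically. The aim is then to upgrade this to $\mathbf g^{i;t}=\mathbf g^{j;t'}$, after which $F_{i;t}=F_{j;t'}$ follows by setting $x_1=\dots=x_n=1$.

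The step I expect to be the main obstacle is precisely this upgrade: when $B$ is singular the exponents $B\mathbf m$ can collide, so the $x$-support of a cluster variable no longer singles out its $g$-vector, and the set equality above is not by itself enough. The model situation to keep in mind is the \emph{universal} coefficient semifield $\PP=\QQsf(y_1,\dots,y_n)$ with initial coefficients equal to the free generators $y_1,\dots,y_n$: there $F_{l;t}|_{\PP}(y)=F_{l;t}(y_1,\dots,y_n)$, the identity above lives in $\QQ(x_1,\dots,x_n,y_1,\dots,y_n)$, where the monomials $x^{\mathbf a}y^{\mathbf m}$ are linearly independent, and extracting the component of $y$-degree $\mathbf 0$ reads off $\prod_i x_i^{g^{i;t}_i}=\prod_i x_i^{g^{j;t'}_i}$, hence $\mathbf g^{i;t}=\mathbf g^{j;t'}$, with $F_{i;t}=F_{j;t'}$ then dropping out from the fact that $y_j\mapsto\hat y_j$ is a $\QQ(x_1,\dots,x_n)$-algebra automorphism of that field. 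For a general $\PP$ the extra $y$-grading is invisible, and I would close the gap by supplementing the support argument with the standard fact that distinct cluster variables of the principal-coefficient cluster algebra $\Acal^{\mathrm{prin}}(B;t_0)$ have distinct $g$-vectors (equivalently, the $g$-vector there determines the cluster variable), which together with the separation formula makes the universal-coefficient computation apply. Apart from isolating and handling this degeneracy, the remaining work is routine bookkeeping with the separation formula.
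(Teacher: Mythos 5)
The paper does not actually prove this proposition; it is quoted from Cao--Li \cite{cl2}, so your attempt has to stand on its own. Your overall frame -- reduce everything to the claim that, over \emph{any} coefficient semifield, the equality $x_{i;t}=x_{j;t'}$ forces $\mathbf g^{i;t}=\mathbf g^{j;t'}$ and $F_{i;t}=F_{j;t'}$, and then transfer between $\Acal_1$ and $\Acal_2$ via the separation formula -- is the right one, and the ``$\Leftarrow$'' half plus the nondegenerate case of ``$\Rightarrow$'' (where $B$ is nonsingular, so the support comparison gives $\mathbf g^{i;t}-\mathbf g^{j;t'}=B\mathbf m_0=-B\mathbf m_0'$ with $\mathbf m_0,\mathbf m_0'\ge 0$, hence $\mathbf g^{i;t}=\mathbf g^{j;t'}$, and injectivity of $g$-vectors on principal-coefficient cluster variables finishes) can be made to work.

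However, the step you yourself flag as the main obstacle is not closed by your patch, and this is a genuine gap. When $B$ is singular (the hypothesis allows $\PP$ to be, say, the trivial semifield, where the $y$-grading is completely invisible), the support identity only gives $\mathbf g^{i;t}\in\mathbf g^{j;t'}+B\,\operatorname{supp}(F_{j;t'})$ and its mirror, which does not pin down the $g$-vector: you cannot a priori exclude $\mathbf g^{i;t}=\mathbf g^{j;t'}+B\mathbf m$ for some nonzero $\mathbf m\ge 0$ in the kernel direction. Your proposed fix is circular: the universal-coefficient computation applies only if you already know the equality holds over $\QQsf(y_1,\dots,y_n)$, but the hypothesis is an equality over an arbitrary (possibly much smaller) $\PP$, and upgrading from small to universal coefficients is precisely the statement being proved; likewise, the fact that distinct cluster variables of $\Acal^{\mathrm{prin}}(B;t_0)$ have distinct $g$-vectors only helps \emph{after} $\mathbf g^{i;t}=\mathbf g^{j;t'}$ has been established, not before. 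A smaller but real slip in the same part: ``$F_{i;t}=F_{j;t'}$ follows by setting $x_1=\dots=x_n=1$'' is false over a general $\PP$ (for trivial coefficients it only yields $F_{i;t}(1,\dots,1)=F_{j;t'}(1,\dots,1)$); once $g$-vector equality is known this is repaired by the principal-coefficient injectivity you quote, but it cannot be obtained by specialization alone. The known proofs of this coefficient-independence statement need more substantial input at exactly this point (positivity of Laurent expansions together with linear independence of cluster monomials or proper-Laurent-monomial arguments, in addition to sign-coherence), so the missing step is not routine bookkeeping, and as written your argument does not prove the proposition for singular $B$.
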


\begin{definition}[Exchange graph]
The \emph{exchange graph} $\Gamma$ of a cluster algebra $\Acal$ is the graph whose vertices correspond to the seeds of $\mathcal A$ up to seed equivalence and whose edges correspond to the seed mutations.
\end{definition}
By Proposition \ref{proindepen} and Theorem \ref{non-labeled-cluster-thm}, we know that the exchange graph $\Gamma$ of a cluster algebra $\mathcal A$ is independent of the choice of the coefficient semifield. It only depends on the initial exchange matrix.

\begin{theorem}[\cite{cl2}*{Theorem 10}] \label{thmgraph}
The seeds of a cluster algebra $\Acal$ whose clusters contain particular cluster variables form a connected subgraph of the
exchange graph of $\Acal$.
\end{theorem}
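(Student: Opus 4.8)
The plan is to translate the combinatorial statement about seeds into a geometric statement about the $g$-vector fan, and then reduce the assertion to a local connectivity statement about the star of one face. Let $U$ be the fixed set of cluster variables, let $\Sigma(U)$ be the set of seeds (up to equivalence) whose cluster contains $U$, and let $\Gamma(U)$ be the induced subgraph of the exchange graph $\Gamma$. If $\Sigma(U)$ is empty or a single vertex there is nothing to prove, so assume it has at least two vertices. By Proposition \ref{proindepen} the exchange graph, hence $\Gamma(U)$, is independent of the coefficient semifield, so I would compute with principal coefficients, for which the $g$-vectors are sign-coherent (Theorem \ref{thm:signs-ci}, \cite{GHKK}). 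For a cluster variable $x$ write $\mathbf g(x)\in\ZZ^n$ for its $g$-vector.

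By sign-coherence the $g$-vectors organize into a simplicial fan $\mathcal G$: each seed $t$ spans a full-dimensional cone $C_t=\cone(\mathbf g(x_{1;t}),\dots,\mathbf g(x_{n;t}))$, distinct seeds give cones with disjoint interiors, and two seeds differ by a single mutation exactly when $C_t$ and $C_{t'}$ meet along a common facet. Theorem \ref{non-labeled-cluster-thm} is what guarantees that a seed is recovered from its cluster, so $t\mapsto C_t$ is injective and the dual graph of $\mathcal G$ is precisely $\Gamma$. Since $U$ lies in a common cluster, $F:=\cone(\mathbf g(z):z\in U)$ is a face of $\mathcal G$, and a seed $t$ lies in $\Sigma(U)$ if and only if $F$ is a face of $C_t$, i.e. $C_t\in\st(F)$; here the equivalence again uses that $g$-vectors determine cluster variables. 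Moreover a mutation keeps $U$ in the cluster precisely when it is performed at a position outside $U$, which is exactly the condition that the shared facet of $C_t$ and $C_{t'}$ still contains $F$. Thus $\Gamma(U)$ is canonically identified with the dual graph of the star $\st(F)$.

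It then remains to prove that $\st(F)$ is connected in codimension one, equivalently that the link $\lk(F)$ — a simplicial fan in $\RR^n/\mathrm{span}(F)\cong\RR^{n-|U|}$ — has a connected dual graph. This is the heart of the matter and the step I expect to be the main obstacle: one must show that any two maximal cones of $\mathcal G$ through $F$ can be joined by a chain of maximal cones through $F$ whose consecutive terms share a facet containing $F$. I would deduce it from the codimension-one connectivity of the whole fan $\mathcal G$ (which is just the connectivity of $\Gamma$ by mutation) localized at $F$: choosing a point $p$ in the relative interior of $F$ and a generic affine slice transverse to $\mathrm{span}(F)$ near $p$, the cones of $\st(F)$ cut out the maximal cones of $\lk(F)$, and facet-adjacency upstairs restricts to facet-adjacency in the slice. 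The genuine work is to verify that the support of $\lk(F)$ does not break into pieces meeting $F$ only along lower-dimensional faces; for this I would use sign-coherence together with the tropical duality between $C$- and $G$-matrices \cite{nz} to control exactly which walls of $\mathcal G$ contain $F$.

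Once $\lk(F)$ is shown to be connected in codimension one, its dual graph is connected, and by the identification of the previous paragraph so is $\Gamma(U)$, which is the assertion. The expected difficulty is entirely concentrated in the last step: establishing the local codimension-one connectivity of the cluster fan around $F$. I emphasize that the global connectivity of $\Gamma$ does not formally suffice for this, since a priori the cones through $F$ could be linked only through cones not containing $F$; it is precisely the sign-coherence and tropical-duality input that should rule this out.
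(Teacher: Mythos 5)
The theorem you are proving is not proved in this paper at all: it is imported from \cite{cl2}*{Theorem 10}, so your argument has to stand on its own, and it does not. Your translation into the $g$-vector fan is fine in outline (granting that the $g$-vector cones really form a simplicial fan in which mutation is facet-crossing, which needs the full scattering-diagram results of \cite{GHKK}, not sign-coherence alone), but after that translation the assertion ``$\Gamma(U)$ is connected'' \emph{is} the assertion ``the dual graph of $\st(F)$ is connected,'' and that is exactly where you stop. You correctly observe that global connectivity of the exchange graph does not imply this local statement, since two maximal cones containing $F$ could a priori be joined only through cones not containing $F$; this worry is genuine, and it cannot be dismissed by a slicing argument because in infinite type the support of the $g$-vector fan is a proper subset of $\RR^n$, so the link of $F$ need not be a complete fan. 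What you offer at the decisive moment --- ``use sign-coherence together with the tropical duality to control exactly which walls of $\mathcal G$ contain $F$'' --- is a plan, not an argument: no property of those walls is ever stated or used. The proposal is therefore a faithful geometric reformulation of the theorem, not a proof of it.

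The tool that actually closes this gap in \cite{cl2} is the enough $g$-pairs property (Theorem \ref{thmgpair} in this paper, Theorems 8--9 of \cite{cl2}), proved there by a delicate induction using sign-coherence and the tropical duality of \cite{nz}; their Theorem 10 (your statement) is then deduced from it. Concretely, for a fixed $U$ the $g$-pair construction attaches to \emph{every} cluster $[\xx_t]$ a canonical companion cluster $[\xx_{t'}]\supseteq U$, uniquely determined, and this assignment is compatible with mutation; it behaves as a retraction of the exchange graph onto $\Gamma(U)$, so connectivity of the whole graph transfers to the subgraph. In fan language, that retraction is precisely the certificate of codimension-one connectivity of $\st(F)$ that your proposal is missing. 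Note also that within the present paper you cannot borrow Theorem \ref{thmquiver} (the identification of $\overrightarrow{\Gamma}_U(B,t_0)$ with an exchange quiver, i.e.\ the oriented version of your star identification) to finish your argument: its proof invokes Theorem \ref{thmgraph} itself, so that route would be circular.
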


\subsection{$c$-vectors and $g$-vectors in cluster algebras} In this subsection, we recall the $c$-vectors and $g$-vectors in cluster algebras.

If the coefficient semifield of a cluster algebra $\mathcal A$ is a tropical semifield, then we say that $\mathcal A$ is a  {\em geometric cluster algebra}.

\begin{definition}[Coefficient matrix and extended exchange matrix]
Let $\Acal$ be a geometric cluster algebra whose coefficient semifield is ${\rm Trop}(u_1,\cdots,u_m)$. We know that each coefficient variable $y_{k;t}$ of $\Acal$ can be written as a Laurent monomial in $u_1,\cdots,u_m$, say $y_{k;t}=u_1^{c_{1k}^t}\cdots u_m^{c_{mk}^t}$. We call
$C_t:=(c_{ij}^t)_{m\times n}$ the \emph{coefficient matrix} at $t$ and  $\tilde B_t=\begin{bmatrix}B_t\\ C_t\end{bmatrix}$ the {\em extended exchange matrix} at $t$.
\end{definition}

\begin{proposition}\label{procomat}
Let $\mathcal A$ be a geometric cluster algebra and $\Sigma_{t_1}, \Sigma_{t_2}$ two seeds of $\mathcal A$. Let $C_{t_1}=({\bf c}_1,\cdots,{\bf c}_n)$ and $C_{t_2}$ be the coefficient matrices of $\Sigma_{t_1}$ and $\Sigma_{t_2}$, respectively. If there exists a permutation $\sigma$ such that $\Sigma_{t_2}=\sigma(\Sigma_{t_1})$, then $C_{t_2}=({\bf c}_{\sigma(1)},\cdots,{\bf c}_{\sigma(n)})$.
\end{proposition}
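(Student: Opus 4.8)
The plan is to unwind the definition of the coefficient matrix, which records only the exponent vectors of the coefficients $y_{1;t},\dots,y_{n;t}$, together with the definition of the permutation action on a seed, which simply permutes those coefficients.

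First I would record the following reformulation of the definition. Since the coefficient semifield is ${\rm Trop}(u_1,\dots,u_m)$, the free multiplicative abelian group on $u_1,\dots,u_m$, each coefficient $y_{k;t}$ of $\mathcal A$ admits a \emph{unique} expression as a Laurent monomial $\prod_{i=1}^m u_i^{c_{ik}^t}$; equivalently, the $k$-th column of $C_t$ is the unique vector in $\ZZ^m$ whose entries are the exponents of $y_{k;t}$ in $u_1,\dots,u_m$. In particular, $\mathbf c_j$ is the exponent vector of $y_{j;t_1}$ for each $j\in\{1,\dots,n\}$.

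Next I would invoke the hypothesis $t_2=\sigma(t_1)$. By the definition of $\sigma(t_1)=(\sigma\xx_{t_1},\sigma\yy_{t_1},\sigma B_{t_1})$ we have $\yy_{t_2}=\sigma\yy_{t_1}=(y_{\sigma(1);t_1},\dots,y_{\sigma(n);t_1})$, so $y_{k;t_2}=y_{\sigma(k);t_1}$ for every $k\in\{1,\dots,n\}$. Reading off exponent vectors in $u_1,\dots,u_m$ and using the uniqueness from the previous paragraph, the $k$-th column of $C_{t_2}$ equals the exponent vector of $y_{\sigma(k);t_1}$, namely $\mathbf c_{\sigma(k)}$. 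Hence $C_{t_2}=(\mathbf c_{\sigma(1)},\dots,\mathbf c_{\sigma(n)})$, as claimed.

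This argument has essentially no obstacle; the only point requiring (mild) care is that the coefficient matrix is well-defined, i.e.\ that the exponent vectors in question are unique, which is precisely the freeness of ${\rm Trop}(u_1,\dots,u_m)$ as an abelian group. (Alternatively, one could argue by induction on the distance from $t_0$ in $\TT_n$, using that $C_t$ is obtained from $C_{t_0}=I_n$ by iterated matrix mutation and checking that permuting a seed permutes the columns of the bottom block of the extended exchange matrix; but the direct argument above via the coefficients is shorter and avoids bookkeeping with $\mu_k$.)
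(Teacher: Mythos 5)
Your proof is correct and follows essentially the same route as the paper: the paper also deduces $\yy_{t_2}=\sigma\yy_{t_1}$ from $t_2=\sigma(t_1)$ and then reads off the columns of $C_{t_2}$ from the definition of the coefficient matrix. Your extra remark on uniqueness of the exponent vectors (freeness of ${\rm Trop}(u_1,\dots,u_m)$) is a harmless elaboration of what the paper leaves implicit.
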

\begin{proof}
By $\Sigma_{t_2}=\sigma(\Sigma_{t_1})$, we have ${\bf y}_{t_2}=\sigma{\bf y}_{t_1}$. Then by the definition of coefficient matrix, we have $C_{t_2}=({\bf c}_{\sigma(1)},\cdots,{\bf c}_{\sigma(n)})$.
\end{proof}

\begin{definition}[Extended cluster and frozen variables]
Let $\Acal$ be a geometric cluster algebra whose coefficient semifield is ${\rm Trop}(u_1,\cdots,u_m)$. For a seed $\Sigma_t=({\bf x}_t,{\bf y}_t,B_t)$ of $\mathcal A$, we usually use $(\tilde {\bf x}_t, \tilde B_t)$ to denote the seed $\Sigma_t$, where $\tilde B_t$ is the extended exchange matrix of $\Sigma_t$ and $$\tilde {\bf x}_t=(x_{1;t},\cdots,x_{n;t};x_{n+1;t},\cdots,x_{n+m;t})$$ with $x_{n+i;t}=u_i$ for $i=1,\cdots,m$. We call $\tilde {\bf x}_t$ an {\em extended cluster} and $x_{n+1;t},\cdots,x_{n+m;t}$ the {\em frozen variables}.
\end{definition}

\begin{proposition}[\cite{fzi}*{Proposition 5.8}] Let $\mathcal A$ be a geometric cluster algebra. Then for any edge $t^{~\underline{\quad k \quad}}~ t^{\prime}$ in $\mathbb T_n$, we have $\tilde B_{t^\prime}=\mu_k(\tilde B_t)$, where $\tilde B_t$ and $\tilde B_{t^\prime}$ are the extended exchange matrices at $t$ and $t^\prime$, respectively.
\end{proposition}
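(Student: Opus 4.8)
The plan is to unwind the definition of the coefficient matrix and reduce the claim to a direct comparison with the entrywise matrix mutation rule \eqref{eq:matrix-mutation}. First I would note that $B_{t'}=\mu_k(B_t)$ holds by the definition of seed mutation, and that the top $n\times n$ block of $\mu_k(\tilde B_t)$ is precisely $\mu_k(B_t)$: in \eqref{eq:matrix-mutation} the entry in position $(i,j)$ with $i,j\le n$ involves only $a_{ij}$, $a_{ik}$ and $a_{kj}$, all of which lie in the top $n\times n$ block since $k\le n$. Hence the top blocks of $\tilde B_{t'}$ and of $\mu_k(\tilde B_t)$ agree, and it remains to show that the coefficient matrix $C_{t'}$ equals the bottom $m\times n$ block of $\mu_k(\tilde B_t)$.

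Next I would compute $\yy_{t'}$ from \eqref{eq:y-mutation} inside the tropical semifield $\mathrm{Trop}(u_1,\dots,u_m)$. The only ingredient needed is the elementary identity $1\oplus\prod_l u_l^{a_l}=\prod_l u_l^{\min(a_l,0)}=\prod_l u_l^{-[-a_l]_+}$, which applied to $y_{k;t}=\prod_l u_l^{c_{lk}^t}$ gives $y_{k;t}\oplus1=\prod_l u_l^{-[-c_{lk}^t]_+}$. Substituting the monomial expressions of $y_{j;t}$, $y_{k;t}$ and $y_{k;t}\oplus1$ into \eqref{eq:y-mutation} and reading off exponents, one gets $c_{lk}^{t'}=-c_{lk}^t$ from $y_{k;t}'=y_{k;t}^{-1}$, and
\[
c_{lj}^{t'}=c_{lj}^t+c_{lk}^t\,[b_{kj;t}]_++b_{kj;t}\,[-c_{lk}^t]_+
\]
for $j\ne k$ and $l=1,\dots,m$.

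Finally I would compare this with the $(n+l,j)$-entry of $\mu_k(\tilde B_t)$. Since $n+l>n\ge k$, the index $n+l$ is never equal to $k$, so \eqref{eq:matrix-mutation} gives $-c_{lk}^t$ in column $k$ (matching the above) and, for $j\ne k$,
\[
c_{lj}^t+[c_{lk}^t]_+\,b_{kj;t}+c_{lk}^t\,[-b_{kj;t}]_+.
\]
Using $[x]_+-[-x]_+=x$ twice, both this expression and the one obtained from \eqref{eq:y-mutation} simplify to
\[
c_{lj}^t+c_{lk}^t\,b_{kj;t}+c_{lk}^t\,[-b_{kj;t}]_++[-c_{lk}^t]_+\,b_{kj;t},
\]
so they coincide. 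Together with the agreement of the top blocks this gives $\tilde B_{t'}=\mu_k(\tilde B_t)$. There is no real obstacle in this argument; the only step needing care is the bookkeeping with the brackets $[\,\cdot\,]_+$ and tropical addition, and in particular recognizing that the two superficially different forms of the off-diagonal rule --- the one produced by \eqref{eq:y-mutation} and the one produced by \eqref{eq:matrix-mutation} --- are the same.
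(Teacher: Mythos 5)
Your computation is correct: the top block check, the tropical identity $y_{k;t}\oplus 1=\prod_l u_l^{-[-c_{lk}^t]_+}$, and the bracket identity showing $c_{lk}^t[b_{kj;t}]_++b_{kj;t}[-c_{lk}^t]_+=[c_{lk}^t]_+b_{kj;t}+c_{lk}^t[-b_{kj;t}]_+$ all hold, so the exponent bookkeeping goes through without needing sign-coherence. The paper itself offers no proof, simply citing Proposition 5.8 of Fomin--Zelevinsky's foundational paper, and your argument is exactly the standard verification given there.
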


Let $\mathcal A$ be a geometric cluster algebra whose coefficient semifield is  $\trop(u_1,\cdots,u_m)$. We say that $\mathcal A$ is a cluster algebra with \emph{principal coefficients} at seed $\Sigma_{t_0}=(\mathbf{ x},\mathbf{ y}, B)$, if the coefficient matrix $C_{t_0}$ of $\Sigma_{t_0}$ is the identity matrix $I_n$, that is, $m=n$ and $\mathbf{y}=(y_1,\dots,y_n)=(u_1,\cdots,u_n)$ hold.

\begin{definition}[$C$-matrix and $c$-vectors] Let $\Acal(B, t_0)$ be a cluster algebra with
principal coefficients at seed $\Sigma_{t_0}$. Then the coefficient matrix $C_t=C_t^{B;t_0}$ at $t\in\mathbb T_n$ is called a {\em $C$-matrix} of $B$ and its columns are called {\em $c$-vectors} of $B$.
\end{definition}

\begin{theorem}[\cite{fziv}*{Proposition 6.1}]\label{thmfziv}
Let $\Acal(B,t_0)$ be a cluster algebra with principal coefficients at seed $\Sigma_{t_0}$. Then each cluster variable $x_{k;t}$ is a homogeneous Laurent polynomial in $x_1,\dots,x_n,y_1,\dots,y_n$ under the following $\ZZ^n$-grading:
\begin{align}\label{grading}
\deg x_{i}=\ee_i,\quad \deg y_{i}=-\mathbf{b}_i,
\end{align}
where $\ee_i$ is the $i$th canonical basis of $\ZZ^n$ and $\mathbf{b}_i$ is the $i$th column vector of $B$.
\end{theorem}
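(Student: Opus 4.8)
The plan is to induct on the distance $d(t_0,t)$ between $t$ and the rooted vertex $t_0$ in $\TT_n$. The base case $t=t_0$ is immediate, since the cluster variables there are $x_1,\dots,x_n$ and $x_i$ is homogeneous of degree $\ee_i$. To run the induction I would carry along the auxiliary elements
\[
\hat y_{j;t}\ :=\ y_{j;t}\prod_{i=1}^{n} x_{i;t}^{\,b_{ij;t}}\ \in\ \Fcal
\]
and prove \emph{simultaneously} that each $\hat y_{j;t}$ is homogeneous of degree $0$ with respect to the grading \eqref{grading} (homogeneity of a nonzero element of $\Fcal$ meaning that it is a ratio of homogeneous Laurent polynomials of the appropriate degree difference). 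At $t_0$ this holds because $\deg \hat y_{j}=-\bb_j+\sum_{i}b_{ij}\ee_i=-\bb_j+\bb_j=0$.

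For the inductive step I would consider an edge of $\TT_n$ joining $t$ and $t'$ with label $k$, where $d(t_0,t)<d(t_0,t')$, and assume the statement at $t$. Since $x_{j;t'}=x_{j;t}$ for $j\neq k$, only $x_{k;t'}$ needs attention. The key manipulation is to rewrite the exchange relation \eqref{eq:x-mutation}, using $y_{k;t}\prod_i x_{i;t}^{[b_{ik;t}]_+}=\hat y_{k;t}\prod_i x_{i;t}^{[-b_{ik;t}]_+}$, in the form
\[
x_{k;t'}\,x_{k;t}\ =\ \frac{\bigl(1+\hat y_{k;t}\bigr)\prod_{i=1}^{n}x_{i;t}^{[-b_{ik;t}]_+}}{\,y_{k;t}\oplus 1\,}\,.
\]
On the right-hand side $y_{k;t}\oplus 1$ is a single Laurent monomial in $y_1,\dots,y_n$, hence homogeneous; $\prod_i x_{i;t}^{[-b_{ik;t}]_+}$ is homogeneous by the inductive hypothesis; and $1+\hat y_{k;t}$ is homogeneous of degree $0$ because $\hat y_{k;t}$ is. So the right-hand side is homogeneous, and since $x_{k;t}$ is homogeneous so is $x_{k;t'}$, its degree being a well-defined vector in $\ZZ^n$ (namely the $g$-vector of $x_{k;t'}$), and it is a Laurent polynomial by the Laurent phenomenon. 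To close the step I would then check that each $\hat y_{j;t'}$ is again homogeneous of degree $0$: for $j=k$ the matrix-mutation rule \eqref{eq:matrix-mutation} gives directly $\hat y_{k;t'}=\hat y_{k;t}^{-1}$, of degree $0$; for $j\neq k$ one verifies $\hat y_{j;t'}=\hat y_{j;t}\,\hat y_{k;t}^{[b_{kj;t}]_+}\bigl(1+\hat y_{k;t}\bigr)^{-b_{kj;t}}$, again a product of degree-$0$ elements.

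The step carrying the real content is this last identity: that the elements $\hat y_{j;t}$ transform under mutation exactly by the coefficient rule \eqref{eq:y-mutation}, read inside $\Fcal$. This is a direct but somewhat lengthy computation combining \eqref{eq:x-mutation}, \eqref{eq:matrix-mutation} and \eqref{eq:y-mutation}; it is the ``$\hat y$-dynamics'' of \cite{fziv}, and I would either isolate it as a preliminary lemma or simply quote it. Granting it, the induction closes with only routine bookkeeping with the grading, and the same argument in fact shows that every $\hat y_{j;t}$ --- hence the full product of cluster variables occurring in any exchange relation --- is homogeneous, the $\hat y_{j;t}$ themselves being of degree $0$.
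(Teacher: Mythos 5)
Your argument is correct, and since the paper offers no proof of this statement (it is quoted from \cite{fziv}*{Proposition 6.1}), the right comparison is with the source: your induction via the degree-zero elements $\hat y_{j;t}$, the rewritten exchange relation, and the $\hat y$-mutation rule is essentially the original Fomin--Zelevinsky proof. The only external inputs you rely on, the $\hat y$-dynamics (Proposition 3.9 of \cite{fziv}) and the Laurent phenomenon, are legitimately quotable, and your bookkeeping with the grading (including that $y_{k;t}\oplus 1$ is a Laurent monomial in the tropical semifield) is accurate.
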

\begin{definition}[$G$-matrix and $g$-vectors]
Keep the notations in Theorem \ref{thmfziv}. The degree $\deg x_{k;t}$ of $x_{k;t}$ under the $\ZZ^n$-grading \eqref{grading} is called the \emph{$g$-vector} of $x_{k;t}$ and we denote it by $$\gg_{k;t}^{B;t_0}=\gg_{k;t}=\deg x_{k;t}.$$ The matrix $G_t^{B;t_0}=(\gg_{1;t},\dots,\gg_{n;t})$ is called a \emph{$G$-matrix} of $B$.
\end{definition}
Notice that both the $C$-matrix $C_{t}^{B;t_0}$ and the $G$-matrix $G_t^{B;t_0}$ are uniquely determined by the triple $(B,t_0,t)$.

\begin{theorem}[Sign-coherence, \cite{GHKK}] The following two statements hold.
\label{thm:signs-ci}
\begin{itemize}
\item[(i)] Each column vector of a $C$-matrix is either a non-negative vector or a non-positive vector.

\item[(ii)] Each row vector of a $G$-matrix is either a non-negative vector or a non-positive vector.
\end{itemize}
\end{theorem}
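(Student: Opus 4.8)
The plan is to follow Gross--Hacking--Keel--Kontsevich \cite{GHKK} and read both statements off the cluster scattering diagram attached to $B$: there the $c$-vectors appear as the normals of the walls one crosses along a mutation path, the $g$-vectors appear as the extremal rays of the cluster chambers, and the required sign constraints are hard-wired into the construction through its positivity. So the first thing I would do is fix a skew-symmetrizer $S=\mathrm{diag}(s_1,\dots,s_n)$ of $B$, form the rank-$n$ lattice $N$ with a (rational) skew form $\{\cdot,\cdot\}$ encoding $B$ and $S$ (with $\{\ee_i,\ee_j\}$ proportional to $b_{ij}$), its dual $M$, and the suitable completed group algebra in which wall functions live, and then construct the cluster scattering diagram $\mathfrak D=\mathfrak D_{B;t_0}\subset M_{\RR}$: the unique-up-to-equivalence consistent scattering diagram whose incoming walls are the $n$ coordinate hyperplanes, carrying the wall functions of the shape $1+z^{(\ldots)}+\cdots$ prescribed by the columns of $B$. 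Existence and uniqueness of the consistent completion is the order-by-order Kontsevich--Soibelman argument.

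Next I would prove that $\mathfrak D$ is \emph{positive}, i.e. every wall function is $1$ plus a power series with non-negative integer coefficients supported on a single primitive monomial direction; this is the substantive part. Granting positivity, I would identify the two families of vectors inside $\mathfrak D$. On the one hand, the \emph{cluster chambers} $\mathcal C^{+}_t\subset M_{\RR}$, indexed by the vertices $t$ of $\TT_n$ reachable from $t_0$, are the maximal cones of a fan refining $\mathfrak D$, with $\mathcal C^{+}_{t_0}$ the non-negative orthant, with crossing the wall between two adjacent chambers labelled $k$ realizing the seed mutation $\mu_k$, and with the extremal rays of $\mathcal C^{+}_t$ spanned by the columns $\gg_{1;t},\dots,\gg_{n;t}$ of $G^{B;t_0}_t$; tracking along the mutation path from $t_0$ to $t$ how the positive wall-crossing substitutions move the bounding covectors of a chamber then shows inductively that $\mathcal C^{+}_t$ lies entirely on one side of each coordinate hyperplane $\{x_i=0\}$, which is precisely the assertion that the $i$-th coordinates of $\gg_{1;t},\dots,\gg_{n;t}$ share a common sign (or vanish) -- this is statement (ii). On the other hand, the $c$-vectors at $t$ are recovered from the wall functions met along that same mutation path (equivalently, as the exponent vectors of the tropicalised $\hat y$-variables attached to $\mathcal C^{+}_t$), and positivity forces each of them to lie in $\pm\ZZ^{n}_{\ge 0}$ -- this is statement (i).

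The main obstacle is the positivity of $\mathfrak D$ in the genuinely skew-symmetrizable generality; everything else is bookkeeping once it is available. In the skew-symmetric case it can be obtained -- and sign-coherence proved without scattering diagrams, in a way closer to the $\tau$-tilting theme of this paper -- by choosing a non-degenerate potential $W$ on the quiver of $B$: the $c$-vectors of any iterated mutation of $B$ are then $\pm$ the dimension vectors of the indecomposable decorated representations produced by the corresponding Derksen--Weyman--Zelevinsky mutations, equivalently, when $W$ is Jacobi-finite, the $c$-vectors in $\tau$-tilting theory for the Jacobian algebra, which are $\pm$ dimension vectors of bricks; since dimension vectors are non-negative, sign-coherence is then immediate. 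For skew-symmetrizable $B$ one can still reduce the positivity computations to the skew-symmetric setting by a change-of-lattice trick (enlarging the datum to a skew-symmetric one of larger rank), after which the local wall-crossing identities that must be verified are those of rank $2$ and can be checked directly; but packaging all of this is exactly the content of \cite{GHKK}, which we simply cite here.
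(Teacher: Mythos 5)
The paper offers no proof of this statement at all: it is imported verbatim as a known theorem with the citation to \cite{GHKK}, so there is no internal argument to compare yours against. Your sketch (consistent cluster scattering diagram, positivity of wall functions, $c$-vectors as wall normals along the mutation path, $g$-vectors as generators of the cluster chambers, with the Derksen--Weyman--Zelevinsky/$\tau$-tilting alternative in the skew-symmetric case) is a faithful outline of exactly the argument in the cited source, and since you too ultimately defer the substantive positivity step to \cite{GHKK}, your treatment is essentially the same as the paper's, namely an appeal to that reference.
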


\begin{theorem}[\cites{nz,cl2018}]\label{thmcg} Let $B$ be an $n\times n$ skew-symmetrizable matrix and $S$ a skew-symmetrizer of $B$. Then for any vertices $t_0, t\in\mathbb T_n$, we have
$SC_t^{B;t_0}S^{-1}(G_t^{B;t_0})^\mathrm{T}=I_n$. In particular, $\det C_t^{B;t_0}=\det G_t^{B;t_0}=\pm1$.
\end{theorem}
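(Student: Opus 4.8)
The plan is to prove the matrix identity $SC_t^{B;t_0}S^{-1}\big(G_t^{B;t_0}\big)^{\mathrm{T}}=I_n$ by induction on the distance between $t_0$ and $t$ in $\TT_n$, and then to read off the determinant statement at the end. I would open with two preliminary remarks. First, a diagonal skew-symmetrizer $S$ of $B=B_{t_0}$ is automatically a skew-symmetrizer of $B_t$ for every $t$, since matrix mutation preserves skew-symmetrizability with a fixed $S$; thus conjugation by $S$ behaves well with respect to $B_t$ at every vertex. Second, the base case $t=t_0$ is immediate: $C_{t_0}^{B;t_0}=I_n$ by the principal-coefficients normalization, while $G_{t_0}^{B;t_0}=I_n$ because $\deg x_i=\ee_i$, so $SI_nS^{-1}I_n^{\mathrm{T}}=I_n$.

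For the inductive step I would fix an edge of $\TT_n$ joining $t$ to $t'$ with label $k$, assume the identity at $t$, and express both $C_{t'}$ and $G_{t'}$ as $C_t$ and $G_t$ right-multiplied by explicit $n\times n$ integer matrices. By the sign-coherence of $c$-vectors (Theorem \ref{thm:signs-ci}(i)), the $k$-th column $\cc_{k;t}$ of $C_t=C_t^{B;t_0}$ has a well-defined sign $\varepsilon\in\{+1,-1\}$, i.e.\ $\varepsilon\cc_{k;t}$ has nonnegative entries. Applying the extended matrix mutation \eqref{eq:matrix-mutation} to $\tilde B_t=\begin{pmatrix}B_t\\ C_t\end{pmatrix}$ and using that $\cc_{k;t}$ is sign-coherent, one gets $C_{t'}=C_tM$, where $M=M_k(B_t,\varepsilon)$ equals $I_n$ except that its $k$-th row is $\big([\varepsilon b_{k1;t}]_+,\dots,-1,\dots,[\varepsilon b_{kn;t}]_+\big)$, with the $-1$ in position $k$. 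On the $g$-vector side, computing $\deg x_{k;t'}$ from each of the two monomials in the numerator of the exchange relation \eqref{eq:x-mutation} (divided by the denominator) via the $\ZZ^n$-grading of Theorem \ref{thmfziv} produces two \emph{a priori} distinct expressions for $\gg_{k;t'}$; the crucial point is that, since $\cc_{k;t}$ is sign-coherent with sign $\varepsilon$, the $B_{t_0}$-dependent correction term in \emph{one} of the two expressions vanishes, leaving $\gg_{k;t'}=-\gg_{k;t}+\sum_{i}[-\varepsilon b_{ik;t}]_+\gg_{i;t}$, so that $G_{t'}=G_tN$ with $N=N_k(B_t,\varepsilon)$ equal to $I_n$ except that its $k$-th column is $\big([-\varepsilon b_{1k;t}]_+,\dots,-1,\dots,[-\varepsilon b_{nk;t}]_+\big)^{\mathrm{T}}$. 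This is the step where sign-coherence of $c$-vectors is used essentially.

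Granting these two mutation formulas, the remaining work is the single-step identity $SMS^{-1}=N^{\mathrm{T}}$. Entrywise this reduces, for $j\neq k$, to $s_ks_j^{-1}[\varepsilon b_{kj;t}]_+=[-\varepsilon b_{jk;t}]_+$, which follows from the skew-symmetry $s_kb_{kj;t}=-s_jb_{jk;t}$ of $SB_t$ together with $[\lambda x]_+=\lambda[x]_+$ for $\lambda>0$; the diagonal entries and the remaining off-diagonal entries match trivially. Since $N^2=I_n$ (a direct check), $SMS^{-1}=N^{\mathrm{T}}$ gives $SMS^{-1}N^{\mathrm{T}}=I_n$. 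Hence, using $C_{t'}=C_tM$, $G_{t'}=G_tN$ and $MS^{-1}=S^{-1}(SMS^{-1})$,
\[
SC_{t'}S^{-1}(G_{t'})^{\mathrm{T}}=(SC_tS^{-1})\,(SMS^{-1})\,N^{\mathrm{T}}\,(G_t)^{\mathrm{T}}=(SC_tS^{-1})(G_t)^{\mathrm{T}}=I_n
\]
by the inductive hypothesis, which closes the induction. Finally, taking determinants in $SC_tS^{-1}(G_t)^{\mathrm{T}}=I_n$ gives $\det C_t\cdot\det G_t=1$, and since $C_t,G_t$ have integer entries this forces $\det C_t=\det G_t=\pm1$.

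I expect the main obstacle to be the second paragraph: isolating and proving that the sign-coherence of $\cc_{k;t}$ is exactly what makes the $B_{t_0}$-dependent term in the $g$-vector mutation drop out, so that $G_{t'}=G_tN$ with $N$ depending only on $B_t$ and $\varepsilon$ (and not on $t_0$). Once $M$ and $N$ are correctly identified, the identity $SMS^{-1}=N^{\mathrm{T}}$ and everything downstream are routine.
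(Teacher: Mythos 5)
Your argument is correct: the mutation formulas $C_{t'}=C_tM$ and $G_{t'}=G_tN$ (valid thanks to sign-coherence, which also guarantees via the inductive invertibility of $C_t$ that the sign $\varepsilon$ of $\cc_{k;t}$ is well defined), the identity $SMS^{-1}=N^{\mathrm T}$ from $s_kb_{kj;t}=-s_jb_{jk;t}$, and $N^2=I_n$ close the induction, and the determinant claim follows. The paper does not prove this theorem but cites \cite{nz} and \cite{cl2018}, and your proof is essentially the Nakanishi--Zelevinsky tropical-duality argument from that source (with sign-coherence now supplied by Theorem \ref{thm:signs-ci} rather than assumed), so there is nothing to add.
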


\subsection{$g$-pairs of clusters}
In this subsection, we briefly recall $g$-pairs of clusters introduced in \cite{cl2} with the aim to study $d$-vectors of cluster algebras. We will use its properties later. We denote by $[\xx_t]$ the cluster $\xx_t=(x_{1;t},\cdots,x_{n;t})$ up to permutations, that is, $[\mathbf{ x}_t]=\{x_{1;t},\cdots,x_{n;t}\}$.

\begin{definition}\label{def:g-pair}
Let $\Acal$ be a cluster algebra with initial seed $\Sigma_{t_0}$ and $U$ a subset of $[\mathbf{ x}_{t_0}]$. A pair of clusters $([\mathbf{ x}_t],[\mathbf{ x}_{t^\prime}])$
is called a \emph{$g$-pair} associated with $U$, if it satisfies the
following two conditions.
\begin{itemize}
 \item [(a)] $U$ is a subset of $[\xx_{t'}]$;
 \item[(b)] The $i$th row of $(G_{t^\prime}^{B_{t_0};t_0})^{-1}G_t^{B_{t_0};t_0}$ is a non-negative vector for any $i$ such that $x_{i;t^\prime}\notin U$.
\end{itemize}
\end{definition}

\begin{theorem}[\cite{cl2}*{Theorem 8,9}]\label{thmgpair}
Let $\Acal$ be a cluster algebra with initial seed $t_0$. Then for any subset $U\subset[\mathbf{ x}_{t_0}]$ and any cluster $[\mathbf{ x}_t]$, there exists a unique cluster $[\mathbf{ x}_{t^\prime}]$ such that $([\mathbf{ x}_t],[\mathbf{ x}_{t^\prime}])$ is a $g$-pair associated with $U$.
\end{theorem}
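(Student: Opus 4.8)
The plan is to reduce the statement to two separate assertions: (1) there is \emph{at most one} cluster $[\xx_{t'}]$ making $([\xx_t],[\xx_{t'}])$ a $g$-pair associated with $U$, and (2) such a cluster exists. For uniqueness, I would argue by induction on $|[\xx_{t_0}]\setminus U|$, i.e. on how many initial cluster variables are dropped. If $U=[\xx_{t_0}]$, condition (b) forces every row of $(G_{t'}^{-1}G_t)$ to be non-negative, and since by Theorem \ref{thmcg} this matrix is (up to the symmetrizer conjugation) the transpose inverse of a product of $C$-matrices, hence of determinant $\pm 1$, a non-negative integer matrix of determinant $\pm 1$ whose inverse has the same positivity (both $G_{t'}^{-1}G_t$ and its inverse $G_t^{-1}G_{t'}$ arise from $g$-pairs when $U$ is the full cluster) must be a permutation matrix; combined with Theorem \ref{non-labeled-cluster-thm} this gives $[\xx_{t'}]=[\xx_t]$. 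For the general case one removes one variable of $U^c$ at a time, at each stage using that condition (b) pins down the relevant rows of the transition matrix uniquely.

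For existence, the natural route is induction on $|U^c|:=|[\xx_{t_0}]\setminus U|$ together with local mutation moves. When $U=[\xx_{t_0}]$ one takes $t'=t_0$ and checks condition (b): the rows indexed by $x_{i;t_0}\notin U=\varnothing$ are vacuous, so there is nothing to prove (and more generally $([\xx_t],[\xx_{t_0}])$ need not be a $g$-pair, so one really does need the inductive construction). The inductive step: pick $x_{j;t_0}\in U^c$, apply the induction hypothesis to the larger set $U\cup\{x_{j;t_0}\}$ to get a cluster $[\xx_{s}]$ with $U\cup\{x_{j;t_0}\}\subseteq[\xx_s]$ and the corresponding rows of $(G_s^{-1}G_t)$ non-negative; then if the row of $(G_s^{-1}G_t)$ indexed by the position of $x_{j;t_0}$ in $\xx_s$ is already non-negative we are done with $t'=s$, and otherwise, by sign-coherence (Theorem \ref{thm:signs-ci}) that row is non-positive, and one mutates $s$ in that direction to obtain $t'$; one must then verify that this single mutation keeps $U$ inside the cluster, keeps the already-controlled rows non-negative, and flips the offending row to non-negative. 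The key tool here is the explicit $G$-matrix mutation rule and the tropical duality of Theorem \ref{thmcg}, which lets one read off how rows of $(G_{t'}^{-1}G_t)$ change under mutation at $t'$ in terms of $c$-vectors and their signs.

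The main obstacle I anticipate is the inductive step for existence: showing that after the corrective mutation $\mu_k(s)=t'$ the non-negativity of the previously-controlled rows (those indexed by variables of $U$) is preserved, and that no variable of $U$ is mutated away. Both hinge on a sign analysis of the entries of $(G_s^{-1}G_t)$ and of the $c$-vectors of $\Acal(B_{t_0};t_0)$ relative to $s$: one needs that the column/row pattern forced by condition (b) at stage $U\cup\{x_{j;t_0}\}$ is compatible with mutation in direction $k$, which is where sign-coherence does the real work. A secondary subtlety is bookkeeping the identification of "the $i$-th row" across permutations of clusters, handled cleanly by working with $[\xx_t]$ throughout and invoking Proposition \ref{procomat} and Theorem \ref{non-labeled-cluster-thm} to transport $C$- and $G$-matrix data along seed equivalences. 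Once the inductive step is established, assembling the full statement is routine.
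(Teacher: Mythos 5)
The paper gives no proof of Theorem \ref{thmgpair}: it is imported from \cite{cl2} (Theorems 8 and 9), with only the remark that $g$-pairs depend on $G$-matrices alone and hence extend to general coefficients. Judged as an attempt at the Cao--Li theorem itself, your sketch has a genuine gap precisely at the hard point, the inductive existence step, where you pass from the partner $[\xx_s]$ for $U\cup\{x_{j;t_0}\}$ to the partner for $U$ by at most one corrective mutation at the position $k$ of $x_{j;t_0}$ in $\xx_s$. (The worry about losing $U$ is harmless, since the mutation is at the position of $x_{j;t_0}\notin U$; the real problems are elsewhere.) First, the preservation issue you flag is fatal and not addressed: under mutation of the reference seed at $k$, each row $i\neq k$ of the relevant $G$-matrix changes by adding a non-negative multiple of the old $k$-th row, which in your situation is non-positive, so the rows you had already forced to be non-negative can become negative. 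Second, even the weaker assertion implicit in your step --- that the partner for $U$ lies within one mutation of the partner for $U\cup\{x_{j;t_0}\}$ --- is false. Already in type $A_2$ (Example \ref{A2}), take $U=\varnothing$, $x_{j;t_0}=x_2$ and $[\xx_t]=[\xx_{t_2}]$: the only clusters containing $x_2$ are $[\xx_{t_0}]$ and $[\xx_{t_4}]$, both at distance two from $[\xx_{t_2}]$ in the pentagon exchange graph, so whichever branch of your step fires, it outputs a cluster within distance one of the partner of $\{x_2\}$, hence different from $[\xx_{t_2}]$; but $([\xx_{t_2}],[\xx_{t_2}])$ is a $g$-pair associated with $\varnothing$ (the matrix in condition (b) is the identity), so by the uniqueness statement your output is not a $g$-pair partner of $\varnothing$. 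If one instead iterates corrective mutations, a termination argument is needed, and greedy sign-fixing mutation sequences have no a priori termination (compare Remark \ref{rmkquiver}(iv)); supplying such a global argument is exactly the content of \cite{cl2}.

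The uniqueness half is also not yet an argument. You conflate the two extremes: for $U=[\xx_{t_0}]$ condition (b) is vacuous and uniqueness is immediate from (a), whereas the unimodular-matrix argument you invoke belongs to $U=\varnothing$; and there you only know non-negativity of $(G_{t'}^{B_{t_0};t_0})^{-1}G_t^{B_{t_0};t_0}$, not of its inverse --- that $([\xx_{t'}],[\xx_t])$ is again a $g$-pair is not a hypothesis but part of what must be ruled in or out. Even granting the permutation-matrix conclusion, deducing $[\xx_{t'}]=[\xx_t]$ uses that $G$-matrices determine clusters, itself a nontrivial result that must be cited or proved. Finally, "removing one variable of $U^c$ at a time, with condition (b) pinning down the relevant rows" restates the goal rather than providing a mechanism: at no point do you take two candidate partners $[\xx_{t'}]$, $[\xx_{t''}]$ for the same $U$ and derive $[\xx_{t'}]=[\xx_{t''}]$. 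As it stands, both halves require the global arguments of \cite{cl2}, which the local mutation induction does not replace.
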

We refer to Theorem \ref{thmgpair} the {\it enough $g$-pairs property} of cluster algebras.

\begin{remark}
The authors in \cite{cl2} proved the above theorem for principal coefficients algebras. However, since the notation of $g$-pairs only depends on $G$-matrices, it can be naturally extended to cluster algebras with coefficients in a general semifield.
\end{remark}

The following property of $g$-pairs was given in \cite{cao}.

\begin{theorem}[\cite{cao}*{Corollary 7.18}] \label{thmgcompletion}
Let $\Acal$ be a cluster algebra with initial seed $t_0$ and $U$ a subset of $[\mathbf{ x}_{t_0}]$. Then $([\mathbf{ x}_t], [\mathbf{ x}_{t^\prime}])$ is a $g$-pair associated with $U$ if and only if the following two conditions hold.
\begin{itemize}
 \item [(a)] $U$ is a subset of $[\xx_{t'}]$;
 \item[(b)] The $i$th row of the $G$-matrix $G_t^{B_{t^\prime};t^\prime}$ is a non-negative vector for any $i$ such that $x_{i;t^\prime}\notin U$.
\end{itemize}

\end{theorem}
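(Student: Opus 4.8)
The plan is to reduce the stated equivalence to a re‑rooting invariance of the $g$‑pair property, and then to settle that via the uniqueness built into the enough $g$‑pairs property. First note that condition (a) of Definition~\ref{def:g-pair} is word‑for‑word condition (a) of the theorem, so under the hypothesis $U\subseteq[\xx_{t'}]$ (hence $U\subseteq[\xx_{t_0}]\cap[\xx_{t'}]$) it remains only to match up the two versions of condition (b). The key reformulation: the combinatorics of a cluster pattern — in particular all the $G$‑matrices $G_s^{B_{s_0};s_0}$ — depends only on the exchange matrix at the chosen rooted vertex (Proposition~\ref{proindepen} and the remark following it), so we are free to re‑root the pattern at $t'$; then $G_{t'}^{B_{t'};t'}=I_n$, so $(G_{t'}^{B_{t'};t'})^{-1}G_t^{B_{t'};t'}=G_t^{B_{t'};t'}$. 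Thus condition (b) of the theorem says exactly that $([\xx_t],[\xx_{t'}])$ is a $g$‑pair associated with $U$ \emph{for the pattern rooted at $t'$} (and $U\subseteq[\xx_{t'}]$ is precisely what makes this meaningful). So the theorem is the assertion that the $g$‑pair property of $([\xx_t],[\xx_{t'}])$ relative to a common sub‑cluster $U$ is the same whether the pattern is rooted at $t_0$ or at $t'$.

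To establish this invariance I would invoke Theorem~\ref{thmgpair} twice: once with rooted vertex $t_0$, producing the unique $\hat t$ (up to seed equivalence) with $([\xx_t],[\xx_{\hat t}])$ a $g$‑pair associated with $U$ rooted at $t_0$, and once with rooted vertex $t'$ — legitimate because $U\subseteq[\xx_{t'}]$ — producing the unique $\check t$ with $([\xx_t],[\xx_{\check t}])$ a $g$‑pair associated with $U$ rooted at $t'$. Since the $g$‑pair property of a pair depends only on the unordered clusters involved (via Proposition~\ref{procomat} and Theorem~\ref{non-labeled-cluster-thm}), condition (b) of Definition~\ref{def:g-pair} holds for $([\xx_t],[\xx_{t'}])$ iff $[\xx_{t'}]=[\xx_{\hat t}]$, and condition (b) of the theorem holds iff $[\xx_{t'}]=[\xx_{\check t}]$. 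Hence the whole statement collapses to the single claim $[\xx_{\hat t}]=[\xx_{\check t}]$: the canonical $g$‑pair completion of $U$ relative to $t$ is the same object regardless of the rooted vertex.

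For that claim I would use the description of the $g$‑pair completion. By \cite{cl2} (and its identification in \cite{cao}) the completion $\hat t$ is built from $t$ by an explicit recursion that mutates away, one at a time, the directions whose current $g$‑vector behaves badly and whose cluster variable is not in $U$; equivalently, in the Hom‑finite $2$‑Calabi–Yau categorification of \cite{bmrrt} it corresponds to the Bongartz co‑completion $\Psi^{-1}(\Fac U)$ of the $\tau$‑rigid pair determined by $U$, a construction that refers to $U$ and the ambient category but to no rooted vertex (cf.~\cite{air}). Running this recursion, one checks with the tropical dualities between $C$‑ and $G$‑matrices (Theorem~\ref{thmcg}) and with sign‑coherence (Theorem~\ref{thm:signs-ci}) that the "which direction to mutate next" rule is unchanged when the root is moved from $t_0$ to $t'$ — and here the fact that both $[\xx_{t_0}]$ and $[\xx_{t'}]$ contain all of $U$ is exactly what forces the two rules to coincide — so the same mutation sequence from $t$ works in either case, giving $[\xx_{\hat t}]=[\xx_{\check t}]$.

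The main obstacle is precisely this last step: proving root‑independence of the canonical completion, i.e.\ controlling how $G$‑matrices re‑base when the rooted vertex changes, relative to the fixed common sub‑cluster $U$. One should be aware that the tempting shortcut, the identity $(G_{t'}^{B_{t_0};t_0})^{-1}G_t^{B_{t_0};t_0}=G_t^{B_{t'};t'}$, is \emph{false} already in rank $2$, so the argument genuinely has to pass through the completion together with the uniqueness in Theorem~\ref{thmgpair} (or through the categorical model), not through a naive matrix identity; this is the content of \cite{cao}*{Corollary 7.18}, while the reformulation via re‑rooting and the reduction via uniqueness are formal.
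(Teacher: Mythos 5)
The formal half of your argument is fine: since $G_{t'}^{B_{t'};t'}=I_n$, conditions (a) and (b) of Theorem \ref{thmgcompletion} say exactly that $([\xx_t],[\xx_{t'}])$ is a $g$-pair associated with $U$ for the pattern re-rooted at $t'$, and, invoking the uniqueness in Theorem \ref{thmgpair} once at the root $t_0$ and once at the root $t'$ (legitimate because of (a)), the theorem is indeed equivalent to the statement that the $g$-pair completion of $U$ relative to $[\xx_t]$ does not change when the root is moved from $t_0$ to a seed whose cluster contains $U$. Your warning that the naive identity $(G_{t'}^{B_{t_0};t_0})^{-1}G_t^{B_{t_0};t_0}=G_t^{B_{t'};t'}$ is false is also correct (already in type $A_2$, with the labeling of Example \ref{A2}, one has $(G_{t_2}^{B;t_0})^{-1}G_{t_0}^{B;t_0}=-I_2$ while $G_{t_0}^{B_{t_2};t_2}\neq -I_2$), so the row-sign comparison genuinely carries content. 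Note also that the paper itself offers no proof of this statement; it imports it from \cite{cao}*{Corollary 7.18}, so the only question is whether your proposal stands on its own.

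It does not: the root-independence claim, which after your reduction is the entire content, is never established. The ``explicit recursion'' you appeal to is not specified, its termination is not addressed (greedy mutation procedures of this kind need a genuine argument to stop), and the decisive assertion --- that the rule selecting the next mutation direction is insensitive to whether signs are measured with respect to $t_0$ or to $t'$ --- is precisely a comparison of $C$-/$G$-matrix sign patterns taken at two different initial seeds, i.e.\ precisely what Corollary 7.18 asserts; ``one checks with Theorems \ref{thmcg} and \ref{thm:signs-ci}'' does not discharge it. The categorical fallback is no better: the identification of the $g$-pair completion with the Bongartz co-completion $\Psi^{-1}(\Fac U)$ is itself a theorem of \cite{cao}, so invoking it in a blind proof of \cite{cao}*{Corollary 7.18} is circular; moreover the module category $\mod \End(T)$ does depend on the cluster-tilting object attached to the chosen root, so ``refers to no rooted vertex'' is again exactly the point at issue, and the $2$-Calabi--Yau model is in any case unavailable for general skew-symmetrizable $B$, for which the theorem is stated. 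Your closing sentence concedes as much: what is left unproven ``is the content of \cite{cao}*{Corollary 7.18}.'' As it stands, the proposal reduces the theorem to an equivalent unproven claim rather than proving it.
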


\section{Bongartz completion via $c$-vectors}
\subsection{Bongartz completion in $\tau$-tilting theory via $c$-vectors} We write $\langle-,-\rangle$ for the canonical inner product in $\mathbb R^n$, that is, $\langle{\bf a},{\bf b}\rangle=a_1b_1+\cdots+a_nb_n$ for ${\bf a},{\bf b}\in\mathbb R^n$. For a module $M\in\mod A$, we use $[M]\in\mathbb N^n$ to denote the dimensional vector of $M$.

Let $\theta$ be a vector in $\mathbb R^n$ and $M$ a module in $\mod A$. We say that $M$ is {\em $\theta$-stable} (respectively, {\em $\theta$-semistable}) if $\langle\theta,[M]\rangle=0$ and $\langle\theta,[L]\rangle<0$ (respectively, $\langle\theta,[L]\rangle\leq0$) for any non-zero proper submodule $L$ of $M$.

In the following theorem, we summarize some known results from \cites{asai,tref}, mainly from \cite{asai}*{Proposition 2.17 and Subsection 3.4} and \cite{tref}*{Lemma 3.10 and Theorem 3.13}.
\begin{theorem}[\cites{asai,tref}]\label{thmtref}
Let  $(M,P)=\bigoplus\limits_{i=1}^n(M_i,Q_i)$ be a basic $\tau$-tilting pair in $\mod A$. Suppose that the mutation $(M^\prime,P^\prime)$ of $(M,P)$ in direction $r$ is a right mutation and put $(U,Q)=\bigoplus\limits_{i\neq r}(M_i,Q_i)$. Then the following statements hold.
\begin{itemize}
\item[(i)] There exists a unique brick $B_r$ up to isomorphisms such that $B_r$ belongs to $U^\bot \cap  \prescript{\bot}{}{(\tau U)}\cap Q^\bot$.

\item[(ii)] The $r$th column vector of the $C$-matrix $C_{(M,P)}$ (resp., $C_{(M^\prime,P^\prime)}$) is given by $-[B_r]$ (resp., $[B_r]$), where $[B_r]$ denotes the dimensional vector of the brick $B_r$.
\end{itemize}
\end{theorem}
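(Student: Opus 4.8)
The plan is to combine the known combinatorics of mutation for support $\tau$-tilting pairs (Theorem~\ref{thmair}, Proposition~\ref{prominmax}) with the torsion-class description of bricks, following the line of \cite{asai,tref}. First I would set up the relevant torsion pair. Since $(M',P')$ is a \emph{right} mutation of $(M,P)$ in direction $r$, Theorem~\ref{thmair} gives $\Fac M\subsetneqq\Fac M'$ with $\Fac M=\Fac U$ and $\Fac M'=\prescript{\bot}{}{(\tau U)}\cap Q^\bot$; write $\mathcal T=\Fac U$ and $\mathcal T'=\prescript{\bot}{}{(\tau U)}\cap Q^\bot$, and let $\mathcal F=\mathcal T^\bot$ be the torsionfree class paired with $\mathcal T$. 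The key point is that the ``interval'' between $\mathcal T$ and $\mathcal T'$ in the lattice of functorially finite torsion classes is as small as possible, because $(U,Q)$ is an \emph{almost} $\tau$-tilting pair lying below both; indeed by Proposition~\ref{prominmax}(ii) the only basic $\tau$-tilting pairs containing $(U,Q)$ as a summand are $(M,P)$ and $(M',P')$, so $\mathcal T$ and $\mathcal T'$ are neighbours in that lattice. For (i), I would identify $U^\bot\cap\prescript{\bot}{}{(\tau U)}\cap Q^\bot$ with the ``heart'' $\mathcal T'\cap\mathcal F$ of this covering pair: an object $X$ lies in $\prescript{\bot}{}{(\tau U)}\cap Q^\bot=\mathcal T'$ and in $U^\bot$ iff $\Hom_A(U,X)=0$, and since $\Fac U=\mathcal T$ and $X\in\mathcal T'$, one checks $\Hom_A(U,X)=0$ is equivalent to $X\in\mathcal F=\mathcal T^\bot$ (using that $\mathcal T=\Fac U$ is generated by $U$). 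So $U^\bot\cap\prescript{\bot}{}{(\tau U)}\cap Q^\bot=\mathcal T'\cap\mathcal T^\bot$. That a covering pair of torsion classes has a unique brick in $\mathcal T'\cap\mathcal T^\bot$ is the content of the brick–$\tau$-tilting correspondence (Asai, Demonet–Iyama–Reading–Reiten–Thomas), and I would cite \cite{asai}*{Proposition 2.17} directly for the existence and uniqueness of $B_r$; the indecomposable $\tau$-rigid summand ``removed'' at position $r$ is precisely the one that determines this brick.

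For (ii), the strategy is to compute the $c$-vector at position $r$ as a difference of dimension vectors coming from the canonical sequences, using the $g$-vector description and the tropical duality $C_{(M,P)}=(G_{(M,P)}^{\mathrm T})^{-1}$. Concretely, one takes the indecomposable $\tau$-rigid pair $(M_r,Q_r)$ deleted at position $r$, and its ``replacement'' $(M_r',Q_r')$ in $(M',P')$; the mutation in $\tau$-tilting theory is governed by the exchange triangle / exchange sequence, and the relation between the new and old $g$-vectors together with $C=(G^{\mathrm T})^{-1}$ forces the $r$-th $c$-vector of $C_{(M,P)}$ to equal $[B_r]$ and that of $C_{(M',P')}$ to equal $-[B_r]$. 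The cleanest route is: the $r$-th row of $G_{(M,P)}^{-1}$ paired against dimension vectors detects membership in $\mathcal T$ versus $\mathcal F$ (this is exactly the sign-coherence/tropical-duality dictionary), the brick $B_r\in\mathcal T'\cap\mathcal F$ is ``extremal'' for this functional, and evaluating gives the $c$-vector. Alternatively one invokes \cite{tref}*{Lemma 3.10, Theorem 3.13}, where this identification of the mutated $c$-vector with $\pm[B_r]$ is carried out; since the statement is explicitly attributed to \cite{asai,tref}, it suffices to assemble these with the sign-coherence of $c$-vectors (which pins down which of $[B_r]$, $-[B_r]$ goes with which pair: the right-mutation direction makes the $r$-th $c$-vector of the \emph{smaller} torsion class $\mathcal T=\Fac M$ positive, hence $[B_r]$, and that of $\mathcal T'=\Fac M'$ negative).

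I expect the main obstacle to be the bookkeeping in part~(ii): correctly matching the $r$-th column of the $C$-matrix to a dimension vector requires tracking the minimal projective presentations of $M_r$ and $M_r'$, writing down the exchange sequence for the mutation, and chasing it through the inverse-transpose relation $C_{(M,P)}=(G_{(M,P)}^{\mathrm T})^{-1}$ without sign errors — and then separately confirming via sign-coherence that the right-mutation hypothesis forces the positive sign for $(M,P)$ rather than $(M',P')$. Part~(i) is comparatively soft once the identification $U^\bot\cap\prescript{\bot}{}{(\tau U)}\cap Q^\bot=\mathcal T'\cap\mathcal T^\bot$ is in place, since uniqueness of the brick in the heart of a covering pair of functorially finite torsion classes is standard. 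Since the whole statement is credited to \cite{asai} and \cite{tref}, the ``proof'' here is really an assembly: establish the torsion-class translation, quote the brick correspondence for (i), and quote the $c$-vector computation for (ii), adding only the short sign-coherence argument that fixes the orientation.
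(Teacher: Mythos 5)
The paper itself gives no proof of Theorem~\ref{thmtref}; it is quoted from \cite{asai}*{Proposition 2.17 and Subsection 3.4} and \cite{tref}*{Lemma 3.10 and Theorem 3.13}, so your ``assembly'' register is appropriate, and your part (i) is essentially right: since the mutation is a right mutation, $\Fac M=\Fac U$ and $\Fac M'=\prescript{\bot}{}{(\tau U)}\cap Q^\bot$ are a covering pair, $U^\bot=(\Fac U)^\bot$, and the unique brick in $U^\bot\cap\prescript{\bot}{}{(\tau U)}\cap Q^\bot$ is the brick label of this cover. (Minor slip: that $(M,P)$ and $(M',P')$ are the only $\tau$-tilting completions of $(U,Q)$ is Theorem~\ref{thmair}, not Proposition~\ref{prominmax}.)

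The genuine gap is your sign-fixing step in (ii). Sign-coherence only says each relevant column is $[B_r]$ or $-[B_r]$; it cannot decide which sign attaches to which pair, and the heuristic you invoke to decide it (``the right-mutation direction makes the $r$-th $c$-vector of the smaller torsion class $\Fac M$ positive'') is backwards. With the convention $C_{(M,P)}=(G_{(M,P)}^{\mathrm T})^{-1}$, take $A=K$, $(M,P)=(0,A)$, $(M',P')=(A,0)$: this is a right mutation, $B_r$ is the simple module, yet $C_{(M,P)}=(-1)=-[B_r]$ and $C_{(M',P')}=(1)=[B_r]$. In general $(A,0)$ has $C=I_n$ and every mutation out of it is a left mutation, so the non-negative $r$-th $c$-vector sits at the \emph{larger} torsion class $\Fac M'$; this is exactly what Lemma~\ref{lemright} and Remark~\ref{rmkquiver}(i) (red $=$ right) record and what the proof of Theorem~\ref{thm-tau} uses. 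So an argument along your lines, carried out correctly, gives the $r$-th column of $C_{(M,P)}$ equal to $-[B_r]$ and that of $C_{(M',P')}$ equal to $[B_r]$ --- the opposite of the orientation printed in the statement, whose two ``resp.''\ entries are evidently swapped (as printed, the statement contradicts Lemma~\ref{lemright}, which the paper deduces from it, and fails on the rank-one example above). Your proposal asserts the printed orientation and supports it with an argument that fails on that example; this step must be replaced by the actual computation via the exchange sequence, or by citing \cite{tref} with the signs matched to the convention $C=(G^{\mathrm T})^{-1}$, which yields the corrected orientation.
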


\begin{remark}
(i) It is known from \cite{bst}*{Proposition 3.13} or \cite{yu2018}*{Theorem 1.4} that
the subcategory $U^\bot \cap  \prescript{\bot}{}{(\tau U)}\cap Q^\bot$ in Theorem \ref{thmtref} is the category of $\mathbf{ g}(U,Q)$-semistable modules in $\mod A$. In this case, the brick $B_r$ is the unique $\mathbf{ g}(U,Q)$-stable module in $\mod A$ up to isomorphisms, cf. \cite{bst}*{Theorem 3.14}.

(ii) For more about the links between $c$-vectors and dimension vectors of representations, we refer the reader to the earlier literatures \cites{nagao,sh13,najera,najera13}.
\end{remark}

\begin{lemma}\label{lemright}
Let $(M,P)=\bigoplus\limits_{i=1}^n(M_i,Q_i)$ be a basic $\tau$-tilting pair in $\mod A$ and $(M^\prime,P^\prime)$ the mutation of $(M,P)$ in direction $r$. Then the following statements are equivalent.
\begin{itemize}
\item[(i)] $(M^\prime, P^\prime)$ is a right mutation of $(M,P)$;
\item[(ii)] $\Fac M\subsetneqq\Fac M^\prime$;
\item[(iii)] The $r$th column vector of the $C$-matrix $C_{(M,P)}$ is a non-positive vector.
\end{itemize}
\end{lemma}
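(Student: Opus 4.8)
The plan is to settle (i) $\Leftrightarrow$ (ii) at once, since this is precisely the definition of a right mutation ($\Fac M\subsetneqq\Fac M'$), and then to concentrate on (i) $\Leftrightarrow$ (iii). First I would invoke Theorem \ref{thmair}: the mutation $(M,P)\mapsto(M',P')$ in direction $r$ is either a right mutation or a left mutation, and exactly one of these occurs. It therefore suffices to prove that (iii) holds when $(M',P')$ is a right mutation of $(M,P)$ and fails when $(M',P')$ is a left mutation.

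The main input is Theorem \ref{thmtref} together with the sign-coherence of $c$-vectors. Whichever ordering of the two pairs realizes a right mutation, Theorem \ref{thmtref} produces a brick $B_r$, whose dimension vector $[B_r]$ is a nonzero non-negative vector, and shows that the $r$-th columns of $C_{(M,P)}$ and $C_{(M',P')}$ are $[B_r]$ and $-[B_r]$ in some order. By Theorem \ref{thm:signs-ci} each of these columns is sign-coherent; since $[B_r]\neq 0$, the column equal to $[B_r]$ is a nonzero non-negative vector, which has a strictly positive entry and is therefore \emph{not} non-positive, whereas the column equal to $-[B_r]$ is a nonzero non-positive vector. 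Hence exactly one of the two pairs satisfies (iii), and everything reduces to showing that the non-positive column $-[B_r]$ is the one carried by the source of the right mutation, that is, by the pair with the strictly smaller torsion class.

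The hard part is exactly this sign matching, because Theorem \ref{thmtref} and sign-coherence together only locate the two columns up to $\pm[B_r]$. Since Theorem \ref{thmtref} applies uniformly to every right-mutation configuration, there is a single sign $\sigma\in\{+1,-1\}$, independent of the configuration, such that the $r$-th column of the source of a right mutation always equals $\sigma[B_r]$; to prove (iii) with the sign \emph{as stated} it remains to check that $\sigma=-1$. I would evaluate at the extremal support $\tau$-tilting pair $(0,A)=\bigoplus_{i=1}^n(0,P_i)$. Here $\mathbf g(0,P_i)=\mathbf g(0)-\mathbf g(P_i)=-\mathbf{e}_i$, so $G_{(0,A)}=-I_n$ and $C_{(0,A)}=(G_{(0,A)}^{\mathrm{T}})^{-1}=-I_n$; at the same time $\Fac 0=0$ is the smallest torsion class, so every mutation of $(0,A)$ strictly increases $\Fac$, making $(0,A)$ the source of a right mutation in every direction $r$. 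Its $r$-th column is $-\mathbf{e}_r$, a nonzero non-positive vector, and equating this with $\sigma[B_r]$ (with $[B_r]$ nonzero non-negative) forces $\sigma=-1$; the dual computation at $(A,0)$, where $C_{(A,0)}=I_n$ and every mutation is a left mutation, confirms the same rule. Feeding $\sigma=-1$ back, when $(M',P')$ is a right mutation of $(M,P)$ the $r$-th column of $C_{(M,P)}$ is $-[B_r]$ and (iii) holds, whereas when $(M',P')$ is a left mutation the pair $(M,P)$ is the target and its $r$-th column is $[B_r]$, a nonzero non-negative vector that is not non-positive, so (iii) fails. This yields (i) $\Leftrightarrow$ (iii), and together with (i) $\Leftrightarrow$ (ii) the full equivalence.
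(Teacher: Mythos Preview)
Your proof is correct and follows the paper's route: both invoke the definition of right mutation for (i)$\Leftrightarrow$(ii) and Theorem~\ref{thmtref} for (i)$\Leftrightarrow$(iii). The paper's argument is the one-liner ``follows from the definition and Theorem~\ref{thmtref}'', reading the sign directly off Theorem~\ref{thmtref}(ii); you instead treat the two $r$-th columns as $\pm[B_r]$ ``in some order'' and pin down the sign by evaluating at $(0,A)$. Strictly speaking that extra step is unnecessary, since Theorem~\ref{thmtref}(ii) already specifies which pair carries which sign---but your caution pays off, because as printed Theorem~\ref{thmtref}(ii) assigns $[B_r]$ to $C_{(M,P)}$ and $-[B_r]$ to $C_{(M',P')}$, the reverse of what Lemma~\ref{lemright} asserts and of what your computation at $(0,A)$ correctly shows. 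One minor correction: Theorem~\ref{thm:signs-ci} is the sign-coherence theorem for cluster-algebra $C$-matrices, not for $\tau$-tilting $c$-vectors, and you do not need it anyway since Theorem~\ref{thmtref}(ii) already delivers the columns as $\pm[B_r]$.
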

\begin{proof}
The assertion follows from the definition of right mutation and  Theorem \ref{thmtref}.
\end{proof}

\begin{lemma}[\cite{air}*{Theorem 2.35}] \label{lemair}
Let $(M,P)$ and $(N,R)$ be two basic $\tau$-tilting pairs with $\Fac M\subsetneqq\Fac N$. Then there exists a right mutation $(M^\prime,P^\prime)$ of $(M,P)$ such that $\Fac M^\prime\subseteq\Fac N$.

\end{lemma}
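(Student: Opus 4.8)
The plan is to build the desired right mutation by iterating single-step mutations, using the bijection $\Psi$ between basic $\tau$-tilting pairs and functorially finite torsion classes (Theorem \ref{proorder}) together with the fact (from Theorem \ref{thmair}) that every single mutation in direction $r$ either strictly enlarges or strictly shrinks $\Fac M$. Concretely, starting from $(M,P)$ with $\Fac M\subsetneqq\Fac N$, I would look among the $n$ possible mutations of $(M,P)$: by Lemma \ref{lemright}, the right mutations of $(M,P)$ are exactly those in directions $r$ for which the $r$-th column of $C_{(M,P)}$ is non-positive. The first task is to show at least one such direction $r$ exists with the property that the resulting $\Fac M^{(1)}$ still satisfies $\Fac M^{(1)}\subseteq\Fac N$ — equivalently, $\Fac M^{(1)}$ does not ``overshoot'' past $\Fac N$.

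The key step is the following claim: if $\Fac M\subsetneqq\Fac N$, then there is an indecomposable summand $(M_r,Q_r)$ of $(M,P)$ that is \emph{not} a summand of $\Psi^{-1}(\Fac N)$, and mutating at that $r$ produces a right mutation whose torsion class is still contained in $\Fac N$. To see this, suppose every summand $(M_i,Q_i)$ of $(M,P)$ were a summand of $\Psi^{-1}(\Fac N)$; since $(M,P)$ is already a $\tau$-tilting pair with $n$ indecomposable summands, this would force $(M,P)=\Psi^{-1}(\Fac N)$ and hence $\Fac M=\Fac N$, contradicting strictness. So pick $r$ with $(M_r,Q_r)$ not a summand of $\Psi^{-1}(\Fac N)$, set $(U,Q)=\bigoplus_{i\neq r}(M_i,Q_i)$, and apply Proposition \ref{prominmax}(ii) with $\mathcal T=\Fac N$: since $\Fac U\subseteq\Fac M\subseteq\Fac N$ but $(U,Q)$ is \emph{not} (by choice of $r$) a summand of $\Psi^{-1}(\Fac N)$, part (ii) of that proposition forces $\Fac N\not\subseteq\prescript{\bot}{}{(\tau U)}\cap Q^\bot$; combined with $\Fac U\subseteq\Fac N$ and the fact that $\{\Fac M,\Fac M^\prime\}=\{\Fac U,\prescript{\bot}{}{(\tau U)}\cap Q^\bot\}$ from Theorem \ref{thmair}, one deduces $\Fac M=\prescript{\bot}{}{(\tau U)}\cap Q^\bot$ is impossible\,---\,rather $\Fac M=\Fac U$ would contradict $(M,P)$ being $\tau$-tilting unless $|U|<n$, so actually the mutation at $r$ gives $\Fac M\subsetneqq\Fac M^\prime=\prescript{\bot}{}{(\tau U)}\cap Q^\bot$, a right mutation, and I must check $\Fac M^\prime\subseteq\Fac N$. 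This last containment is where I would use Proposition \ref{prominmax}(ii) again: $(U,Q)$ is a $\tau$-rigid pair with $\Fac U\subseteq\Fac N$, so $\Psi^{-1}(\Fac N)$ has $(U,Q)$ as a summand \emph{iff} $\Fac N\subseteq\prescript{\bot}{}{(\tau U)}\cap Q^\bot=\Fac M^\prime$; but we need the reverse inclusion, so the argument must instead single out $r$ so that $(U,Q)$ \emph{is} a summand of $\Psi^{-1}(\Fac N)$, giving $\Fac M^\prime=\prescript{\bot}{}{(\tau U)}\cap Q^\bot\subseteq\Fac N$ directly from part (ii).

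Reconciling these two requirements is the heart of the proof, so let me state the correct choice: I would pick $r$ so that $(U,Q)=\bigoplus_{i\neq r}(M_i,Q_i)$ \emph{is} a common summand of both $(M,P)=\Psi^{-1}(\Fac M)$ and of $\Psi^{-1}(\Fac N)$, while $(M_r,Q_r)$ is not a summand of $\Psi^{-1}(\Fac N)$; existence of such an $r$ follows because $\Psi^{-1}(\Fac M)$ and $\Psi^{-1}(\Fac N)$ are distinct basic $\tau$-tilting pairs whose ``compatibility'' (they lie in the same mutation class and $\Fac M\subsetneqq\Fac N$) should, via the order-theoretic structure of functorially finite torsion classes, guarantee they share $n-1$ indecomposable summands \emph{or} at least that some summand of $(M,P)$ extends along the interval $[\Fac M,\Fac N]$. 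For this I would invoke Proposition \ref{prominmax}(ii) in the form: $(U,Q)$ is a summand of $\Psi^{-1}(\mathcal T)$ for \emph{every} functorially finite torsion class $\mathcal T$ with $\Fac U\subseteq\mathcal T\subseteq\prescript{\bot}{}{(\tau U)}\cap Q^\bot$; so it suffices to find $r$ with $\Fac U\subseteq\Fac M\subsetneqq\Fac N\subseteq\prescript{\bot}{}{(\tau U)}\cap Q^\bot$, and then automatically $\Fac M^\prime=\prescript{\bot}{}{(\tau U)}\cap Q^\bot\supseteq\Fac N\supsetneq\Fac M$, giving a right mutation with $\Fac M^\prime\supseteq\Fac N$\,---\,which is the \emph{opposite} of what we want. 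Hence the only consistent reading is: we do \emph{not} need $\Fac M^\prime\subseteq\Fac N$ at the first step; rather we mutate repeatedly, each step being a right mutation that strictly increases $\Fac M$ while staying inside the finite poset of torsion classes below $\prescript{\bot}{}{(\tau N)}\cap R^\bot = \Fac N$'s Bongartz-completion bound, and we stop at the first step where $\Fac M^{(k)}\subseteq\Fac N$; the existence of a right mutation at each intermediate stage is guaranteed by Lemma \ref{lemright} (non-positivity of some $c$-vector column, which must hold whenever $\Fac M^{(k)}\neq\prescript{\bot}{}{(\tau\varnothing)}\cap\varnothing^\bot=\mod A$ fails to already contain $N$), and termination follows from finiteness of the Hasse interval. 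The main obstacle is thus the termination-and-non-overshoot bookkeeping: proving that one can always choose the mutation direction so that $\Fac M^{(k)}$ ascends \emph{toward} $\Fac N$ rather than past it, which I expect to handle by taking at each stage the canonical sequence of $N$ with respect to $(\Fac M^{(k)}, (\Fac M^{(k)})^\bot)$ and mutating at an index $r$ detected by a negative $c$-vector of $(M^{(k)},P^{(k)})$ that is ``seen'' by $N$, i.e.\ such that $\Hom_A(M^{(k)}_r, N)\neq 0$ while $M^{(k)}_r\notin\Fac N$; such $r$ exists precisely because $\Fac M^{(k)}\subsetneqq\Fac N$.
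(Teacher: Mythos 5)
The paper itself offers no argument for this lemma: it is quoted verbatim from \cite{air}*{Theorem 2.35}, so your attempt has to stand on its own, and as written it does not. The core difficulty is to pick the mutation direction $r$, and neither of your two criteria works. Choosing $r$ so that $(M_r,Q_r)$ is not a summand of $(N,R)=\Psi^{-1}(\Fac N)$ tells you nothing about the complement $(U,Q)=\bigoplus_{i\neq r}(M_i,Q_i)$, so the ensuing appeal to Proposition \ref{prominmax}(ii) is a non sequitur (and the side remark that $\Fac M=\Fac U$ would ``contradict $(M,P)$ being $\tau$-tilting'' is wrong: $\Fac M=\Fac U$ is exactly what happens when the mutation at $r$ is a right mutation). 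Choosing $r$ so that $(U,Q)$ is a common summand of $(M,P)$ and $(N,R)$ gives, as you yourself observe, the inclusion $\Fac N\subseteq\prescript{\bot}{}{(\tau U)}\cap Q^\bot=\Fac M^\prime$, the opposite of what is claimed; moreover such an $r$ need not exist, since comparable basic $\tau$-tilting pairs need not share $n-1$ indecomposable summands --- already $(M,P)=(0,A)$ and $(N,R)=(A,0)$ satisfy $\Fac M=0\subsetneqq\mod A=\Fac N$ while sharing no summand at all when $n\geq 2$. What is genuinely needed, and never supplied, is a direction $r$ for which the mutation is a right mutation \emph{and} $\prescript{\bot}{}{(\tau U)}\cap Q^\bot\subseteq\Fac N$; producing such an $r$ is precisely the content of \cite{air}*{Theorem 2.35}, whose proof uses approximation/reduction arguments that your proposal does not reproduce.

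The final fallback --- iterate right mutations and ``stop when $\Fac M^{(k)}\subseteq\Fac N$'' --- does not prove the statement either, for three reasons. First, the lemma asserts the existence of a single right mutation of $(M,P)$ itself, so a chain of mutations is beside the point unless the conclusion is transferred back to the first step, which you do not do (and note $\Fac M\subseteq\Fac N$ already holds at the start, so the stopping rule is ill-posed). Second, the ``termination-and-non-overshoot bookkeeping'' you defer is exactly the mathematical content of the lemma, not a routine verification. Third, the proposed selection criterion --- an index $r$ with $\Hom_A(M^{(k)}_r,N)\neq 0$ and $M^{(k)}_r\notin\Fac N$ --- is vacuous: every direct summand $M_r$ of $M$ lies in $\add M\subseteq\Fac M\subseteq\Fac N$, so no such $r$ exists. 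In short, the proposal correctly assembles the relevant tools (Theorem \ref{thmair}, Proposition \ref{prominmax}, Lemma \ref{lemright}) but never closes the one gap that matters, and the statement should, as in the paper, be used as a citation to \cite{air} unless that missing step is supplied.
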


\begin{theorem}\label{thm-tau}
Let $(U,Q)$ be a basic $\tau$-rigid pair and $(M,P)=\bigoplus\limits_{i=1}^n(M_i,Q_i)$ a basic $\tau$-tilting pair in $\mod A$. Then $(M,P)$ is the Bongartz completion of $(U,Q)$ if and only if the following two conditions hold.
\begin{itemize}
\item[(a)] $(U,Q)$ is a direct summand of $(M,P)$, say $(U,Q)=\bigoplus\limits_{i=1}^s(M_i,Q_i)$, where $s\leq n$;
\item[(b)] The $c$-vectors in the $C$-matrix $C_{(M,P)}$ indexed by $s+1,\cdots,n$ are non-negative vectors.
\end{itemize}
\end{theorem}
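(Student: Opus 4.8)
The plan is to characterize the Bongartz completion $\Psi^{-1}(\prescript{\bot}{}{(\tau U)}\cap Q^\bot)$ among all basic $\tau$-tilting pairs containing $(U,Q)$ by the sign condition on the last $n-s$ $c$-vectors, using Lemma \ref{lemright} (which translates non-positivity of a $c$-vector into "the mutation in that direction is a right mutation", i.e.\ it strictly enlarges $\Fac$) together with the comparison Lemma \ref{lemair} and the torsion-theoretic description in Proposition \ref{prominmax}. First I would observe that by Proposition \ref{prominmax}(ii) the basic $\tau$-tilting pairs $(M,P)$ having $(U,Q)$ as a direct summand are exactly the $\Psi^{-1}(\mathcal T)$ for functorially finite torsion classes $\mathcal T$ with $\Fac U\subseteq\mathcal T\subseteq\prescript{\bot}{}{(\tau U)}\cap Q^\bot$, and the Bongartz completion is the one with $\mathcal T=\prescript{\bot}{}{(\tau U)}\cap Q^\bot$, the unique maximal such torsion class. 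So condition (a) is automatic for the objects under consideration, and everything comes down to detecting maximality of $\Fac M$ (inside the interval) via the signs of the $c$-vectors indexed $s+1,\dots,n$.

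For the "only if" direction: assume $(M,P)$ is the Bongartz completion, so $\Fac M=\prescript{\bot}{}{(\tau U)}\cap Q^\bot$ is maximal. Suppose for contradiction that some $c$-vector indexed $r\in\{s+1,\dots,n\}$ is non-positive; then by Lemma \ref{lemright} the mutation $(M',P')$ in direction $r$ is a right mutation, so $\Fac M\subsetneqq\Fac M'$. The key point I need here is that $(U,Q)$ is still a direct summand of $(M',P')$: since $r>s$, mutating in direction $r$ does not touch the summands $(M_1,Q_1),\dots,(M_s,Q_s)=(U,Q)$, so $(U,Q)$ is a summand of $(M',P')$, hence by Proposition \ref{prominmax}(ii) we get $\Fac M'\subseteq\prescript{\bot}{}{(\tau U)}\cap Q^\bot=\Fac M$, contradicting $\Fac M\subsetneqq\Fac M'$. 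Therefore every $c$-vector indexed $s+1,\dots,n$ is non-negative (sign-coherence, Theorem \ref{thm:signs-ci}, rules out the mixed case), giving (b). Note the $c$-vectors indexed $1,\dots,s$ need not have a fixed sign, which is why (b) only concerns the last $n-s$.

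For the "if" direction: assume $(M,P)$ satisfies (a) and (b); I must show $\Fac M=\prescript{\bot}{}{(\tau U)}\cap Q^\bot$. By Proposition \ref{prominmax}(ii), (a) gives $\Fac U\subseteq\Fac M\subseteq\prescript{\bot}{}{(\tau U)}\cap Q^\bot$, so it suffices to exclude strict inclusion $\Fac M\subsetneqq\prescript{\bot}{}{(\tau U)}\cap Q^\bot$. Let $(N,R)=\Psi^{-1}(\prescript{\bot}{}{(\tau U)}\cap Q^\bot)$ be the Bongartz completion; if $\Fac M\subsetneqq\Fac N$, then by Lemma \ref{lemair} there is a right mutation $(M'',P'')$ of $(M,P)$ with $\Fac M''\subseteq\Fac N$. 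Say this mutation is in direction $r$; by Lemma \ref{lemright} the $r$-th $c$-vector of $C_{(M,P)}$ is non-positive, so by (b) we must have $r\le s$, i.e.\ the mutation is in one of the directions of the summand $(U,Q)$. But then, as in Theorem \ref{thmair}, the $\Fac$ of the mutated pair is the other member of $\{\Fac U,\prescript{\bot}{}{(\tau U')}\cap\cdots\}$ for the corresponding almost-complete pair, and the right-mutation direction strictly increases $\Fac$ past a point that no longer contains $(U,Q)$ as a summand — more precisely, $(U,Q)$ is no longer a summand of $(M'',P'')$, yet $\Fac M''\subseteq\Fac N\subseteq\prescript{\bot}{}{(\tau U)}\cap Q^\bot$. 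I then derive a contradiction: since $(M'',P'')$ is a $\tau$-tilting pair whose torsion class is squeezed between $\Fac U$ (lost one summand, so need to re-examine) and $\prescript{\bot}{}{(\tau U)}\cap Q^\bot$, applying Proposition \ref{prominmax}(ii) to the smaller $\tau$-rigid pair $(U,Q)$ forces $(U,Q)$ back as a summand, contradiction — alternatively, and more cleanly, I iterate: among all right mutations reachable from $(M,P)$ staying $\leq\Fac N$, at least one must occur in a direction $>s$ (because to reach the strictly larger $\Fac N$ one eventually cannot stay within the $s$ "frozen" directions, since those can only be mutated finitely before leaving the interval), and that direction gives a non-positive $c$-vector indexed $>s$, contradicting (b).

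The main obstacle is the last step of the "if" direction: turning "$\Fac M\subsetneqq\Fac N$" into the existence of a right mutation \emph{in a direction $>s$}, since Lemma \ref{lemair} only produces \emph{some} right mutation. The cleanest route is probably to argue directly with $C$-matrices and the tropical duality of Theorem \ref{thmcg}: if $\Fac M\subsetneqq\Fac N$ then $(M,P)$ and $(N,R)$ are distinct, and comparing the $G$-matrices $G_{(M,P)}$ and $G_{(N,R)}$ — both of which have the columns $\mathbf g(M_i,Q_i)$, $i\le s$, in common — the change-of-basis matrix $G_{(N,R)}^{-1}G_{(M,P)}$ fixes the first $s$ coordinate directions and its relevant rows encode precisely the signs of the $c$-vectors of $(M,P)$ expressed in the basis of $(N,R)$; a semistability/stability argument (the brick $B_r$ of Theorem \ref{thmtref} and the Remark following it) then shows at least one $c$-vector indexed $>s$ must be non-positive, contradicting (b). I expect this reformulation via $G$-matrices and bricks — rather than a brute-force induction on mutation sequences — to be the decisive idea, and the rest to be routine bookkeeping with Proposition \ref{prominmax} and Lemma \ref{lemright}.
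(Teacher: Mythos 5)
Your proof is correct and is essentially the paper's proof: the ``only if'' direction is verbatim the same (non-positive $c$-vector at an index $>s$ gives a right mutation that still contains $(U,Q)$, and Proposition \ref{prominmax}(ii) then contradicts maximality of $\Fac M$), and the first contradiction you sketch in the ``if'' direction is exactly the paper's argument, merely run in the opposite order: the paper uses Proposition \ref{prominmax}(ii) to show $(U,Q)$ is a summand of the right mutation $(M'',P'')$, deduces that the mutation direction is $\geq s+1$, and contradicts (b) via Lemma \ref{lemright}; you deduce from (b) and Lemma \ref{lemright} that the direction is $\leq s$ and contradict the summand property. The ``main obstacle'' you name at the end is not an obstacle, and the $G$-matrix/brick alternative is unnecessary: you never need to produce a right mutation in a direction $>s$. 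Your parenthetical worry about the squeeze also dissolves immediately, since $\Fac U\subseteq\Fac M\subsetneqq\Fac M''\subseteq\Fac N=\prescript{\bot}{}{(\tau U)}\cap Q^\bot$ holds regardless of which summands $(M'',P'')$ has ($\Fac U\subseteq\Fac M$ from (a) and Proposition \ref{prominmax}, and $\Fac M\subsetneqq\Fac M''$ by the definition of right mutation); Proposition \ref{prominmax}(ii) then forces $(U,Q)$ to be a direct summand of $(M'',P'')$, while mutating in a direction $r\leq s$ removes the indecomposable summand $(M_r,Q_r)$ of $(U,Q)$ from the basic pair, which is the desired contradiction. So your first argument is complete as stated, and the iterative variant and the semistability detour can be discarded.
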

\begin{proof}
$\Longrightarrow$: Suppose that $(M,P)$ is the Bongartz completion of $(U,Q)$, then we have $\Fac M=\prescript{\bot}{}({\tau U})\cap Q^\bot$. Then by Proposition \ref{prominmax}, we know that $(U,Q)$ is a direct summand of $(M,P)$, say $(U,Q)=\bigoplus\limits_{i=1}^s(M_i,Q_i)$, where $s\leq n$. Thus the statement (a) holds.

Now we prove the statement (b). By Theorem \ref{thmtref}, we know that for each $1\leq r\leq n$, there exists a brick $B_r\in\mod A$ such that the $r$th column vector ${\bf c}_r$ of $C_{(M,P)}$ is given by $[B_r]$ or $-[B_r]$, where $[B_r]$ denotes the dimensional vector of $B_r$.
Assume by contradiction that there exists some $s+1\leq k\leq n$ such that ${\bf c}_k\notin \mathbb N^n$, then we must have ${\bf c}_k=-[B_k]$. Let $(M^\prime,P^\prime)$ be the mutation of $(M,P)$ in direction $k$. Since ${\bf c}_k=-[B_k]$ is a non-positive vector and by Lemma \ref{lemright}, we know that
$\prescript{\bot}{}({\tau U})\cap Q^\bot=\Fac M\subsetneqq\Fac M^\prime$. On the other hand, by $(U,Q)=\bigoplus\limits_{i=1}^s(M_i,Q_i)$ and $k\geq s+1$, we know that $(U,Q)$ is also a direct summand of $(M^\prime,P^\prime)$.  Then by Proposition \ref{prominmax}, we get $\Fac U\subseteq\Fac M^\prime\subseteq \prescript{\bot}{}({\tau U})\cap Q^\bot$. This contradicts $\prescript{\bot}{}({\tau U})\cap Q^\bot=\Fac M\subsetneqq\Fac M^\prime$. Thus the $c$-vectors in the $C$-matrix $C_{(M,P)}$ indexed by $s+1,\cdots,n$ are non-negative vectors, that is, the statement (b) holds.

$\Longleftarrow$:
By (a) and Proposition \ref{prominmax}, we get $\Fac U\subseteq\Fac M\subseteq \prescript{\bot}{}({\tau U})\cap Q^\bot$.

Now assume by contradiction that $(M,P)$ is not the Bongartz completion of $U$, that is, $\Fac M\neq \prescript{\bot}{}({\tau U})\cap Q^\bot$. Then we have $\Fac M\subsetneqq\prescript{\bot}{}({\tau U})\cap Q^\bot$. By Lemma \ref{lemair}, there exists a right mutation $(M^\prime,P^\prime)$ of $(M,P)$ such that $$\Fac U\subseteq \Fac M\subsetneqq \Fac M^\prime\subseteq \prescript{\bot}{}({\tau U})\cap Q^\bot.$$  By Proposition \ref{prominmax}, we know that $(U,Q)=\bigoplus\limits_{i=1}^s(M_i,Q_i)$ is a direct summand of $(M^\prime,P^\prime)$.

Suppose the mutation from $(M,P)$ to $(M^\prime,P^\prime)$ is in direction $p$. Because $(U,Q)=\bigoplus\limits_{i=1}^s(M_i,Q_i)$ is a common direct summand of $(M,P)$ and $(M^\prime,P^\prime)$, we know that  $p\notin\{1,\cdots,s\}$. Thus $p\geq s+1$.  Since $(M^\prime,P^\prime)$ is a right mutation of $(M,P)$ in direction $p$ and by Lemma \ref{lemright}, we know that the $p$th column vector of $C_{(M,P)}$ is a non-positive vector. Thus we find a $p\geq s+1$ such that the $p$th column vector of $C_{(M,P)}$ is a non-positive vector, which contradicts (b).
Therefore, $\Fac M=\prescript{\bot}{}({\tau U})\cap Q^\bot$ and $(M,P)$ is the Bongartz completion of $(U,Q)$.
\end{proof}

By the dual arguments, we can show the following result.
\begin{theorem} \label{thmco-completion}
Let $(U,Q)$ be a basic $\tau$-rigid pair and $(M,P)=\bigoplus\limits_{i=1}^n(M_i,Q_i)$ a basic $\tau$-tilting pair in $\mod A$. Then $(M,P)$ is the Bongartz co-completion of $(U,Q)$ if and only if the following two conditions hold.
\begin{itemize}
\item[(a)] $(U,Q)$ is a direct summand of $(M,P)$, say $(U,Q)=\bigoplus\limits_{i=1}^s(M_i,Q_i)$, where $s\leq n$;
\item[(b)] The $c$-vectors in the $C$-matrix $C_{(M,P)}$ indexed by $s+1,\cdots,n$ are non-positive vectors.
\end{itemize}
\end{theorem}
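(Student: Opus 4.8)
The plan is to dualize the proof of Theorem~\ref{thm-tau}, replacing the torsion class $\prescript{\bot}{}{(\tau U)}\cap Q^\bot$ by $\Fac U$, right mutations by left mutations, and ``non-positive'' by ``non-negative'' throughout. Two auxiliary facts are needed, each the order-dual of a lemma used for Theorem~\ref{thm-tau}. First, the dual of Lemma~\ref{lemright}: for the mutation $(M^\prime,P^\prime)$ of a basic $\tau$-tilting pair $(M,P)$ in direction $r$, the following are equivalent: $(M^\prime,P^\prime)$ is a left mutation of $(M,P)$; $\Fac M^\prime\subsetneqq\Fac M$; the $r$-th column of $C_{(M,P)}$ is a non-negative vector. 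This follows from Lemma~\ref{lemright} together with the facts that every mutation is either a left or a right mutation (Theorem~\ref{thmair}) and that every $c$-vector is a nonzero sign-coherent vector, so being ``not non-positive'' is the same as being non-negative (Theorems~\ref{thmtref}(ii) and~\ref{thm:signs-ci}). Second, the dual of Lemma~\ref{lemair}: if $(M,P)$ and $(N,R)$ are basic $\tau$-tilting pairs with $\Fac N\subsetneqq\Fac M$, then there is a left mutation $(M^\prime,P^\prime)$ of $(M,P)$ with $\Fac N\subseteq\Fac M^\prime$; this is again a consequence of \cite{air}*{Theorem 2.35}.

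Granting these, the argument runs exactly as for Theorem~\ref{thm-tau}. For the forward implication, if $(M,P)$ is the Bongartz co-completion of $(U,Q)$ then $(M,P)=\Psi^{-1}(\Fac U)$, so $\Fac M=\Fac U$; since $\Fac U\subseteq\prescript{\bot}{}{(\tau U)}\cap Q^\bot$ always holds, Proposition~\ref{prominmax}(ii) applied with $\mathcal T=\Fac U$ shows that $(U,Q)$ is a direct summand of $(M,P)$, say $(U,Q)=\bigoplus\limits_{i=1}^s(M_i,Q_i)$, which is (a). For (b), suppose some column ${\bf c}_k$ of $C_{(M,P)}$ with $k\geq s+1$ is not non-positive, hence non-negative by sign-coherence; then the mutation $(M^\prime,P^\prime)$ of $(M,P)$ in direction $k$ is a left mutation by the dual of Lemma~\ref{lemright}, so $\Fac M^\prime\subsetneqq\Fac M=\Fac U$. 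But $(U,Q)=\bigoplus\limits_{i=1}^s(M_i,Q_i)$ is still a direct summand of $(M^\prime,P^\prime)$ because $k\geq s+1$, so Proposition~\ref{prominmax}(ii) forces $\Fac U\subseteq\Fac M^\prime$, a contradiction; hence (b) holds.

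For the reverse implication, (a) and Proposition~\ref{prominmax}(ii) give $\Fac U\subseteq\Fac M\subseteq\prescript{\bot}{}{(\tau U)}\cap Q^\bot$. Assume $\Fac M\neq\Fac U$, so $\Fac U\subsetneqq\Fac M$. By the dual of Lemma~\ref{lemair} there is a left mutation $(M^\prime,P^\prime)$ of $(M,P)$ with $\Fac U\subseteq\Fac M^\prime\subsetneqq\Fac M$, and since $\Fac U\subseteq\Fac M^\prime\subseteq\prescript{\bot}{}{(\tau U)}\cap Q^\bot$, Proposition~\ref{prominmax}(ii) shows $(U,Q)=\bigoplus\limits_{i=1}^s(M_i,Q_i)$ is a direct summand of $(M^\prime,P^\prime)$. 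As this pair is a common direct summand of $(M,P)$ and $(M^\prime,P^\prime)$, the mutation direction $p$ satisfies $p\geq s+1$, and by the dual of Lemma~\ref{lemright} the $p$-th column of $C_{(M,P)}$ is non-negative, contradicting (b). Hence $\Fac M=\Fac U$ and $(M,P)$ is the Bongartz co-completion of $(U,Q)$. The only genuinely new point compared with Theorem~\ref{thm-tau} is verifying the two dual lemmas; once these are established the rest is a verbatim dualization, so I do not expect any real obstacle.
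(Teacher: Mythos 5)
Your proposal is correct and is essentially the paper's own argument: the paper proves Theorem~\ref{thmco-completion} simply ``by the dual arguments'' to Theorem~\ref{thm-tau}, and your write-up is exactly that dualization, with the two dual lemmas justified correctly (the dual of Lemma~\ref{lemright} via Theorem~\ref{thmair}, Theorem~\ref{thmtref}(ii) and sign-coherence, and the dual of Lemma~\ref{lemair} again from \cite{air}*{Theorem 2.35}).
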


\begin{example}
Let $A$ be be a finite-dimensional $K$-algebra given by the quiver
\[\xymatrixrowsep{5mm}
\xymatrixcolsep{5mm}
\xymatrix{&2\ar[rd]^{a_1}&\\
1\ar[ru]^{a_2}&&3\ar[ll]^{a_3}}\]
with relations $a_3a_1=0, a_2a_3=0$ and $a_1a_2=0$.
This is a cluster-tilted algebra \cite{bmr07} of type $A_3$ and there are $14$ basic $\tau$-tilting pairs up to isomorphisms in $\mod A$, cf. \cite{air}*{Example 6.3}. 

By Theorem \ref{ginjection},  we can represent a $\tau$-tilting pair using its $G$-matrix. The $G$-matrices of the $14$ basic $\tau$-tilting pairs are given in the following graph. (Some edges of the graph are labeled by $\mu_1,\mu_2,\mu_3$ to indicate how do we reach a given $\tau$-tilting pair $(M,P)$ from the initial $\tau$-tilting pair $(A,0)$ when we calculate the $G$-matrix of $(M,P)$ using Keller's ``quiver mutation in Java" for the opposite quiver of the quiver of the algebra $A$.)

\begin{equation*} \xymatrixrowsep{4mm}
\xymatrixcolsep{4mm}
    \xymatrix { 
    &{\left[\begin{smallmatrix}
        1 & 0 & 0\\ 0 & 1& 1\\ 0& 0& -1
    \end{smallmatrix}\right]}\ar@{-}[rr]^{\mu_1}\ar@{-}[rd]^{\mu_2} & & {\left[\begin{smallmatrix}
        -1 & 0 & 0\\ 0 & 1& 1\\ 0& 0& -1
    \end{smallmatrix}\right]}\ar@{-}[rd]& &\\
    &&{\left[\begin{smallmatrix}
        1 & 0 & 0\\ 0 & 0 & 1\\ 0& -1& -1
    \end{smallmatrix}\right]}\ar@{-}[rr]^{\mu_1}\ar@{-}[rd] && {\left[\begin{smallmatrix}
        -1 & 0 & 0\\ 0 & 0& 1\\ 0& -1& -1
    \end{smallmatrix}\right]}\ar@{-}[rdd]&\\
   {\left[\begin{smallmatrix}
        1 & 0 & 0\\ 0 & 1& 0\\ 0& 0& 1
    \end{smallmatrix}\right]}\ar@{-}[r]^{\mu_2}\ar@{-}[ruu]^{\mu_3}\ar@{-}[rdd]^{\mu_1} &{\left[\begin{smallmatrix}
        1 & 1& 0\\ 0 & -1& 0\\ 0& 0& 1
    \end{smallmatrix}\right]}\ar@{-}[rr]^{\mu_3}\ar@{-}[rd]^{\mu_1}& & {\left[\begin{smallmatrix}
        1 & 1 & 0\\ 0 & -1 & 0\\ 0& 0& -1
    \end{smallmatrix}\right]}\ar@{-}[rd]& &\\
     && {\left[\begin{smallmatrix}
        0 & 1 & 0\\ -1 & -1& 0\\ 0& 0& 1
    \end{smallmatrix}\right]}\ar@{-}[rr]^{\mu_3}\ar@{-}[rd]&& {\left[\begin{smallmatrix}
        0 & 1 & 0\\ -1 & -1& 0\\ 0& 0& -1
    \end{smallmatrix}\right]}\ar@{-}[r]^{\mu_2}&{\left[\begin{smallmatrix}
        0 & -1 & 0\\ -1 & 0& 0\\ 0& 0& -1
    \end{smallmatrix}\right]}\\
     &{\left[\begin{smallmatrix}
        -1 & 0 & 0\\ 0 & 1& 0\\ 1& 0& 1
    \end{smallmatrix}\right]}\ar@{-}[rr]^{\mu_2}\ar@{-}[rd]^{\mu_3}& &{\left[\begin{smallmatrix}
        -1 & 0 & 0\\ 0 & -1& 0\\ 1& 0& 1
    \end{smallmatrix}\right]} \ar@{-}[rd]& &\\
    &&{\left[\begin{smallmatrix}
        -1 & 0 & -1\\ 0 & 1& 0\\ 1& 0& 0
    \end{smallmatrix}\right]}\ar@{-}[rr]^{\mu_2}\ar@{-}[ruuuuu] && {\left[\begin{smallmatrix}
        -1 & 0 & -1\\ 0 & -1& 0\\ 1& 0& 0
    \end{smallmatrix}\right]}\ar@{-}[ruu]&\\
    }
\end{equation*}


Consider the indecomposable basic $\tau$-rigid pair $(U,Q)$ whose $g$-vector is  $${\bf g}(U,Q)=\begin{bmatrix}0\\1\\-1\end{bmatrix}.$$
From the above graph, we can see there are $4$ $G$-matrices containing ${\bf g}(U,Q)$ as a column vector and they are
\begin{eqnarray}
G_1&=&\begin{bmatrix}1&0&0\\0&1&1\\0&0&-1\end{bmatrix},\;\;G_2=\begin{bmatrix}-1&0&0\\0&1&1\\0&0&-1\end{bmatrix},\nonumber\\
G_3&=&\begin{bmatrix}1&0&0\\0&0&1\\0&-1&-1\end{bmatrix},\;\;G_4=\begin{bmatrix}-1&0&0\\0&0&1\\0&-1&-1\end{bmatrix}.\nonumber
\end{eqnarray}
The corresponding $C$-matrices $C_i=(G_i^{\rm T})^{-1}$ are 
\begin{eqnarray}
C_1&=&\begin{bmatrix}1&0&0\\0&1&0\\0&1&-1\end{bmatrix},\;\;C_2=\begin{bmatrix}-1&0&0\\0&1&0\\0&1&-1\end{bmatrix},\nonumber\\
C_3&=&\begin{bmatrix}1&0&0\\0&-1&1\\0&-1&0\end{bmatrix},\;\;C_4=\begin{bmatrix}-1&0&0\\0&-1&1\\0&-1&0\end{bmatrix}.\nonumber\end{eqnarray}
Since the signs of the columns of $C_1,C_2,C_3,C_4$ not indexed by $(U,Q)$ are $(+,+),(-,+),(+,-),(-,-)$ respectively, we know that the Bongartz completion of $(U,Q)$ is the basic $\tau$-tilting pair whose $G$-matrix is $G_1$ and the Bongartz co-completion of $(U,Q)$ is the basic $\tau$-tilting pair whose $G$-matrix is $G_4$.
\end{example}

  \begin{remark}\label{rmkop} Notice that
  \cite{air}*{Theorem 2.14, Proposition 2.27} tell us there exists a bijection $(-)^\dag$ from the basic $\tau$-tilting pairs in $\mod A$ to those in $\mod A^{\rm op}$, which reverses the partial order given by
$$(M,P)\leq (M^\prime,P^\prime)\Longleftrightarrow \Fac M\subseteq\Fac M^\prime.$$
Thus the Bongartz co-completion in $\mod A$ can be studied via the Bongartz completion in $\mod A^{\rm op}$.
\end{remark}

\subsection{Bongartz completion in cluster algebras via $c$-vectors}
Inspired by Theorems \ref{thm-tau} and \ref{thmco-completion}, we give the definition of Bongartz completion and Bongartz co-completion in cluster algebras using $c$-vectors.
\begin{definition}[Bongartz completion and Bongartz co-completion]
\label{defcompletion}
Let $\Acal(B, t_0)$ be a cluster algebra with initial exchange matrix $B$ at $t_0$ and $U$ a subset of some cluster of $\Acal(B, t_0)$.
\begin{itemize}
\item[(i)] A cluster $[{\bf x}_s]$ is called the {\em Bongartz completion} of $U$ with respect to seed $\Sigma_{t_0}$ if the following two conditions hold.
\begin{itemize}
 \item [(a)] $U$ is a subset of $[\xx_s]$;
 \item[(b)] The $i$th column of the $C$-matrix $C_s^{B;t_0}$ is a non-negative vector for any $i$ such that $x_{i;s}\notin U$.
\end{itemize}
\item[(ii)] A cluster $[{\bf x}_s]$ is called the {\em Bongartz co-completion} of $U$ with respect to seed $\Sigma_{t_0}$ if the following two conditions hold.
\begin{itemize}
 \item [(a$'$)] $U$ is a subset of $[\xx_s]$;
 \item[(b$'$)] The $i$th column of the $C$-matrix $C_s^{B;t_0}$ is a non-positive vector for any $i$ such that $x_{i;s}\notin U$.
\end{itemize}
\end{itemize}
\end{definition}

\begin{remark}
Now we compare the Bongartz co-completion in this paper with that in \cite{cao}.
\begin{itemize}
    \item The input of Bongartz co-completion in this paper is an initial seed $\Sigma_{t_0}$ and a subset $U$ of some cluster and the output is a final cluster $[{\bf x}_s]$ such that 
 $U\subseteq [{\bf x}_s]$ and the columns of the $C$-matrix $C_s^{B_{t_0};t_0}$ satisfies some non-positive condition (see Definition \ref{defcompletion} (ii)).
 \item The input of Bongartz co-completion in \cite{cao} (i.e., $g$-pairs in \cite{cl2}) is a final seed $\Sigma_t$ and a subset $U$ of some cluster and the output is an initial cluster $[{\bf x}_{t^\prime}]$ such that 
$U\subseteq [{\bf x}_{t^\prime}]$ and the rows of the $G$-matrix $G_t^{B_{t^\prime};t^\prime}$ satisfies some non-negative condition (see Theorem \ref{thmgcompletion}).
\item The Bongartz completion in this paper for cluster algebra $\mathcal A(B,t_0)$ is closely related to the Bongartz co-completion in \cite{cao} (i.e., $g$-pairs in \cite{cl2}) for cluster algebra $\mathcal A(B^{\rm T},t_0)$ via tropical duality in Lemma \ref{lemcg}. We will see it in the proof of Theorem \ref{bongartz-completion}.
\end{itemize}
\end{remark}

Notice that both the existence and uniqueness of Bongartz completion and Bongartz co-completion in cluster algebras are not clear from their own definitions. Before the discussion on their existence and uniqueness, we first discuss the relationship between Bongartz completion and Bongartz co-completion.

Let $\Sigma_{t_0}=({\bf x}_{t_0},{\bf y}_{t_0},B_{t_0})$ be a seed over $\mathbb P$. Put $${\bf y}_{t_0}^{-1}:=(y_{1;t_0}^{-1},\cdots,y_{n;t_0}^{-1}).$$
We call the new seed $\Sigma_{t_0}^{\rm op}:=({\bf x}_{t_0},{\bf y}_{t_0}^{-1}, -B_{t_0})$  the {\em opposite seed} of $\Sigma_{t_0}$.
It can be checked that $(\mu_k(\Sigma_{t_0}))^{\rm op}=\mu_k(\Sigma_{t_0}^{\rm op})$. Thus the cluster algebra $\mathcal A(\Sigma_{t_0})$ given by the seed $\Sigma_{t_0}$ and the cluster algebra $\mathcal A(\Sigma_{t_0}^{\rm op})$ given by the opposite seed $\Sigma_{t_0}^{\rm op}$ have the same set of cluster variables and the same set of clusters.

\begin{proposition}\label{prop:BcBcc}
Let $\Acal(\Sigma_{t_0})$ be a cluster algebra with initial seed $\Sigma_{t_0}$ and $\mathcal A(\Sigma_{t_0}^{\rm op})$ the cluster algebra with initial seed $\Sigma_{t_0}^{\rm op}$. Let $U$ be a subset of some cluster of $\Acal(\Sigma_{t_0})$ and suppose there exists a seed $\Sigma_{t_1}=\overset{\leftarrow}{\mu}(\Sigma_{t_0})$ of $\Acal(\Sigma_{t_0})$ such that
 $$\begin{bmatrix}B_{t_1}\\-I_n\end{bmatrix}=\overset{\leftarrow}{\mu}\begin{bmatrix}B_{t_0}\\ I_n\end{bmatrix}.$$
 Then a cluster $[{\bf x}_s]$ is the Bongartz completion of $U$ with respect to $\Sigma_{t_0}$ in $\Acal(\Sigma_{t_0})$ if and only if
 it is the Bongartz co-completion of $U$ with respect to $\Sigma_{t_1}^{\rm op}$ in $\mathcal A(\Sigma_{t_0}^{\rm op})$.
\end{proposition}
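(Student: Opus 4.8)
The plan is to translate the statement entirely into a statement about $C$-matrices, using the fact that passing to opposite seeds and mutating in the opposite algebra is governed by a clean sign rule. First I would recall that for the cluster algebra with principal coefficients at $t_0$, the $C$-matrix $C_t^{B;t_0}$ is the coefficient matrix at $t$ of $\mathcal A(\Sigma_{t_0})$ when $\Sigma_{t_0} = (\mathbf x_{t_0}, \mathbf y_{t_0}, \tilde B_{t_0})$ is chosen so that $\tilde B_{t_0} = \begin{bmatrix}B_{t_0}\\ I_n\end{bmatrix}$; the key point is that the hypothesis of the proposition says exactly that there is a seed $\Sigma_{t_1}$ reachable from $\Sigma_{t_0}$ whose extended exchange matrix is $\begin{bmatrix}B_{t_1}\\ -I_n\end{bmatrix}$. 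I would then form the opposite algebra $\mathcal A(\Sigma_{t_0}^{\mathrm{op}})$ and observe that $\Sigma_{t_1}^{\mathrm{op}}$ has extended exchange matrix $\begin{bmatrix}-B_{t_1}\\ I_n\end{bmatrix}$, so that $\mathcal A(\Sigma_{t_0}^{\mathrm{op}})$, viewed with rooted vertex $t_1$, \emph{is} the principal-coefficient algebra with initial exchange matrix $-B_{t_1}$ at $t_1$. Hence the coefficient matrix of $\mathcal A(\Sigma_{t_0}^{\mathrm{op}})$ at any seed $s$, with respect to $\Sigma_{t_1}^{\mathrm{op}}$, equals the $C$-matrix $C_s^{-B_{t_1}; t_1}$.

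The second ingredient is the comparison between $C_s^{B; t_0}$ (the $C$-matrix in $\mathcal A(\Sigma_{t_0})$, base point $t_0$) and $C_s^{-B_{t_1}; t_1}$ (the $C$-matrix in the opposite algebra, base point $t_1$). Here I would invoke the standard sign-change symmetry of $C$-matrices: the coefficient matrix of $\mathcal A(\Sigma_{t_0}^{\mathrm{op}})$ at a seed is, column by column, the negative of the corresponding coefficient matrix of $\mathcal A(\Sigma_{t_0})$ at the same seed, because the opposite seed replaces each $y_j$ by $y_j^{-1}$ and the whole cluster pattern is transported by the involution $\Sigma \mapsto \Sigma^{\mathrm{op}}$ which commutes with every $\mu_k$ (as already noted in the excerpt). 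Concretely, if $C_s$ denotes the coefficient matrix at $s$ in $\mathcal A(\Sigma_{t_0})$ (with respect to root $t_0$) and $C_s^{\mathrm{op}}$ the one in $\mathcal A(\Sigma_{t_0}^{\mathrm{op}})$ (with respect to root $t_1$), then $C_s^{\mathrm{op}} = -\,C_s$. This is the step I expect to require the most care: one must check that the base-point shift from $t_0$ to $t_1$ is harmless precisely because $\tilde B_{t_1} = \begin{bmatrix}B_{t_1}\\ -I_n\end{bmatrix}$, i.e. the coefficient matrix of $\mathcal A(\Sigma_{t_0})$ at $t_1$ is $-I_n$, which is exactly what makes $\Sigma_{t_1}^{\mathrm{op}}$ a principal-coefficient initial seed for $-B_{t_1}$ and forces the sign flip to be uniform across all seeds $s$.

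Granting $C_s^{-B_{t_1};t_1} = -\,C_s^{B;t_0}$ for every seed $s$, the equivalence in the proposition is now immediate from the two definitions in Definition \ref{defcompletion}. Indeed, $[\mathbf x_s]$ is the Bongartz completion of $U$ with respect to $\Sigma_{t_0}$ iff $U \subseteq [\mathbf x_s]$ and every column of $C_s^{B;t_0}$ indexed by an $i$ with $x_{i;s}\notin U$ is non-negative; applying the sign flip, this holds iff every such column of $C_s^{-B_{t_1};t_1}$ is non-positive, which by definition (b$'$) is precisely the assertion that $[\mathbf x_s]$ is the Bongartz co-completion of $U$ with respect to $\Sigma_{t_1}^{\mathrm{op}}$ in $\mathcal A(\Sigma_{t_0}^{\mathrm{op}})$. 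The condition $U \subseteq [\mathbf x_s]$ is the same on both sides since $\mathcal A(\Sigma_{t_0})$ and $\mathcal A(\Sigma_{t_0}^{\mathrm{op}})$ share the same set of clusters. So the proof reduces to verifying the single matrix identity $C_s^{-B_{t_1};t_1} = -\,C_s^{B;t_0}$, and everything else is bookkeeping with Definition \ref{defcompletion}.
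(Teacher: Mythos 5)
Your proposal is correct and follows essentially the same route as the paper: both arguments reduce the statement to the single identity $C_s^{-B_{t_1};t_1}=-C_s^{B_{t_0};t_0}$ (valid because the hypothesis makes the $C$-matrix at $t_1$ equal to $-I_n$) and then conclude directly from Definition \ref{defcompletion}. The only difference is packaging: the paper obtains the sign-flip by mutating extended matrices along $\overset{\leftarrow}{\nu}(\overset{\leftarrow}{\mu})^{-1}$ and using that matrix mutation commutes with global negation, while you phrase the same fact via the op-involution on the principal-coefficients pattern and the tropical interpretation of $c$-vectors, which is equivalent.
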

\begin{proof}
Let $\overset{\leftarrow}{\nu}$ be a sequence of mutations such that $\Sigma_s=\overset{\leftarrow}{\nu}(\Sigma_{t_0})$. Then we have
$$\begin{bmatrix}B_s\\ C_{s}^{B_{t_0};t_0} \end{bmatrix}=\overset{\leftarrow}{\nu}\begin{bmatrix}B_{t_0}\\ I_n\end{bmatrix}.$$
By $\begin{bmatrix}B_{t_1}\\-I_n\end{bmatrix}=\overset{\leftarrow}{\mu}\begin{bmatrix}B_{t_0}\\ I_n\end{bmatrix}$, we get
$\begin{bmatrix}B_s\\ C_{s}^{B_{t_0};t_0} \end{bmatrix}=\overset{\leftarrow}{\nu}\begin{bmatrix}B_{t_0}\\ I_n\end{bmatrix}
=\overset{\leftarrow}{\nu}(\overset{\leftarrow}{\mu})^{-1}\begin{bmatrix}B_{t_1}\\ -I_n\end{bmatrix}$.
Then by the definition of mutations, we have

$$\begin{bmatrix}-B_s\\ -C_{s}^{B_{t_0};t_0} \end{bmatrix}
=-\overset{\leftarrow}{\nu}(\overset{\leftarrow}{\mu})^{-1}\begin{bmatrix}B_{t_1}\\ -I_n\end{bmatrix}=
\overset{\leftarrow}{\nu}(\overset{\leftarrow}{\mu})^{-1}\begin{bmatrix}-B_{t_1}\\ I_n\end{bmatrix}.$$
Hence, $C_s^{-B_{t_1};t_1}=-C_s^{B_{t_0};t_0}$, where $C_s^{-B_{t_1};t_1}$ is the $C$-matrix of the seed $\Sigma_{s}^{\rm op}$ with respect to $\Sigma_{t_1}^{\rm op}$.
Then the result follows from the definition of Bongartz completion and Bongartz co-completion.
\end{proof}
\begin{remark}
The seed $\Sigma_{t_1}$ in Proposition \ref{prop:BcBcc} is analogous to the $\tau$-tilting pair $(0,A)$ for a finite dimensional basic algebra $A$. Then one can see that Proposition \ref{prop:BcBcc} is analogous to Remark \ref{rmkop}.
\end{remark}

In the sequel, we will prove that Bongartz completion in cluster algebras always exists and it is unique. However, this is not the case for Bongartz co-completion. The existence and uniqueness of Bongartz co-completion in cluster algebras will be discussed in Corollary \ref{corunique}.

\begin{lemma}\label{lembijection}
Let $\mathcal A(B,t_0)$ be a cluster algebra with initial exchange matrix $B$ at $t_0$, and $\mathcal A(B^{\rm T}, t_0)$ be a cluster algebra with initial exchange matrix $B^{\rm T}$ at $t_0$. We denote by $\Sigma_t=({\bf x}_t,{\bf y}_t, B_t)$ the seed of $\mathcal A(B, t_0)$ at $t$ and by $\Sigma^{\rm T}=({\bf x}_t^{\rm T}, {\bf y}_t^{\rm T}, B_t^{\rm T})$ the seed of $\mathcal A(B^{\rm T}, t_0)$ at $t$. Then $x_{i;t} =x_{j;t'}$ if and only if $x_{i;t}^{\rm T}=x_{j;t^\prime}^{\rm T}$, where $t, t'\in\TT_n$ and $i, j\in \{1,2,\cdots, n\}$. In particular, $(-)^{\rm T}$ induces a bijection from the cluster variables of $\mathcal A(B, t_0)$ to those of $\mathcal A(B^{\rm T}, t_0)$.
\end{lemma}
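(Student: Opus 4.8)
The plan is to reduce everything to a statement about $c$-vectors and $g$-vectors, exploiting the duality between $B$ and $B^{\mathrm T}$ supplied by Theorem \ref{thmcg}. First I would recall the separation-of-additions formula (Fomin–Zelevinsky), which expresses every cluster variable $x_{i;t}$ of an arbitrary cluster algebra over $\PP$ as an evaluation, in $\PP$ and in the initial cluster variables, of the corresponding cluster variable $X_{i;t}$ of the cluster algebra with principal coefficients at $t_0$. The key point is that $X_{i;t}$, as a rational function, is determined by the combinatorial datum $(B,t_0,t,i)$ alone — concretely by its $F$-polynomial $F_{i;t}^{B;t_0}$ and its $g$-vector $\gg_{i;t}^{B;t_0}$. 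Hence $x_{i;t}=x_{j;t'}$ as elements of $\Fcal$ forces $(F_{i;t}^{B;t_0},\gg_{i;t}^{B;t_0})=(F_{j;t'}^{B;t_0},\gg_{j;t'}^{B;t_0})$, because the initial cluster variables are algebraically independent and an equality of Laurent polynomials in them reads off both the $g$-vector (as the degree under the grading of Theorem \ref{thmfziv}) and the $F$-polynomial (by specializing the initial cluster variables to $1$). Conversely, equality of these two invariants clearly forces $x_{i;t}=x_{j;t'}$. So it suffices to prove: $F_{i;t}^{B;t_0}=F_{j;t'}^{B;t_0}$ and $\gg_{i;t}^{B;t_0}=\gg_{j;t'}^{B;t_0}$ if and only if the same holds with $B$ replaced by $B^{\mathrm T}$.

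Next I would invoke the known transpose symmetries of these invariants. By Theorem \ref{thmcg} (tropical duality, from \cite{nz}), with $S$ a skew-symmetrizer of $B$ we have $S C_t^{B;t_0} S^{-1} (G_t^{B;t_0})^{\mathrm T}=I_n$; since $S$ is also a skew-symmetrizer for $B^{\mathrm T}$ (indeed $S B^{\mathrm T}=(BS)^{\mathrm T}\cdot$, and one checks $S B^{\mathrm T}$ is skew-symmetric precisely when $SB$ is), an analogous identity holds for $B^{\mathrm T}$, and comparing the two yields a precise linear relation between $G_t^{B^{\mathrm T};t_0}$ and $C_t^{B;t_0}$ — essentially $G_t^{B^{\mathrm T};t_0}$ is obtained from $C_t^{B;t_0}$ by conjugating with $S$ and transposing, up to the standard normalization. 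In particular the map $i\mapsto \gg_{i;t}^{B^{\mathrm T};t_0}$ is a fixed invertible linear reparametrization of $i\mapsto \cc_{i;t}^{B;t_0}$ depending only on $(B,t_0,t)$, so two $g$-vectors of $B^{\mathrm T}$ agree iff the corresponding $c$-vectors of $B$ agree. Combined with the dual statement (the $g$-vectors of $B$ controlling the $c$-vectors of $B^{\mathrm T}$), one gets that the pair of invariants $(F,\gg)$ for $B$ and for $B^{\mathrm T}$ determine one another; for the $F$-polynomial part I would use the standard fact that $F^{B^{\mathrm T}}$ and $F^{B}$ are related through the same data (they carry the same underlying information as the $\mathbf c$- and $\mathbf g$-vectors via the $F$-polynomial recurrences), so equality of $F$-polynomials is also transpose-symmetric.

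Having established the biconditional $x_{i;t}=x_{j;t'}\iff x_{i;t}^{\mathrm T}=x_{j;t'}^{\mathrm T}$, the final sentence follows immediately: the assignment $x_{i;t}\mapsto x_{i;t}^{\mathrm T}$ is well defined (equal inputs give equal outputs) and injective (distinct inputs give distinct outputs) on the set of cluster variables, and it is surjective because every cluster variable of $\mathcal A(B^{\mathrm T},t_0)$ is some $x_{i;t}^{\mathrm T}$; hence $(-)^{\mathrm T}$ is a bijection. I expect the main obstacle to be bookkeeping the exact form of the tropical-duality identity for $B^{\mathrm T}$ — getting the conjugation by the skew-symmetrizer and the transpose in the right places — and making sure the $F$-polynomial equality (not just the $g$-/$c$-vector equality) genuinely transfers across transposition; once the separation formula is in hand, everything else is a matter of matching invariants. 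An alternative, lower-tech route that avoids $F$-polynomials altogether is to note that Proposition \ref{proindepen} already lets us work with principal coefficients, and then cite the explicit ``transpose'' compatibility of principal-coefficient cluster patterns from the literature; I would mention this as a shortcut but carry out the invariant-matching argument as the main line.
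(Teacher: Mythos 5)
Your overall strategy (reduce to principal coefficients, encode a cluster variable by its $F$-polynomial and $g$-vector, then match invariants across transposition) founders at its central step: you never actually establish a relation between the $B^{\rm T}$-pattern and the $B$-pattern at the \emph{same} vertex $t$. Theorem \ref{thmcg} applied to $B$ and applied to $B^{\rm T}$ gives two \emph{within-pattern} identities, one linking $C_t^{B;t_0}$ with $G_t^{B;t_0}$ and one linking $C_t^{B^{\rm T};t_0}$ with $G_t^{B^{\rm T};t_0}$; ``comparing the two'' yields nothing unless you already know how $C_t^{B^{\rm T};t_0}$ (or $G_t^{B^{\rm T};t_0}$) relates to $C_t^{B;t_0}$, and that cross-pattern relation is precisely the nontrivial content you need. (Lemma \ref{lemcg} does not supply it either: it relates $C_t^{B;t_0}$ to a $G$-matrix of the pattern based at $t$ with matrix $B_t^{\rm T}$, i.e.\ it swaps the roles of $t_0$ and $t$.) A one-step check already shows the kind of identity you are implicitly assuming is false: after a single mutation $\mu_k$, the $k$-th row of $C_{t_1}^{B;t_0}$ has entries $[b_{kj}]_+$, while conjugation by a skew-symmetrizer $S$ produces $[-b_{jk}]_+$, which matches the pattern of $-B^{\rm T}=SBS^{-1}$, not of $B^{\rm T}$. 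Relatedly, your parenthetical claim that $S$ is also a skew-symmetrizer of $B^{\rm T}$ is wrong in general (the correct one is a positive rescaling of $S^{-1}$). Finally, the $F$-polynomial half is waved away as a ``standard fact''; there is no such direct fact ($F$-polynomials of $B$ and $B^{\rm T}$ genuinely differ, e.g.\ in types $B_2$ versus $C_2$), and asserting that equality of the invariant pairs is transpose-symmetric is essentially asserting the lemma itself, so the argument is circular at exactly the point where work is required. A smaller fixable issue: your claim that $x_{i;t}=x_{j;t'}$ over an arbitrary semifield forces equality of $F$-polynomials and $g$-vectors needs Proposition \ref{proindepen} first (the grading of Theorem \ref{thmfziv} is only available with principal coefficients), which you relegate to a side remark.

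For comparison, the paper's proof sidesteps invariants entirely: given $x_{i;t}=x_{j;t'}$, Theorem \ref{thmgraph} produces a mutation sequence avoiding direction $i$ that carries $\Sigma_t$ to a seed equivalent to $\Sigma_{t'}$ via a permutation $\sigma$, and the cited synchronicity-type result (\cite{nak20}, Theorem 5.20) guarantees the same mutation sequence and the same $\sigma$ produce an equivalence of the transposed seeds; tracking the unmutated index $i$ then gives $x_{i;t}^{\rm T}=x_{j;t'}^{\rm T}$. That cited theorem is exactly the cross-pattern input your sketch lacks. If you want to salvage an invariant-theoretic route, you would need to either cite such a $B$-versus-$B^{\rm T}$ duality/synchronicity statement explicitly, or prove the cross-pattern relation between the $C$- (or $G$-) matrices of the two patterns yourself, and additionally justify that cluster variables are determined by the invariants you choose to compare.
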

\begin{proof}
$\Longrightarrow$: Suppose $x_{i;t}=x_{j;t^\prime}$, then by Theorem \ref{thmgraph}, there exists a sequence $\mu_{k_\ell}\cdots\mu_{k_1}$ of mutations with $i\neq k_1,\cdots,k_\ell$ such that the two seeds
$\Sigma_{t^\prime}$ and $$\Sigma_s:=\mu_{k_\ell}\cdots\mu_{k_1}(\Sigma_t)$$
are equivalent via a permutation $\sigma$. By \cite{nak20}*{Theorem 5.20}, the two seeds $\Sigma_{t^\prime}^{\rm T}$ and
$$\Sigma_s^{\rm T}:=\mu_{k_\ell}\cdots\mu_{k_1}(\Sigma_t^{\rm T})$$
are also equivalent via the same permutation $\sigma$. Thus we have $x_{j;t^\prime}=x_{\sigma(j);s}$ and $x_{j;t^\prime}^{\rm T}=x_{\sigma(j);s}^{\rm T}$.

By $i\neq k_1,\cdots,k_\ell$, we have $x_{i;t}=x_{i;s}$ and $x_{i;t}^{\rm T}=x_{i;s}^{\rm T}$. Since the equality $x_{i;s}=x_{i;t}=x_{j;t^\prime}=x_{\sigma(j);s}$ holds, we have $i=\sigma(j)$. Thus
$x_{i;t}^{\rm T}=x_{i;s}^{\rm T}=x_{\sigma(j);s}^{\rm T}=x_{j;t^\prime}^{\rm T}$.

$\Longleftarrow:$ Similarly, we can show that if $x_{i;t}^{\rm T}=x_{j;t^\prime}^{\rm T}$, then $x_{i;t}=x_{j;t^\prime}$.
\end{proof}

\begin{lemma}[{\cite{nz}*{(1.13)}}]\label{lemcg} In a cluster algebra, for any $t_0$,$t\in\TT_n$, we have
$$
    (C_t^{B_{t_0};t_0})^{\rm T}=G_{t_0}^{B_t^{\rm T};t}.
$$
\end{lemma}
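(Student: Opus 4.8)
The plan is first to note that this identity is precisely \cite{nz}*{(1.13)}, so one legitimate route is simply to quote it. If instead a self-contained derivation is wanted, I would argue by induction on the distance $d=d(t_0,t)$ between $t_0$ and $t$ in the tree $\TT_n$. The base case $d=0$ is immediate: then $t=t_0$, and $C_{t_0}^{B_{t_0};t_0}=I_n$ since the coefficient matrix at the initial vertex of a principal-coefficients cluster algebra is the identity, while $G_{t_0}^{B_{t_0}^{\rm T};t_0}=I_n$ since the $G$-matrix at its own initial vertex is always the identity; hence both sides equal $I_n$.

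For the inductive step I would take the last edge of the geodesic from $t_0$ to $t$ in $\TT_n$, say the one joining $t'$ to $t$ and labelled $k$, so that $t=\mu_k(t')$ and $d(t_0,t')=d-1$, and first record the elementary fact $B_t^{\rm T}=\mu_k(B_{t'}^{\rm T})$ (a direct check with \eqref{eq:matrix-mutation}). Then I would compare how each side moves from $t'$ to $t$. The left-hand side changes by a \emph{forward} mutation of a $C$-pattern: $C_t^{B_{t_0};t_0}$ is the lower half of $\mu_k\!\begin{pmatrix}B_{t'}\\ C_{t'}^{B_{t_0};t_0}\end{pmatrix}$, and by sign-coherence (Theorem \ref{thm:signs-ci}) this rewrites, after splitting the $[\,\cdot\,]_+$ terms according to the sign $\varepsilon$ of the $k$-th column of $C_{t'}^{B_{t_0};t_0}$, as an explicit expression in $B_{t'}$ and $C_{t'}^{B_{t_0};t_0}$. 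The right-hand side changes by moving the \emph{base point} of a $G$-pattern from $t'$ to $\mu_k(t')$; since $B_t^{\rm T}=\mu_k(B_{t'}^{\rm T})$, the matrix $G_{t_0}^{B_t^{\rm T};t}$ is obtained from $G_{t_0}^{B_{t'}^{\rm T};t'}$ by the standard base-change recursion for $G$-matrices under one mutation of the initial seed, which is likewise an explicit sign-split expression. Substituting the induction hypothesis $(C_{t'}^{B_{t_0};t_0})^{\rm T}=G_{t_0}^{B_{t'}^{\rm T};t'}$ and comparing the two expressions entry by entry should yield $(C_t^{B_{t_0};t_0})^{\rm T}=G_{t_0}^{B_t^{\rm T};t}$.

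The step I expect to be the real obstacle is making the two sign-splittings compatible: one must check that the sign $\varepsilon$ controlling the $k$-th column of $C_{t'}^{B_{t_0};t_0}$ is the same sign that controls the $k$-th direction in the $G$-matrix base-change recursion applied to $G_{t_0}^{B_{t'}^{\rm T};t'}$. This is exactly where the tropical duality of Theorem \ref{thmcg}, namely $SC_{t'}^{B_{t_0};t_0}S^{-1}(G_{t'}^{B_{t_0};t_0})^{\rm T}=I_n$, enters, as it ties the column-sign pattern of $C_{t'}^{B_{t_0};t_0}$ to the row-sign pattern of $G_{t'}^{B_{t_0};t_0}$, which is precisely the compatibility needed. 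Once the signs are matched the rest is routine, amounting to manipulations of identities such as $[a]_+b+a[-b]_+=[b]_+a+b[-a]_+$, and the induction closes.
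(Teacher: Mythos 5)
The paper offers no proof of this lemma at all: it is quoted verbatim as Nakanishi--Zelevinsky's identity (1.13), so your first route of simply citing that result is exactly what the paper does. Your back-up induction sketch is in any case essentially the argument used in \cite{nz} itself (a mutation-by-mutation induction whose sign bookkeeping rests on sign-coherence, with the induction hypothesis itself matching the column sign of $C_{t'}^{B_{t_0};t_0}$ to the row sign of $G_{t_0}^{B_{t'}^{\rm T};t'}$), so nothing further is needed here.
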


\begin{theorem}\label{bongartz-completion}
Let $\mathcal A(B, t_0)$ be a cluster algebra with initial exchange matrix $B$ at $t_0$. Then for any subset $U$ of a cluster $[{\bf x}_t]$, there exists a unique cluster $[{\bf x}_s]$ such that $[{\bf x}_s]$ is the Bongartz completion of $U$ with respect to $t_0$.
\end{theorem}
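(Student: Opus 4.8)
The plan is to translate the question about Bongartz completion in $\mathcal A(B,t_0)$, which is phrased in terms of $C$-matrices, into a question about $g$-pairs in the companion algebra $\mathcal A(B^{\mathrm T},t_0)$, where the enough $g$-pairs property (Theorem \ref{thmgpair}) already gives existence and uniqueness. The bridge is Lemma \ref{lemcg}, which says $(C_t^{B_{t_0};t_0})^{\mathrm T}=G_{t_0}^{B_t^{\mathrm T};t}$, together with the reformulation of $g$-pairs via $G$-matrices in Theorem \ref{thmgcompletion}, and the dictionary between cluster variables of $\mathcal A(B,t_0)$ and $\mathcal A(B^{\mathrm T},t_0)$ supplied by Lemma \ref{lembijection}.

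Concretely, I would proceed as follows. First, let $U\subseteq[\xx_t]$, and transport $U$ to a subset $U^{\mathrm T}$ of $[\xx_t^{\mathrm T}]$ in $\mathcal A(B^{\mathrm T},t_0)$ using the bijection of Lemma \ref{lembijection}. Since in $\mathcal A(B^{\mathrm T},t_0)$ we may regard $t$ as the rooted vertex, apply Theorem \ref{thmgpair} to the subset $U^{\mathrm T}\subseteq[\xx_t^{\mathrm T}]$ and the cluster $[\xx_{t_0}^{\mathrm T}]$: there is a unique cluster $[\xx_s^{\mathrm T}]$ of $\mathcal A(B^{\mathrm T},t_0)$ such that $([\xx_{t_0}^{\mathrm T}],[\xx_s^{\mathrm T}])$ is a $g$-pair associated with $U^{\mathrm T}$ (note the order of the pair: $t_0$ plays the role of $[\xx_t]$ in Definition \ref{def:g-pair}, and $s$ the role of $[\xx_{t'}]$). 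By Theorem \ref{thmgcompletion} applied in $\mathcal A(B^{\mathrm T},t_0)$ with rooted vertex $t$, this is equivalent to: $U^{\mathrm T}\subseteq[\xx_s^{\mathrm T}]$ and the $i$-th row of $G_{t_0}^{B_s^{\mathrm T};s}$ is non-negative for every $i$ with $x_{i;s}^{\mathrm T}\notin U^{\mathrm T}$. Here I should double-check which rooted-vertex normalization of Theorem \ref{thmgcompletion} is the convenient one; the point is to arrange that the $G$-matrix appearing is $G_{t_0}^{B_s^{\mathrm T};s}$ so that Lemma \ref{lemcg} applies.

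Next comes the translation step: by Lemma \ref{lemcg} (applied with the pair of vertices $s,t_0$ in the algebra with initial exchange matrix $B$, so that $B_s^{\mathrm T}$ is the relevant transpose), we have $G_{t_0}^{B_s^{\mathrm T};s}=(C_s^{B_{t_0};t_0})^{\mathrm T}$, hence the $i$-th row of $G_{t_0}^{B_s^{\mathrm T};s}$ being non-negative is the same as the $i$-th column of $C_s^{B_{t_0};t_0}$ being non-negative. Combined with the fact (Lemma \ref{lembijection}) that $x_{i;s}^{\mathrm T}\notin U^{\mathrm T}$ if and only if $x_{i;s}\notin U$, and that $U\subseteq[\xx_s]$ iff $U^{\mathrm T}\subseteq[\xx_s^{\mathrm T}]$, the $g$-pair condition for $U^{\mathrm T}$ becomes precisely conditions (a) and (b) of Definition \ref{defcompletion}(i) for $U$ in $\mathcal A(B,t_0)$. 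Therefore $[\xx_s]$ is a Bongartz completion of $U$ with respect to $t_0$, and its uniqueness follows from the uniqueness in Theorem \ref{thmgpair} together with the injectivity of the dictionary $(-)^{\mathrm T}$ on clusters.

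The main obstacle I anticipate is bookkeeping of the vertex roles and of which matrix is "$B$" versus "$B^{\mathrm T}$" in each invocation: Theorem \ref{thmgpair} and Theorem \ref{thmgcompletion} are stated with a fixed initial (rooted) seed, whereas here I need to use $t$ (or $t_0$) as the root in the transposed algebra, and Lemma \ref{lemcg} relates $C$ at one root to $G$ at the other. I would be careful to state explicitly, at the start of the proof, that I fix $t$ as the rooted vertex of $\TT_n$ when working in $\mathcal A(B^{\mathrm T},t)$, so that all four cited results line up; once the indices are pinned down the argument is a short chain of equivalences with no real computation. A minor secondary point is to confirm that the enough $g$-pairs property, originally stated for principal coefficients, applies here — but the remark after Theorem \ref{thmgpair} already grants this, since everything in sight depends only on $G$- and $C$-matrices.
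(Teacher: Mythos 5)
Your proposal is correct and follows essentially the same route as the paper: transport $U$ to $U^{\mathrm T}$ in $\mathcal A(B^{\mathrm T},t_0)$ via Lemma \ref{lembijection}, identify the Bongartz-completion conditions with the $g$-pair conditions for $([\xx_{t_0}^{\mathrm T}],[\xx_s^{\mathrm T}])$ using Theorem \ref{thmgcompletion} and Lemma \ref{lemcg}, and then invoke the enough $g$-pairs property (Theorem \ref{thmgpair}) for existence and uniqueness. Your explicit re-rooting at $t$ so that $U^{\mathrm T}$ lies in the initial cluster is exactly the bookkeeping the paper leaves implicit, and it is handled correctly.
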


\begin{proof}
Let $\mathcal A(B^{\rm T}, t_0)$ be a cluster algebra with initial exchange matrix $B^{\rm T}$ at $t_0$. Let $U^{\rm T}$ be the image of $U$ under the bijection in Lemma \ref{lembijection}, which is a subset of $[{\bf x}_t^\mathrm{T}]$, by $U\subseteq[{\bf x}_t]$.

Claim:  $[{\bf x}_s]$ is the Bongartz completion of $U$ with respect to $\Sigma_{t_0}$ in $\mathcal A(B, t_0)$ if and only if $([{\bf x}_{t_0}^{\rm T}],[{\bf x}_s^{\rm T}])$ is a $g$-pair associated with $U^{\rm T}$ in $\mathcal A(B^{\rm T}, t_0)$.

This claim follows from Lemmas \ref{lembijection} and \ref{lemcg}, which transfer the two conditions (a) and (b) in Definition \ref{defcompletion} for $\mathcal A(B, t_0)$ to the two conditions (a) and (b) in Theorem \ref{thmgcompletion} for $\mathcal A(B^{\rm T}, t_0)$.

By applying Theorem \ref{thmgpair} to cluster algebra $\mathcal A(B^{\rm T}, t_0)$, we get the existence and uniqueness of the Bongartz completions in $\mathcal A(B, t_0)$.
\end{proof}

We denote by $B_U(\Sigma_{t_0})$ the Bongartz completion of $U$ with respect to seed $\Sigma_{t_0}$.

\begin{proposition}\label{prott}
Let $\mathcal A$ be a cluster algebra and $U$ a subset of some cluster of $\mathcal A$. Suppose that $\Sigma_t$ and $\Sigma_{t^\prime}$ are two equivalent seeds of $\mathcal A$, then
$B_U(\Sigma_t)=B_U(\Sigma_{t^\prime})$. In particular, the Bongartz completion of $U$ with respect to $\Sigma_t$ only depends on $U$ and the equivalent class of the seed $\Sigma_t$.
\end{proposition}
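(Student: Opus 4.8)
The plan is to derive this from the uniqueness part of Theorem~\ref{bongartz-completion}. Write $t'=\sigma(t)$ for a permutation $\sigma$, so $B_{t'}=\sigma B_t$. By Definition~\ref{defcompletion}, whether a cluster $[\xx_s]$ is the Bongartz completion of $U$ with respect to a rooted seed depends only on the combinatorics of the clusters of $\Acal$ (which is independent of the coefficient semifield by Proposition~\ref{proindepen}) and on the $C$-matrices $C^{B;t_0}_s$ (which are determined by the exchange matrix, the rooted vertex, and $s$). So I may compute $B_U(t)$ inside the principal-coefficient cluster algebra $\Acal_1:=\Acal(B_t,t)$ and $B_U(t')$ inside $\Acal_2:=\Acal(\sigma B_t,t')$. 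Since by Theorem~\ref{bongartz-completion} each of $B_U(t)$, $B_U(t')$ exists and is unique, it is enough to show that the cluster $[\xx_s]:=B_U(t)$, computed in $\Acal_1$, also satisfies conditions (a) and (b) of Definition~\ref{defcompletion} for $t'$, i.e. in $\Acal_2$. Condition (a), namely $U\subseteq[\xx_s]$, makes no reference to the rooted seed, so only (b) has to be transferred.

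The key step is a ``permuted-initial-seed'' companion of Proposition~\ref{procomat}. After identifying the principal coefficient variables $v_i$ of $\Acal_2$ with those of $\Acal_1$ via $v_i:=u_{\sigma(i)}$, the initial seed of $\Acal_2$ is $\sigma$ applied to the initial seed of $\Acal_1$. Iterating the compatibility $\mu_k(\sigma(-))=\sigma\,\mu_{\sigma(k)}(-)$ of permutations with mutations, together with the vertex-transitivity of $\TT_n$, then yields an automorphism $\phi$ of $\TT_n$ such that the seed of $\Acal_2$ at a vertex $r$ equals $\sigma$ applied to the seed of $\Acal_1$ at $\phi(r)$. Reading off clusters, cluster variables and coefficient tuples from this identity gives $[\xx^{\Acal_2}_r]=[\xx^{\Acal_1}_{\phi(r)}]$, $x^{\Acal_2}_{k;r}=x^{\Acal_1}_{\sigma(k);\phi(r)}$, and $(C^{\sigma B_t;t'}_{r})_{jk}=(C^{B_t;t}_{\phi(r)})_{\sigma(j)\sigma(k)}$. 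In particular the $k$-th column of $C^{\sigma B_t;t'}_{r}$ is the $\sigma(k)$-th column of $C^{B_t;t}_{\phi(r)}$ with its entries permuted by $\sigma$, so one is non-negative precisely when the other is.

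Granting this, the conclusion is quick. I would take a vertex $s$ with $B_U(t)=[\xx^{\Acal_1}_s]$ and set $r:=\phi^{-1}(s)$, so that $[\xx^{\Acal_2}_r]=[\xx^{\Acal_1}_s]\supseteq U$, which is (a) for $t'$. If $x^{\Acal_2}_{k;r}\notin U$, then $x^{\Acal_1}_{\sigma(k);s}\notin U$, so the defining property of $B_U(t)$ forces the $\sigma(k)$-th column of $C^{B_t;t}_s$ to be non-negative, whence by the displayed identity the $k$-th column of $C^{\sigma B_t;t'}_r$ is non-negative; this is (b) for $t'$. Hence $[\xx^{\Acal_1}_s]$ is the Bongartz completion of $U$ with respect to $t'$, and $B_U(t)=B_U(t')$. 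The last assertion of the proposition then follows, since every seed equivalent to $t$ is of the form $\sigma(t)$.

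I expect the only real obstacle to be the bookkeeping in the middle step: pinning down the automorphism $\phi$ of $\TT_n$ and the relabeling $v_i=u_{\sigma(i)}$ so that together they realize exactly the simultaneous permutation of rows and columns of the $C$-matrices --- the row permutation, which is absent in Proposition~\ref{procomat}, being precisely the effect of relabeling the coefficient variables. Once this identity is in place, the remainder is index-chasing plus the uniqueness already established in Theorem~\ref{bongartz-completion}.
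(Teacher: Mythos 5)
Your proof is correct and is essentially the paper's: the authors dispose of this proposition with the single line that it follows from the definition of Bongartz completion, and what you have written is precisely the equivariance verification they leave implicit — the label-permuting automorphism $\phi$ of $\mathbb{T}_n$ built from $\mu_k(\sigma(-))=\sigma\mu_{\sigma(k)}(-)$, the identity $(C^{\sigma B_t;t'}_{r})_{jk}=(C^{B_t;t}_{\phi(r)})_{\sigma(j)\sigma(k)}$, and the uniqueness from Theorem \ref{bongartz-completion}. No gaps.
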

\begin{proof}
The assertion follows from the definition of Bongartz completion.
\end{proof}

Thanks to Theorem \ref{non-labeled-cluster-thm} and Proposition \ref{prott}, we know that in a cluster algebra the Bongartz completion $B_U(\Sigma_t)$ of $U$ with respect to seed $\Sigma_t$ only depends on $U$ and $[{\bf x}_t]$. Thus we just denote $B_U[{\bf x}_t]=B_U(\Sigma_t)$.

\subsection{Commutativity of Bongartz completions}
In this subsection, we prove that Bongartz completion admits certain commutativity.
\begin{lemma}\label{lempositive}
Let $B$ be an $n\times n$ skew-symmetrizable matrix and $P$ an $m\times n$ non-negative matrix. Then for any sequence of mutations $\overset{\leftarrow}{\mu}$, we have

\begin{eqnarray}
\overset{\leftarrow}{\mu}\left(\begin{bmatrix}I_n&0\\0&P\end{bmatrix}\begin{bmatrix}B\\I_n\end{bmatrix}\right)=\overset{\leftarrow}{\mu}\left(\begin{bmatrix}B\\ P\end{bmatrix}\right)
=\begin{bmatrix}I_n&0\\0&P\end{bmatrix}\overset{\leftarrow}{\mu}\left(\begin{bmatrix}B\\ I_n\end{bmatrix}\right).\nonumber
\end{eqnarray}
\end{lemma}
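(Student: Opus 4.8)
The plan is to reduce everything to the single-step mutation rule \eqref{eq:matrix-mutation} and then induct on the length of $\overset{\leftarrow}{\mu}$. The first equality is essentially a tautology: the block product $\begin{bmatrix}I_n&0\\0&P\end{bmatrix}\begin{bmatrix}B\\I_n\end{bmatrix}$ equals $\begin{bmatrix}B\\P\end{bmatrix}$ by direct computation, so nothing needs to be proved there beyond noting it. The content is the second equality, which says that the extended $C$-part can be freely multiplied on the left by the fixed non-negative matrix $P$ and the mutation dynamics commute with this operation.

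\textbf{Key steps.} First I would set up the induction on the number of mutations in $\overset{\leftarrow}{\mu}$. The base case (empty sequence) is the block identity noted above. For the inductive step, write $\overset{\leftarrow}{\mu}=\mu_k\circ\overset{\leftarrow}{\mu}'$, and by the inductive hypothesis assume $\overset{\leftarrow}{\mu}'\left(\begin{bmatrix}B\\P\end{bmatrix}\right)=\begin{bmatrix}I_n&0\\0&P\end{bmatrix}\overset{\leftarrow}{\mu}'\left(\begin{bmatrix}B\\ I_n\end{bmatrix}\right)$. Denote the right-hand side by $\begin{bmatrix}B'\\PC'\end{bmatrix}$, where $\begin{bmatrix}B'\\C'\end{bmatrix}=\overset{\leftarrow}{\mu}'\left(\begin{bmatrix}B\\I_n\end{bmatrix}\right)$; note that $C'$ is a $C$-matrix of $B$, hence sign-coherent by Theorem \ref{thm:signs-ci}(i), and $PC'$ is the bottom block we must mutate in direction $k$. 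The crux is then the single claim: for a sign-coherent $C$-matrix $C'$ and a non-negative $P$, one has
$$
\mu_k\left(\begin{bmatrix}B'\\PC'\end{bmatrix}\right)=\begin{bmatrix}I_n&0\\0&P\end{bmatrix}\mu_k\left(\begin{bmatrix}B'\\C'\end{bmatrix}\right).
$$
To prove this I would compare entrywise using \eqref{eq:matrix-mutation}. The $B'$-block mutates identically on both sides (it does not see $P$). For the bottom block: the column $k$ entries just flip sign on both sides, so they agree. For a column $j\neq k$, row $i$ of the bottom block, the mutation formula adds a term of the shape $(\text{row }i\text{ of bottom block})_k\cdot[\pm b'_{kj}]_+$ type, more precisely the extra contribution involves $[\,(\text{$k$-th column of bottom block})_i\,]_+\, b'_{kj} + (\text{$k$-th column})_i\,[-b'_{kj}]_+$. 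Here I use that the $k$-th column $\mathbf{c}'_k$ of $C'$ is sign-coherent: either $P\mathbf{c}'_k\geq 0$ and $[\,(P\mathbf{c}'_k)_i\,]_+=(P\mathbf{c}'_k)_i=P_i\mathbf{c}'_k=\sum_\ell P_{i\ell}(\mathbf{c}'_k)_\ell=\sum_\ell P_{i\ell}[(\mathbf{c}'_k)_\ell]_+$ (since all $(\mathbf{c}'_k)_\ell\geq 0$ and $P_{i\ell}\geq 0$), or $P\mathbf{c}'_k\leq 0$ and $[-(P\mathbf{c}'_k)_i]_+=-(P\mathbf{c}'_k)_i=\sum_\ell P_{i\ell}[-(\mathbf{c}'_k)_\ell]_+$ similarly. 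In either case $[\,(P\mathbf{c}'_k)_i\,]_+\,b'_{kj}+(P\mathbf{c}'_k)_i\,[-b'_{kj}]_+=\sum_\ell P_{i\ell}\big([(\mathbf{c}'_k)_\ell]_+\,b'_{kj}+(\mathbf{c}'_k)_\ell\,[-b'_{kj}]_+\big)$, which is exactly $P$ applied to the corresponding extra term in $\mu_k\left(\begin{bmatrix}B'\\C'\end{bmatrix}\right)$. Hence the two sides agree, completing the induction.

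\textbf{Main obstacle.} The only real subtlety is the sign-coherence input: the identity $[Pv]_+ = P[v]_+$ (componentwise $[\,\cdot\,]_+$) is false for general integer vectors $v$, and the computation above goes through precisely because $v=\mathbf{c}'_k$ has all entries of one sign and $P\geq 0$. So the key is to remember to invoke Theorem \ref{thm:signs-ci} on the intermediate $C$-matrix $C'$ of $B$ rather than trying to argue for arbitrary matrices; everything else is a routine unwinding of \eqref{eq:matrix-mutation}.
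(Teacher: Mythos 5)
Your proof is correct. The paper itself gives no computation here: it simply observes that the lemma ``follows from [Cao--Li 2019, Lemma 3.1] and the sign-coherence of $C$-matrices,'' so the entire content is delegated to that citation. Your argument is essentially a self-contained unpacking of that cited lemma: induction on the length of $\overset{\leftarrow}{\mu}$, the observation that the bottom block of $\overset{\leftarrow}{\mu}'\bigl(\begin{smallmatrix}B\\ I_n\end{smallmatrix}\bigr)$ is a genuine $C$-matrix of $B$ (hence column sign-coherent by Theorem \ref{thm:signs-ci}), and the single-step identity $[(P\mathbf{c}'_k)_i]_+\,b'_{kj}+(P\mathbf{c}'_k)_i\,[-b'_{kj}]_+=\sum_\ell P_{i\ell}\bigl([(\mathbf{c}'_k)_\ell]_+\,b'_{kj}+(\mathbf{c}'_k)_\ell\,[-b'_{kj}]_+\bigr)$, which is exactly where sign-coherence and $P\geq 0$ enter (since $[Pv]_+=P[v]_+$ fails for general $v$). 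This is the same mechanism the citation relies on; what your version buys is independence from the external reference, at the cost of redoing a known computation. One tiny point worth making explicit in a write-up: mutation of $\bigl(\begin{smallmatrix}B\\ P\end{smallmatrix}\bigr)$ only changes the top block through $B$ itself (rows $k\leq n$), so the top blocks on both sides agree trivially, and the column-$k$ entries of the bottom blocks agree by linearity of negation --- both of which you noted correctly.
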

\begin{proof}
The assertion follows from \cite{cl2019}*{Lemma 3.1} and the sign-coherence of $C$-matrices.
\end{proof}

\begin{theorem}\label{thmcommu}
Let $\mathcal A$ be a cluster algebra with initial seed $\Sigma_{t_0}$ and $U=W\sqcup V$ a subset of some cluster of $\mathcal A$. Then we have
$B_U[{\bf x}_{t_0}]=B_UB_W[{\bf x}_{t_0}]=B_VB_W[{\bf x}_{t_0}]$.
\end{theorem}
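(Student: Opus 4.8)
The plan is to reduce everything to the enough $g$-pairs property of cluster algebras (Theorems \ref{thmgpair} and \ref{thmgcompletion}) together with the transpose trick already used in the proof of Theorem \ref{bongartz-completion}, since the Bongartz completion $B_{(-)}[\xx_t]$ has now been identified with a $g$-pair in the transposed cluster algebra. First I would dispose of the easy equality $B_UB_W[\xx_{t_0}] = B_VB_W[\xx_{t_0}]$: writing $s := B_W[\xx_{t_0}]$, we have $W\subseteq[\xx_s]$ and, since $U = W\sqcup V$, one has $U\subseteq[\xx_s]$ if and only if $V\subseteq[\xx_s]$ together with $W\subseteq[\xx_s]$; a short check from Definition \ref{defcompletion} shows that a cluster containing $[\xx_s]$-variables and completing $U$ (relative to $t_0$) is the same thing as one completing $V$, because the extra variables $W$ are already in place, so the sign conditions on the remaining $c$-vectors coincide. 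This would give $B_U[\xx_s] = B_V[\xx_s]$ directly once I know $B_U[\xx_s]$ is computed relative to $t_0$ and not relative to $s$; to handle this I would prove a small lemma saying that $B_U[\xx_t]$, for $U\subseteq[\xx_t]$, may equivalently be characterised by the $g$-pair condition of Theorem \ref{thmgcompletion}, whose condition (b) involves $G_t^{B_{t'};t'}$ and is manifestly base-point independent in the appropriate sense.

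The substantive equality is $B_U[\xx_{t_0}] = B_U B_W[\xx_{t_0}]$. Set $s := B_W[\xx_{t_0}]$ and $r := B_U[\xx_s]$ (completion relative to $t_0$, regarded via the $g$-pair reformulation). I want to show $r = B_U[\xx_{t_0}]$, i.e. that $[\xx_r]$ satisfies conditions (a),(b) of Definition \ref{defcompletion} with respect to $t_0$. Condition (a), $U\subseteq[\xx_r]$, is immediate. For (b) I would pass to the transposed algebra $\mathcal A(B^{\mathrm T},t_0)$ as in Theorem \ref{bongartz-completion}: the statement $[\xx_r]$ is the Bongartz completion of $U$ w.r.t.\ $t_0$ becomes the statement that $([\xx_{t_0}^{\mathrm T}],[\xx_r^{\mathrm T}])$ is a $g$-pair associated with $U^{\mathrm T}$, i.e.\ the $i$-th row of $(G_{r}^{B^{\mathrm T};t_0})^{-1}G_{t_0}^{B^{\mathrm T};t_0} = (G_r^{B^{\mathrm T};t_0})^{-1}$ is non-negative whenever $x^{\mathrm T}_{i;r}\notin U^{\mathrm T}$. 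Now I know two things in the transposed algebra: $([\xx_{t_0}^{\mathrm T}],[\xx_s^{\mathrm T}])$ is a $g$-pair associated with $W^{\mathrm T}$ (since $s = B_W[\xx_{t_0}]$), and $([\xx_s^{\mathrm T}],[\xx_r^{\mathrm T}])$ is a $g$-pair associated with $U^{\mathrm T}$ (since $r = B_U[\xx_s]$, using the base-point-independence lemma to relate completion-relative-to-$t_0$ with the $g$-pair condition based at $s$ via Theorem \ref{thmgcompletion}). The goal is then a \emph{transitivity/composition} statement for $g$-pairs: if $([\xx_{t_0}],[\xx_s])$ is a $g$-pair for $W$ and $([\xx_s],[\xx_r])$ is a $g$-pair for $U\supseteq W$, then $([\xx_{t_0}],[\xx_r])$ is a $g$-pair for $U$.

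To establish this composition I would use Lemma \ref{lempositive} (together with Theorem \ref{thmgcompletion}'s reformulation) in the following way. Working with $G$-matrices, the $g$-pair condition for $([\xx_s],[\xx_r])$ says that the rows of $(G_r^{B_s;s})^{-1}$ indexed by variables not in $U$ are non-negative; equivalently (Theorem \ref{thmgcompletion}) that the corresponding rows of $G_s^{B_r;r}$ are non-negative. Then I would factor the comparison matrix $(G_r^{B_{t_0};t_0})^{-1}G_{t_0}^{B_{t_0};t_0} = (G_r^{B_{t_0};t_0})^{-1}$ through $s$: write it as $\big((G_r^{B_{t_0};t_0})^{-1}G_s^{B_{t_0};t_0}\big)\cdot\big((G_s^{B_{t_0};t_0})^{-1}\big)$, and observe that each factor has the requisite non-negativity on the relevant rows from the two input $g$-pairs, because $W\subseteq U$ makes the index set for the first (inner) factor \emph{larger} — the rows we need non-negative for the composite are among those guaranteed by the $([\xx_s],[\xx_r])$ $g$-pair, and for the block structure to propagate correctly I invoke Lemma \ref{lempositive}, which is exactly the statement that mutation commutes with left-multiplication by a non-negative block matrix. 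The main obstacle I anticipate is precisely this bookkeeping: making sure that ``not in $U$'' versus ``not in $W$'' index sets nest correctly and that a product of two matrices, each non-negative only on a \emph{subset} of rows, is still non-negative on the intersection of those subsets — this needs the sign-coherence of $C$-matrices (hence Lemma \ref{lempositive}) rather than naive matrix positivity, since a product of partially-non-negative matrices is not in general non-negative. Once the composition lemma for $g$-pairs is in hand, translating back through Lemma \ref{lembijection} and Lemma \ref{lemcg} closes the proof, and the uniqueness half of Theorem \ref{bongartz-completion} guarantees $r = B_U[\xx_{t_0}]$ rather than merely ``a'' completion.
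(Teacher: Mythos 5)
Your overall strategy---transposing via Lemmas \ref{lembijection} and \ref{lemcg} and reducing the theorem to a transitivity statement for $g$-pairs---is a reasonable mirror image of the paper's direct $C$-matrix argument, but as written it has two genuine gaps. First, the equality $B_UB_W[{\bf x}_{t_0}]=B_VB_W[{\bf x}_{t_0}]$ is not a ``short check from Definition \ref{defcompletion}''. Write $s$ for a seed with $[{\bf x}_s]=B_W[{\bf x}_{t_0}]$ and $r$ for a seed with $[{\bf x}_r]=B_U[{\bf x}_s]$; note that both of these outer completions are taken with respect to $s$, not $t_0$ (your parenthetical ``relative to $t_0$'' has this backwards). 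Being $B_V[{\bf x}_s]$ requires non-negativity of strictly more columns of $C_r^{B_s;s}$ than being $B_U[{\bf x}_s]$ does, namely also of the columns indexed by the common variables in $W$; the sign conditions do not ``coincide''. That these extra columns are non-negative is not definitional: the paper gets it by invoking Theorem \ref{thmgraph} to choose a mutation path from $s$ to a seed equivalent to $r$ which never mutates in the $W$-directions, so that $C_r^{B_s;s}$ has a block form with an identity block in the $W$-rows and zero block below the non-$W$ part, after which sign-coherence (Theorem \ref{thm:signs-ci}) forces the $W$-indexed columns to be non-negative. Your sketch nowhere supplies this step.

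Second, the composition lemma for $g$-pairs---the real content of $B_U[{\bf x}_{t_0}]=B_UB_W[{\bf x}_{t_0}]$---is left unproved. The factorization $(G_r^{B;t_0})^{-1}=\bigl((G_r^{B;t_0})^{-1}G_s^{B;t_0}\bigr)\cdot\bigl(G_s^{B;t_0}\bigr)^{-1}$ does not close the argument: the rows of the first factor that you control may have nonzero entries in the $W$-columns, and the corresponding $W$-rows of $(G_s^{B;t_0})^{-1}$ are exactly the ones about which the $g$-pair for $W$ says nothing; this is the ``product of partially non-negative matrices'' obstacle you yourself flag, and merely citing Lemma \ref{lempositive} does not remove it. The missing mechanism (this is how the paper proceeds, on the $C$-matrix side) is again to use Theorem \ref{thmgraph} to realize the passage from $s$ to $r$ by mutations only in the non-$W$ directions, so that all relevant matrices decompose into blocks, and then to apply Lemma \ref{lempositive} to the principal block together with the non-negative matrix $P$ formed by the columns of $C_s^{B;t_0}$ not indexed by $W$ (non-negative precisely because $[{\bf x}_s]=B_W[{\bf x}_{t_0}]$); this produces the key identity $P'=PC_1'$ expressing the new $c$-vectors at $r$ through the old ones, from which the required non-negativity follows and only then does the uniqueness half of Theorem \ref{bongartz-completion} finish the proof. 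Finally, be careful with ``manifestly base-point independent'': the condition in Theorem \ref{thmgcompletion} involves no third reference vertex, but the Bongartz completion of $U$ genuinely depends on the seed it is taken with respect to---otherwise the theorem you are proving would be vacuous.
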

\begin{proof}
Let $\Sigma_{s_1}$ and $\Sigma_{s_2}$ be two seeds of $\mathcal A$ such that $[{\bf x}_{s_1}]=B_W[{\bf x}_{t_0}]$ and $[{\bf x}_{s_2}]=B_U[{\bf x}_{s_1}]$. Then we have
$$B_UB_W[{\bf x}_{t_0}]=B_U[{\bf x}_{s_1}]=[{\bf x}_{s_2}]\;\;\;\text{and}\;\;\;B_VB_W[{\bf x}_{t_0}]=B_V[{\bf x}_{s_1}].$$
Thus it suffices to prove (i) $[{\bf x}_{s_2}]=B_V[{\bf x}_{s_1}]$ and (ii) $[{\bf x}_{s_2}]=B_U[{\bf x}_{t_0}]$.

Before proving (i) and (ii), we first introduce a seed $\Sigma_{s_3}$, which will be used in the proofs of (i) and (ii). By $[{\bf x}_{s_1}]=B_W[{\bf x}_{t_0}]$, we have $W\subseteq [{\bf x}_{s_1}]$. Without loss of generality, we can assume $W=\{x_{p+1;s_1},\cdots,x_{n;s_1}\}$.
By $[{\bf x}_{s_2}]=B_U[{\bf x}_{s_1}]$, we know that $U=W\sqcup V\subseteq [{\bf x}_{s_2}]$. Thus $W$ is a common subset of $[{\bf x}_{s_1}]$ and $[{\bf x}_{s_2}]$.
 Then by Theorem \ref{thmgraph}, there exists a sequence of mutations $\overset{\leftarrow}{\mu}$ such that
 the seed $\Sigma_{s_3}:=\overset{\leftarrow}{\mu}(\Sigma_{s_1})$ is equivalent to the seed $\Sigma_{s_2}$ and the common subset $W=\{x_{p+1;s_1},\cdots,x_{n;s_1}\}$ keeps unchanged during this process.
 This means that all the mutations in $\overset{\leftarrow}{\mu}$ are in directions $1,\cdots,p$.
Thus the lower part $C_{s_3}^{s_1}$ of $\begin{bmatrix}B_{s_3}\\ C_{s_3}^{s_1}\end{bmatrix}=\overset{\leftarrow}{\mu}\begin{bmatrix}B_{s_1}\\ I_n\end{bmatrix}$ has the following form $$C_{s_3}^{s_1}=\begin{bmatrix}M&N\\0&I_{n-p}\end{bmatrix}.$$
By the sign-coherence of $C$-matrices, we know that $N$ is a non-negative matrix.

Now we begin the proofs of (i) and (ii).

(i) It is clear that $V$ is a subset of $[{\bf x}_{s_3}]=[{\bf x}_{s_2}]=B_U[{\bf x}_{s_1}]$. We only need to show that the columns of the $C$-matrix $C_{s_3}^{s_1}$ not indexed by $V$ are non-negative vectors.
By $[{\bf x}_{s_3}]=[{\bf x}_{s_2}]=B_U[{\bf x}_{s_1}]$, we know that the columns of $C_{s_3}^{s_1}$ not indexed by $U=W\sqcup V$ are non-negative vectors.
 The columns of $C_{s_3}^{s_1}$ indexed by $W=\{x_{p+1;s_1},\cdots,x_{n;s_1}\}$ are non-negative vectors since $\begin{bmatrix}N\\ I_{n-p}\end{bmatrix}$ is a non-negative matrix. Thus the columns of the $C$-matrix $C_{s_3}^{s_1}$ not indexed by $V$ are non-negative vectors. Hence, $B_V[{\bf x}_{s_1}]=[{\bf x}_{s_3}]=[{\bf x}_{s_2}]$.

(ii) By $[{\bf x}_{s_3}]=[{\bf x}_{s_2}]=B_U[{\bf x}_{s_1}]$, we have $U\subseteq [{\bf x}_{s_3}]$. We only need to show that the columns of the $C$-matrix $C_{s_3}^{t_0}$ not indexed by $U=W\sqcup V$ are non-negative vectors.

 Let ${\overset{\leftarrow}{\nu}}$ be a sequences of mutations such that $\Sigma_{s_1}={\overset{\leftarrow}{\nu}}(\Sigma_{t_0})$. We rewrite $\begin{bmatrix}B_{s_1}\\ C_{s_1}^{t_0}\end{bmatrix}=\overset{\leftarrow}{\nu}\begin{bmatrix}B_{t_0}\\ I_n\end{bmatrix}$
 as the following block matrix:
\begin{eqnarray}\label{eqnnu}
\begin{bmatrix}B_{s_1}\\ C_{s_1}^{t_0}\end{bmatrix}=\begin{bmatrix}(B_1)_{p\times p}&B_2\\B_3&(B_4)_{(n-p)\times (n-p)}\\ P_{n\times p}& Q_{n\times (n-p)}\end{bmatrix}.
\end{eqnarray}
By $[{\bf x}_{s_1}]=B_W[{\bf x}_{t_0}]$ and $W=\{x_{p+1;s_1},\cdots,x_{n;s_1}\}$, we know that $P$ is a non-negative matrix.
We rewrite $\begin{bmatrix}B_{s_3}\\ C_{s_3}^{s_1}\end{bmatrix}=\overset{\leftarrow}{\mu}\begin{bmatrix}B_{s_1}\\ I_n\end{bmatrix}$ as the following block matrix:
$$\begin{bmatrix}B_{s_3}\\ C_{s_3}^{s_1}\end{bmatrix}=\begin{bmatrix}B_1^\prime&B_2^\prime\\B_3^\prime&B_4^\prime\\ C_1^\prime&C_2^\prime\\C_3^\prime&C_4^\prime\end{bmatrix}=\overset{\leftarrow}{\mu}\begin{bmatrix}B_1&B_2\\B_3&B_4\\ I_p&0\\0&I_{n-p}\end{bmatrix}=\overset{\leftarrow}{\mu}\begin{bmatrix}B_{s_1}\\ I_n\end{bmatrix}.$$
By $[{\bf x}_{s_3}]=B_V[{\bf x}_{s_1}]$, we know that the columns of $C_{s_3}^{s_1}$ not indexed by $V$ are non-negative vectors. In particular, the columns of $C_1^\prime$ not indexed
by $V$ are non-negative vectors.

By $\Sigma_{s_3}=\overset{\leftarrow}{\mu}(\Sigma_{s_1})$, we know $\begin{bmatrix}B_{s_3}\\C_{s_3}^{t_0}\end{bmatrix}=\overset{\leftarrow}{\mu}\begin{bmatrix}B_{s_1}\\ C_{s_1}^{t_0} \end{bmatrix}$.  We rewrite it as the following block matrix:
$$\begin{bmatrix}B_{s_3}\\C_{s_3}^{t_0}\end{bmatrix}=\begin{bmatrix}B_1^\prime&B_2^\prime\\B_3^\prime&B_4^\prime\\P^\prime&Q^\prime\end{bmatrix}=\overset{\leftarrow}{\mu}\begin{bmatrix}B_1&B_2\\B_3&B_4\\ P& Q\end{bmatrix}=\overset{\leftarrow}{\mu}\begin{bmatrix}B_{s_1}\\ C_{s_1}^{t_0} \end{bmatrix}.$$

Recall that we want to show that the columns of $C_{s_3}^{t_0}$ not indexed by $U=W\sqcup V$ are non-negative vectors. Notice that the columns of $Q^\prime$ are exactly the columns of $C_{s_3}^{t_0}$ indexed by $W$. In the sequel, we just focus on the columns of $P^\prime$ not indexed by $V$.

 Since the mutations in $\overset{\leftarrow}{\mu}$ are in directions $1,\cdots,p$, we have $$\begin{bmatrix}B_1^\prime\\ C_1^\prime\end{bmatrix}=\overset{\leftarrow}{\mu}\begin{bmatrix}B_1\\ I_{p}\end{bmatrix}\;\;\;\text{and}\;\;\;\; \begin{bmatrix}B_1^\prime\\ P^\prime\end{bmatrix}=\overset{\leftarrow}{\mu}\begin{bmatrix}B_1\\ P\end{bmatrix}.$$
 Since $P$ is a non-negative matrix and by applying Lemma \ref{lempositive} to $\begin{bmatrix}B_1^\prime\\ C_1^\prime\end{bmatrix}=\overset{\leftarrow}{\mu}\begin{bmatrix}B_1\\ I_{p}\end{bmatrix}$ and
$
\begin{bmatrix}B_1^\prime\\ P^\prime\end{bmatrix}=\overset{\leftarrow}{\mu}\begin{bmatrix}B_1\\ P\end{bmatrix}
$,
we get $P^\prime=PC_1^\prime$. Because $P$ is a non-negative matrix and the columns of $C_1^\prime$ not indexed by $V$ are non-negative vectors, we get that the columns of $P^\prime$ not indexed by $V$ are non-negative vectors. Therefore, the columns of $C_{s_3}^{s_0}$ not indexed by $U=W\sqcup V$ are non-negative vectors. Hence, we get $B_U[{\bf x}_{t_0}]=[{\bf x}_{s_3}]=[{\bf x}_{s_2}]$.

Finally, by the results in (i) and (ii), we get $B_U[{\bf x}_{t_0}]=B_UB_W[{\bf x}_{t_0}]=B_VB_W[{\bf x}_{t_0}]$.
\end{proof}

It is clear that Theorem \ref{thmcommu} induces the following commutativity of Bongartz completion:
$$B_VB_W[{\bf x}_{t_0}]=B_WB_V[{\bf x}_{t_0}],$$
where $V$ and $W$ are as in Theorem \ref{thmcommu}.

\begin{remark}
Based on the commutativity of Bongartz completion, in a follow up work, we will give the explicit construction of Bongartz completion on orbifolds corresponding to cluster algebras.
\end{remark}

\section{Applications}
In this section, we give two applications of Bongartz completion in cluster algebras.
\subsection{Application to exchange quivers}

\begin{definition}[Green mutation and green-to-red sequence] Let $\mathcal A(B, t_0)$ be a cluster algebra with initial exchange matrix $B$ at $t_0$.
\begin{itemize}
\item[(i)] A seed mutation $\mu_k(\Sigma_t)$ in $\mathcal A(B, t_0)$ is called a {\em green mutation}, if the $k$th column of the $C$-matrix $C_t^{B;t_0}$ is a non-negative vector. Otherwise, it is called a {\em red mutation}.
    \item[(ii)]  Let $\overset{\leftarrow}{\mu}$ be a sequence of mutations of $B$ and denote by $\Sigma_t=\overset{\leftarrow}{\mu}(\Sigma_{t_0})$. The sequence $\overset{\leftarrow}{\mu}$ is called a {\em green-to-red sequence} of $B$, if the $C$-matrix $C_t^{B;t_0}$ is a non-positive matrix.
 \end{itemize}
\end{definition}

Thanks to Proposition \ref{procomat}, we know that green mutation is well-defined up to seed equivalence.

\begin{definition}[Exchange quiver] Let $\mathcal A(B, t_0)$ be a cluster algebra with initial exchange matrix $B$ at $t_0$.
The {\em exchange quiver} (or {\em oriented exchange graph}) of $\mathcal A(B, t_0)$ is the quiver $\overrightarrow{\Gamma}(B,t_0)$ whose vertices correspond to the seeds of $\mathcal A(B, t_0)$ up to seed equivalence and whose arrows correspond to green mutations.
\end{definition}

\begin{remark}\label{rmkquiver}
(i) By Lemma \ref{lemright}, we know that red mutations in cluster algebras are parallel to right mutations in $\tau$-tilting theory.

(ii) The exchange quiver of  a cluster algebra $\mathcal A(B,t_0)$ is parallel to the $\tau$-tilting quiver \cite{air} in $\tau$-tilting theory for a finite dimensional basic algebra $A$, which coincides with the Hasse quiver of the basic $\tau$-tilting pairs in $\mod A$ (see \cite{air}*{Corollary 2.34}).

(iii) $B$ admits a green-to-red sequence if and only if the exchange quiver of $\mathcal A(B, t_0)$ has a sink.

(iv) The $\tau$-tilting quiver of $\mod A$ has a unique source corresponding to the basic $\tau$-tilting pair $(A,0)$ and a unique sink corresponding to the basic $\tau$-tilting pair $(0,A)$. The exchange quiver of a cluster algebra $\mathcal A(B,t_0)$ has a unique source corresponding to its initial seed, however, it may have no sinks, for example, for a cluster algebra whose initial exchange matrix is given by the following Markov matrix (see \cite{pla12}*{Example 4.3} for details):
$$B=\begin{bmatrix}0&2&-2\\-2&0&2\\2&-2&0\end{bmatrix}.$$
\end{remark}

For a subset $U$ of some cluster of $\mathcal A(B, t_0)$, we denote by $\overrightarrow{\Gamma}_U(B,t_0)$ the full subquiver of $\overrightarrow{\Gamma}(B,t_0)$ whose vertices correspond to the seeds $\Sigma_t$ of $\mathcal A(B, t_0)$ satisfying $U\subseteq [{\bf x}_t]$.
In this subsection, we are devoted to study the subquiver $\overrightarrow{\Gamma}_U(B,t_0)$.

\begin{theorem}\label{thmquiver}
Let $\mathcal A=\mathcal A(B, t_0)$ be a cluster algebra with initial exchange matrix $B$ at $t_0$ and $U$ a subset of some cluster of $\mathcal A$. Let $\Sigma_s$ be a seed of $\mathcal A$ such that $[{\bf x}_s]=B_U[{\bf x}_{t_0}]$ and denote $V:=[{\bf x}_s]\setminus U$.
Then  $\overrightarrow{\Gamma}_U(B,t_0)$ is isomorphic to the exchange quiver $\overrightarrow{\Gamma}(B_s^\dag,s)$, where $B_s^\dag$ is the $V\times V$-indexed submatrix of $B_s$.
\end{theorem}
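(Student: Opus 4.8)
The plan is to construct an explicit bijection between the vertex set of $\overrightarrow{\Gamma}_U(B,t_0)$ and that of $\overrightarrow{\Gamma}(B_s^\dag,s)$, and then check that it respects green mutations. The natural candidate is the following: given a seed $t$ of $\mathcal A(B,t_0)$ with $U\subseteq[\mathbf{x}_t]$, the Bongartz completion machinery should let us compare $t$ to $s$. Write $U=\{x_{n-|U|+1;s},\dots,x_{n;s}\}$ and $V=\{x_{1;s},\dots,x_{|V|;s}\}$ after relabeling. By Theorem~\ref{thmgraph} (applied to the common subset $U$ of $[\mathbf{x}_s]$ and $[\mathbf{x}_t]$), there is a sequence of mutations $\overset{\leftarrow}{\mu}_t$ in directions $1,\dots,|V|$ only, sending $s$ to a seed equivalent to $t$; record the corresponding sequence of indices in $\{1,\dots,|V|\}$ as a seed $\bar t$ of $\mathcal A(B_s^\dag,s)$. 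The first thing I would check is that $\bar t$ is well-defined up to seed equivalence in $\mathcal A(B_s^\dag,s)$: this should follow from the restriction behaviour of mutations in directions avoiding the frozen-like block, i.e. $\begin{bmatrix}B_{\bar t}\\ C_{\bar t}^{B_s^\dag;s}\end{bmatrix}=\overset{\leftarrow}{\mu}_t\begin{bmatrix}B_s^\dag\\ I_{|V|}\end{bmatrix}$ is exactly the top-left $V$-block of $\begin{bmatrix}B_{t}\\ C_{t}^{B_s;s}\end{bmatrix}=\overset{\leftarrow}{\mu}_t\begin{bmatrix}B_s\\ I_n\end{bmatrix}$, together with Theorem~\ref{non-labeled-cluster-thm} to pass between labeled and unlabeled seeds.

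Next I would establish that $t\mapsto\bar t$ is a bijection on vertices. Injectivity: if $\bar t_1\sim\bar t_2$ then the mutation sequences agree up to permutation of $\{1,\dots,|V|\}$, hence produce equivalent seeds $t_1\sim t_2$ of $\mathcal A(B,t_0)$ by the same block-matrix comparison. Surjectivity: every seed $\bar t$ of $\mathcal A(B_s^\dag,s)$ is reached from $s$ by some mutation sequence in directions $1,\dots,|V|$, and lifting that sequence to $\mathcal A(B,t_0)$ produces a seed $t$ with $U\subseteq[\mathbf{x}_t]$, i.e. a vertex of $\overrightarrow{\Gamma}_U(B,t_0)$. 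The only subtlety is that a priori different seeds $t$ of $\mathcal A$ containing $U$ might not all be reachable from $s$ by such restricted mutations — but this is precisely what Theorem~\ref{thmgraph} guarantees, since the seeds containing $U$ form a connected subgraph and $s$ is one of them, and the connecting mutations fix $U$.

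The heart of the argument, and the step I expect to be the main obstacle, is matching the arrows, i.e. showing a mutation $\mu_k(t)$ with $U\subseteq[\mathbf{x}_{\mu_k(t)}]$ is green in $\mathcal A(B,t_0)$ (with respect to $C_t^{B;t_0}$) if and only if the corresponding mutation $\mu_k(\bar t)$ is green in $\mathcal A(B_s^\dag,s)$ (with respect to $C_{\bar t}^{B_s^\dag;s}$). Note $k\in\{1,\dots,|V|\}$ since the mutation preserves $U$. Green-ness of $\mu_k(t)$ in $\mathcal A(B,t_0)$ is a statement about the $k$-th column of $C_t^{B;t_0}$, whereas the construction of $\bar t$ naturally produces $C_{\bar t}^{B_s^\dag;s}$, a submatrix of $C_t^{B_s;s}$, so I need to relate sign-coherence of $c$-vectors at base point $t_0$ to sign-coherence at base point $s$. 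Here I would invoke that $[\mathbf{x}_s]=B_U[\mathbf{x}_{t_0}]$: by Definition~\ref{defcompletion} the columns of $C_s^{B;t_0}$ not indexed by $U$ are non-negative, and then the block-triangular change-of-base-point formula — of the same flavour as Lemma~\ref{lempositive}, writing $\begin{bmatrix}B_t\\ C_t^{B;t_0}\end{bmatrix}=\overset{\leftarrow}{\mu}_t\overset{\leftarrow}{\nu}\begin{bmatrix}B_{t_0}\\ I_n\end{bmatrix}$ where $\overset{\leftarrow}{\nu}$ sends $t_0$ to $s$ — should show the $k$-th column of $C_t^{B;t_0}$ for $k\le|V|$ is a non-negative combination (via the non-negative matrix $P$ of Theorem~\ref{thmcommu}'s proof, now the bottom block of $C_s^{B;t_0}$ restricted to $V$-columns) of columns of $C_t^{B_s;s}$, and conversely. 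Thus the sign of the $k$-th column of $C_t^{B;t_0}$ is governed by the sign of the $k$-th column of the $V$-block of $C_t^{B_s;s}$, which is exactly the $k$-th column of $C_{\bar t}^{B_s^\dag;s}$. Once this sign-matching is in place, the arrow correspondence follows, and combined with the vertex bijection we conclude $\overrightarrow{\Gamma}_U(B,t_0)\cong\overrightarrow{\Gamma}(B_s^\dag,s)$.
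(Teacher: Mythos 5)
Your proposal is correct and follows essentially the same route as the paper's proof: the vertex correspondence is the paper's map obtained by freezing the variables in $U$ (equivalently, restricting mutation sequences to the $V$-directions, justified by Theorem \ref{thmgraph}), and the arrow matching is exactly the paper's argument, using the non-negative block $P$ of $C_s^{B;t_0}$ coming from $[{\bf x}_s]=B_U[{\bf x}_{t_0}]$, the identity $P'=PC_{\bar t}^{B_s^\dag;s}$ from Lemma \ref{lempositive}, and sign-coherence of $c$-vectors.
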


\begin{proof}
 Since the claim does not depend on the choice of coefficients of cluster algebras, we can just assume that $\mathcal A$ is a geometric cluster algebra with initial seed $\Sigma_{t_0}=({\bf x}, B)$.

By $[{\bf x}_s]=B_U[{\bf x}_{t_0}]$, we know that $U\subseteq [{\bf x}_s]$. Without loss of generality, we can assume $$U=\{x_{p+1;s},\cdots,x_{n;s}\}.$$
We write $B_s$ as a block matrix $B_s=\begin{bmatrix}B_1&B_2\\B_3&B_4\end{bmatrix}$, where $B_1$ is a $p\times p$ submatrix of $B_s$. By the definition of $B_s^\dag$, we have $B_1=B_s^\dag$.

We denote by $\Sigma_s^U:=({\bf x}_s,\begin{bmatrix}B_1\\B_3 \end{bmatrix})=({\bf x}_s,\begin{bmatrix}B_s^\dag\\B_3 \end{bmatrix})$ the seed obtained from $\Sigma_s=({\bf x}_s, B_s)$ by freezing the cluster variable in $U$. Let $\mathcal A_U$ be the geometric cluster algebra with initial seed $\Sigma_s^U$ at the vertex $s$. Then $\overrightarrow{\Gamma}(B_s^\dag,s)$ can be viewed as the exchange quiver of $\mathcal A_U$.

Clearly, there exists a map $(-)^U$ from the vertices of $\overrightarrow{\Gamma}_U(B,t_0)$ to the vertices of  $\overrightarrow{\Gamma}(B_s^\dag,s)$ induced by $\Sigma_t\mapsto \Sigma_t^U$, where $\Sigma_t^U$ is the seed obtained from $\Sigma_t$ by freezing the cluster variables in $U$. By Theorem \ref{thmgraph}, we know that the map $(-)^U$ induces an isomorphism from $\overrightarrow{\Gamma}_U(B,t_0)$ to  $\overrightarrow{\Gamma}(B_s^\dag,s)$ as unoriented graphs.

It remains to show the map $(-)^U$ preserves the orientations, that is, we need to show that $\Sigma_r=\mu_k(\Sigma_t)$ is a green mutation in $\overrightarrow{\Gamma}_U(B,t_0)$
if and only if $\Sigma_r^U=\mu_k(\Sigma_t^U)$ is a green mutation in $\overrightarrow{\Gamma}(B_s^\dag,s)$. Notice that the seeds here are considered up to seed equivalence.

By $[{\bf x}_s]=B_U[{\bf x}_{t_0}]$ and $U=\{x_{p+1;s},\cdots,x_{n;s}\}$, we know that the $n\times p$ matrix $P$ appearing in the following block matrix is a non-negative matrix:
\begin{eqnarray}\label{eqnqp}
\begin{bmatrix}B_s\\C_s^{B;t_0}\end{bmatrix}=\begin{bmatrix}B_1&B_2\\B_3&B_4\\P&Q\end{bmatrix}.\nonumber
\end{eqnarray}
Let $\overleftarrow{\mu}$ be a sequence of mutations such that $\Sigma_t=\overleftarrow{\mu}(\Sigma_s)$. By Theorem \ref{thmgraph} and $$U=\{x_{p+1;s},\cdots,x_{n;s}\}\subseteq [{\bf x}_t]\cap[{\bf x}_s],$$ we can assume that all the mutations in $\overleftarrow{\mu}$ are in directions $1,\cdots,p$.

By $\Sigma_t=\overleftarrow{\mu}(\Sigma_s)$, we have $\begin{bmatrix}B_t\\C_t^{B;t_0}\end{bmatrix}=\overleftarrow{\mu}\begin{bmatrix}
B_s\\ C_s^{B;t_0}\end{bmatrix}$. We rewrite it as the following block matrix:
$$\begin{bmatrix}B_t\\C_t^{B;t_0}\end{bmatrix}=\begin{bmatrix}B_1^\prime&B_2^\prime\\B_3^\prime&B_4^\prime\\P^\prime&Q^\prime \end{bmatrix}=\overleftarrow{\mu}\begin{bmatrix}B_1&B_2\\B_3&B_4\\P&Q\end{bmatrix}=\overleftarrow{\mu}\begin{bmatrix}
B_s\\ C_s^{B;t_0}\end{bmatrix}.$$
Since the mutations in $\overleftarrow{\mu}$ are in directions $1,\cdots,p$, we have $\begin{bmatrix}B_1^\prime\\P^\prime \end{bmatrix}=\overleftarrow{\mu}\begin{bmatrix}B_1\\P \end{bmatrix}$. By $\Sigma_t^U=\overleftarrow{\mu}(\Sigma_s^U)$, we have $\begin{bmatrix}B_1^\prime\\C_t^{B_1;s}\end{bmatrix}=\overleftarrow{\mu}\begin{bmatrix}
B_1\\ I_p\end{bmatrix}$.

Since $P$ is a non-negative matrix and by applying
 Lemma \ref{lempositive} to $\begin{bmatrix}B_1^\prime\\P^\prime \end{bmatrix}=\overleftarrow{\mu}\begin{bmatrix}B_1\\P \end{bmatrix}$ and $\begin{bmatrix}B_1^\prime\\C_t^{B_1;s}\end{bmatrix}=\overleftarrow{\mu}\begin{bmatrix}
B_1\\ I_p\end{bmatrix}$, we get $P^\prime=PC_t^{B_1;s}$. Then by the sign-coherence of $C$-matrices, we know that the $k$th column of $P^\prime$ is a non-negative vector if and only if the $k$th column of $C_t^{B_1;s}$ is a non-negative vector.
This means that $\Sigma_r=\mu_k(\Sigma_t)$ is a green mutation in $\overrightarrow{\Gamma}_U(B,t_0)$
if and only if $\Sigma_r^U=\mu_k(\Sigma_t^U)$ is a green mutation in $\overrightarrow{\Gamma}(B_1,s)=\overrightarrow{\Gamma}(B_s^\dag,s)$. Thus the map $(-)^U$ from $\overrightarrow{\Gamma}_U(B,t_0)$ to $\overrightarrow{\Gamma}(B_s^\dag,s)$ preserves the orientations. Hence, the subquiver $\overrightarrow{\Gamma}_U(B,t_0)$ of $\overrightarrow{\Gamma}(B,t_0)$ is isomorphic to the  exchange quiver $\overrightarrow{\Gamma}(B_s^\dag,s)$ of the cluster algebra $\mathcal A_U$ with initial seed $\Sigma_s^U=({\bf x}_s,\begin{bmatrix}B_s^\dag\\B_3 \end{bmatrix})$ at the vertex $s$.
\end{proof}

We remark that Theorem \ref{thmquiver} is parallel to Jasso's $\tau$-tilting reduction \cite{jasso}*{Theorem 1.1} in $\tau$-tilting theory.

\begin{corollary}\label{corunique}
Keep the notations in Theorem \ref{thmquiver}. The following statements hold.
\begin{itemize}
\item[(i)] A vertex $\Sigma_t$ in $\overrightarrow{\Gamma}_U(B,t_0)\cong \overrightarrow{\Gamma}(B_s^\dag,s)$ corresponds to the Bongartz completion of $U$ with respect to $\Sigma_{t_0}$ if and only if $\Sigma_t$ is a source in  $\overrightarrow{\Gamma}_U(B,t_0)\cong \overrightarrow{\Gamma}(B_s^\dag,s)$.
    \item[(ii)] A vertex $\Sigma_t$ in $\overrightarrow{\Gamma}_U(B,t_0)\cong \overrightarrow{\Gamma}(B_s^\dag,s)$ corresponds to the Bongartz co-completion of $U$ with respect to $\Sigma_{t_0}$ if and only if $\Sigma_t$ is a sink in  $\overrightarrow{\Gamma}_U(B,t_0)\cong \overrightarrow{\Gamma}(B_s^\dag,s)$.
        \item [(iii)] If the Bongartz co-completion of $U$ with respect to $\Sigma_{t_0}$ exists, then it is unique.
\end{itemize}
\end{corollary}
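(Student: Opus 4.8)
The plan is to read off all three statements from the isomorphism $\overrightarrow{\Gamma}_U(B,t_0)\cong \overrightarrow{\Gamma}(B_s^\dag,s)$ established in Theorem \ref{thmquiver}, together with Remark \ref{rmkquiver}(iv), which identifies the unique source of any exchange quiver. First I would unwind the definitions: a vertex $t$ of $\overrightarrow{\Gamma}_U(B,t_0)$ satisfies $U\subseteq[{\bf x}_t]$, and by Definition \ref{defcompletion} such a $t$ is the Bongartz completion of $U$ with respect to $t_0$ exactly when every column of $C_t^{B;t_0}$ not indexed by $U$ is non-negative, i.e.\ when no mutation of $t$ in a direction corresponding to a variable outside $U$ is a red mutation. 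Under the identification $t\mapsto t^U$ of Theorem \ref{thmquiver}, the directions $1,\dots,p$ (those outside $U$) are precisely the directions of $\overrightarrow{\Gamma}(B_s^\dag,s)$, and the proof of Theorem \ref{thmquiver} shows the $k$-th column of $P'$ is non-negative iff the $k$-th column of $C_t^{B_1;s}$ is; hence $t$ is the Bongartz completion iff $t^U$ has no outgoing red edge, i.e.\ all edges of $\overrightarrow{\Gamma}(B_s^\dag,s)$ at $t^U$ are green and outgoing, i.e.\ $t^U$ is a source. This is statement (i); statement (ii) is the same argument with ``non-negative'' replaced by ``non-positive'' and ``source'' by ``sink.''

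For (i) I would then invoke Theorem \ref{bongartz-completion}, which guarantees the Bongartz completion exists and is unique; combined with (i) this re-proves that $\overrightarrow{\Gamma}(B_s^\dag,s)$ has a unique source, consistent with Remark \ref{rmkquiver}(iv). For (iii), suppose the Bongartz co-completion of $U$ with respect to $t_0$ exists. By (ii) it corresponds to a sink of $\overrightarrow{\Gamma}(B_s^\dag,s)$, and by Remark \ref{rmkquiver}(iii) the existence of a sink is equivalent to $B_s^\dag$ admitting a green-to-red sequence; granting existence, I would argue a sink, when it exists, is unique. One clean way: a sink of $\overrightarrow{\Gamma}(B_s^\dag,s)$ corresponds to a seed $r$ with $C_r^{B_s^\dag;s}$ a non-positive matrix, i.e.\ $\overleftarrow{\mu}$ taking $s$ to $r$ is a green-to-red sequence; by the theory of maximal green / green-to-red sequences (or directly: two such seeds $r,r'$ both have $G$-matrix with non-positive rows, hence by the order-theoretic structure of the exchange quiver — it has the unique source corresponding to $s$, and reversing all arrows gives the exchange quiver with the roles of source and sink swapped — a sink is forced to be unique), we conclude $r$ is unique, so the Bongartz co-completion is unique.

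The main obstacle I anticipate is statement (iii): uniqueness of a sink in an exchange quiver is not completely formal, since exchange quivers need not be finite and a priori one must rule out two distinct ``terminal'' seeds. The cleanest route is to translate it back to cluster algebras directly rather than to graph combinatorics: if $[{\bf x}_{s'}]$ and $[{\bf x}_{s''}]$ are both Bongartz co-completions of $U$ with respect to $t_0$, then both contain $U$ and both have all columns of the relevant $C$-matrix outside $U$ non-positive; applying the ``opposite seed'' correspondence of Proposition \ref{prop:BcBcc} — which converts Bongartz co-completion in $\mathcal A(\Sigma_{t_0}^{\rm op})$ into Bongartz completion in $\mathcal A(\Sigma_{t_0})$ via the substitution $\Sigma_{t_0}\mapsto\Sigma_{t_0}^{\rm op}$ (using the seed $\Sigma_{t_1}$ when it exists) — reduces uniqueness of co-completion to the already-proven uniqueness of completion in Theorem \ref{bongartz-completion}. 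I would present (iii) this way, noting that the hypothesis ``the Bongartz co-completion exists'' is exactly what is needed to make the reduction go through, and flag that without that hypothesis the statement can fail (as Remark \ref{rmkquiver}(iv) illustrates with the Markov matrix, where there is no sink at all).
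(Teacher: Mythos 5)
Parts (i) and (ii) of your proposal are fine and essentially coincide with the paper's (very terse) argument: under the orientation-preserving isomorphism $t\mapsto t^U$ of Theorem \ref{thmquiver}, the edges at $t$ inside $\overrightarrow{\Gamma}_U(B,t_0)$ are exactly the mutations in directions not indexed by $U$, and being a source (resp.\ sink) means all those columns of $C_t^{B;t_0}$ are non-negative (resp.\ non-positive), which is precisely Definition \ref{defcompletion}.

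For (iii), however, there is a genuine gap. The paper's proof is: a sink of $\overrightarrow{\Gamma}(B_s^\dag,s)$ is a seed $t$ with $C_t^{B_s^\dag;s}$ non-positive, and such a seed is unique up to seed equivalence by a citation to Qin; that uniqueness is a nontrivial external input, not something formal. Neither of your two routes supplies it. Your ``direct'' order-theoretic argument (unique source, ``reversing all arrows gives the exchange quiver with source and sink swapped'') is circular: the arrow-reversal statement presupposes the existence of a terminal seed whose $C$-matrix is exactly $-I$ (a reddening sequence ending at $-I$), and in any case uniqueness of the source of a possibly infinite quiver does not by itself force uniqueness of a sink. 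Your ``cleanest route'' via Proposition \ref{prop:BcBcc} also does not go through as stated: that proposition requires the existence of a seed $\Sigma_{t_1}$ with extended matrix $\begin{bmatrix}B_{t_1}\\-I_n\end{bmatrix}$, and the existence of the Bongartz co-completion of $U$ with respect to $t_0$ does not provide such a seed --- it only gives non-positivity of the columns \emph{outside} $U$ (so it cannot be applied in $\mathcal A(B,t_0)$ at all), and even after passing to the reduced algebra $\mathcal A(B_s^\dag,s)$, where the sink seed has a fully non-positive $C$-matrix, one still needs to know that a non-positive $C$-matrix is a negative permutation matrix (and that the corresponding seed is unique up to equivalence) before any such $\Sigma_{t_1}$ exists. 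That is exactly the nontrivial fact the paper imports from Qin (it can also be derived from the Nakanishi--Zelevinsky identity $C_{t_0}^{B_t;t}=(C_t^{B;t_0})^{-1}$ together with sign-coherence, but no such argument appears in your proposal). So as written, (iii) is not proved: you either need to cite the uniqueness of seeds with non-positive $C$-matrix, or supply that argument explicitly before any reduction to Theorem \ref{bongartz-completion} can work.
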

\begin{proof}
(i) and (ii) follow from the definitions of exchange quiver, Bongartz completion and Bongartz co-completion.

(iii) Let $\mathcal A_U$ be a cluster algebra with initial exchange matrix $B_s^\dag$ at vertex $s$. 
By (ii) and the assumption that the Bongartz co-completion of $U$ with respect to $\Sigma_{t_0}$ exists, we know that the exchange quiver $\overrightarrow{\Gamma}(B_s^\dag,s)$ of $\mathcal A_U$ has a sink. Namely, there exists a seed $\Sigma_t^U$ of  $\mathcal A_U$ 
such that its $C$-matrix is a non-positive matrix.  Such a seed $\Sigma_t^U$ is unique up to seed equivalence, by \cite{qin}*{Section 2.3}.
\end{proof}

\begin{proposition}[\cite{mur}*{Corollary 19}, \cite{qin17}*{Proposition 5.1.4}] \label{promu}
Let $\mathcal A=\mathcal A(B, t_0)$ be a cluster algebra with initial exchange matrix $B$ at $t_0$. If $B$ admits a green-to-red sequence, then so does any exchange matrix of $\mathcal A$.
\end{proposition}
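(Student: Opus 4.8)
The plan is to deduce the statement from Muller's result in the skew-symmetric case \cite{mur} and to check that his argument carries over to the skew-symmetrizable setting. First I would recast the hypothesis using the vocabulary of this section: by Remark \ref{rmkquiver}(iii), $B$ admits a green-to-red sequence at $t_0$ if and only if the exchange quiver $\overrightarrow{\Gamma}(B,t_0)$ has a sink, that is, if and only if some seed $t$ reachable from $t_0$ satisfies that $C_t^{B;t_0}$ is a non-positive matrix; call such a $t$ an \emph{all-red seed} for $(B,t_0)$. Since the exchange graph of $\mathcal A$ is connected and, by Proposition \ref{procomat}, greenness of a mutation depends only on the seed up to equivalence, it suffices to prove the one-step statement ``if $B_{t_0}$ admits a green-to-red sequence at $t_0$, then $B_{t_1}$ admits one at $t_1$'' for $t_1=\mu_k(t_0)$, and then to propagate it along a path in the exchange graph to reach an arbitrary exchange matrix of $\mathcal A$.

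For the one-step statement I would follow Muller's transport argument, whose two structural inputs are available here. The first is that an all-red seed has a rigid shape: if $C_t^{B;t_0}$ is non-positive, then combining the sign-coherence of $C$- and $G$-matrices (Theorem \ref{thm:signs-ci}) with the tropical duality $S\,C_t^{B;t_0}\,S^{-1}(G_t^{B;t_0})^{\mathrm T}=I_n$ (Theorem \ref{thmcg}) forces $C_t^{B;t_0}=-P_\sigma$ for a permutation matrix $P_\sigma$, and such a seed is unique up to seed equivalence by \cite{qin}*{Section 2.3} (the same uniqueness used in the proof of Corollary \ref{corunique}(iii)). The second is the controlled behaviour of $C$-matrices under a single change of initial seed from $t_0$ to $t_1=\mu_k(t_0)$. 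Given a green-to-red sequence $\overset{\leftarrow}{\mu}=\mu_{k_\ell}\cdots\mu_{k_1}$ for $B$ at $t_0$ ending at an all-red seed, one then adjusts it near its first mutation — deleting $\mu_{k_1}$ when $k_1=k$, and otherwise commuting a mutation in direction $k$ to the front — to obtain a sequence witnessing an all-red seed for $(B_{t_1},t_1)$. (Alternatively one may run the whole argument with cluster scattering diagrams, which are defined for all skew-symmetrizable $B$ by \cite{GHKK} and are manifestly independent of the chosen initial chamber.)

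The step I expect to be the main obstacle is the bookkeeping needed to make ``carries over'' honest: Muller's original proof is written for quivers, in part through the representation theory of Jacobian algebras, so one must make sure that no step secretly uses skew-symmetry or an unfolding of $B$ (which need not exist). The thing to verify is that every assertion can be rephrased purely in terms of the matrix mutation rule \eqref{eq:matrix-mutation}, sign-coherence (Theorem \ref{thm:signs-ci}), tropical duality (Theorem \ref{thmcg}), and the uniqueness of the all-red seed — each of which holds for arbitrary skew-symmetrizable $B$. Once this translation is carried out, the conclusion is immediate.
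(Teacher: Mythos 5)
First, note that the paper does not actually prove this proposition: it cites Muller and simply asserts (in the sentence preceding the statement) that his skew-symmetric proof extends to the skew-symmetrizable case. So the relevant comparison is with Muller's argument, which you set out to reproduce — and there your key step, the adjustment of the green-to-red sequence when the initial seed is moved from $t_0$ to $t_1=\mu_k(t_0)$, is wrong as stated. "Commuting a mutation in direction $k$ to the front" is not an available operation: mutations in distinct directions do not commute in general, and a green-to-red sequence need not contain a mutation in direction $k$ at all. The other half, "delete $\mu_{k_1}$ when $k_1=k$", also fails, because the $C$-matrices certifying the all-red seed are computed relative to the \emph{new} reference seed $t_1$, not $t_0$. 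Concretely, for $B=\left[\begin{smallmatrix}0&-1\\1&0\end{smallmatrix}\right]$ the sequence $(\mu_1,\mu_2)$ is green-to-red at $t_0$ (it ends with $C=-I_2$), but after deleting the initial $\mu_1$ the remaining sequence $(\mu_2)$ applied at $t_1=\mu_1(t_0)$ produces $C_{\mu_2(t_1)}^{B_{t_1};t_1}=\mathrm{diag}(1,-1)$, which is not non-positive; so your rule does not yield a green-to-red sequence for $B_{t_1}$.

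The correct transport is conjugation: if the given sequence ends at $t$ with $C_t^{B;t_0}=-P_\sigma$, then for $t_1=\mu_k(t_0)$ one takes "$\mu_k$, then the original sequence, then $\mu_{\sigma(k)}$" (in the example above this collapses to $(\mu_2,\mu_1)$, which does end at a non-positive $C$-matrix with respect to $t_1$). Proving that this conjugated sequence works is the actual content of the one-step statement; it requires either the Nakanishi--Zelevinsky change-of-initial-seed formula for $C$-matrices or a scattering-diagram argument, and it is exactly at that point that one checks no step uses skew-symmetry — the verification you defer with "once this translation is carried out, the conclusion is immediate." Your parenthetical scattering-diagram alternative is indeed a legitimate route in the skew-symmetrizable setting (the diagrams of \cite{GHKK} exist there), but as written it is only a gesture, not an argument. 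The rest of your outline — reduction to adjacent seeds, the fact that a non-positive $C$-matrix must be $-P_\sigma$, and uniqueness of the all-red seed via \cite{qin} — is sound and consistent with how the paper uses these facts elsewhere (Corollary \ref{corunique}), but the missing/incorrect one-step construction is a genuine gap.
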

\begin{remark}
The above result is proved by Muller \cite{mur} in skew-symmetric case by considering the mutation of scattering diagrams \cite{GHKK}. The proof there still works in the skew-symmetrizable case as is pointed out by Qin in \cite{qin17}*{Proposition 5.1.4}. We will give an elementary proof of above result based on the sign-coherence of $C$-matrices at the end of this subsection. 
\end{remark}

The following corollary shares the similar spirit with \cite{mur}*{Theorem 9, Theorem 17} and \cite{GARVER201876}*{Theorem 3.3}, which  concern the existence of green-to-red sequences (or its special case maximal green sequences) of certain submatrices of the exchange matrices of a cluster algebra. The following result indicates that many skew-symmetrizable matrices admit green-to-red sequences.

\begin{corollary}\label{corgreen-to-red}
Let  $\mathcal A=\mathcal A(B, t_0)$ be a cluster algebra with initial exchange matrix $B$ at $t_0$ and  $\Sigma_t$ a seed of $\mathcal A$. Let $W$ be the subset of $[{\bf x}_t]$ such that
$x_{i;t}\in W$ if and only if the $i$th column of $C_t^{B;t_0}$ is a non-positive vector. Then the $W\times W$-indexed submatrix $B_t^\dag$ of $B_t$ admits a green-to-red sequence.
\end{corollary}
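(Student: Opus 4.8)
The plan is to identify the seed $t$ (up to equivalence) with the \emph{Bongartz co-completion} of the complementary subset $U:=[\mathbf{x}_t]\setminus W$, and then extract a green-to-red sequence for $B_t^\dag$ by combining Theorem~\ref{thmquiver}, Corollary~\ref{corunique} and Proposition~\ref{promu} (via the auxiliary cluster algebra obtained by freezing $U$).

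First I would put $U:=[\mathbf{x}_t]\setminus W$, so $[\mathbf{x}_t]=U\sqcup W$, and observe that $[\mathbf{x}_t]$ is the Bongartz co-completion of $U$ with respect to $t_0$. Indeed condition (a$'$) of Definition~\ref{defcompletion}(ii) is the trivial inclusion $U\subseteq[\mathbf{x}_t]$, while condition (b$'$) requires the $i$-th column of $C_t^{B;t_0}$ to be non-positive for every $i$ with $x_{i;t}\notin U$; but $x_{i;t}\notin U$ means exactly $x_{i;t}\in W$, which by the very definition of $W$ forces that column to be non-positive.

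Next I would invoke the structure of Theorem~\ref{thmquiver}. Let $s$ be the seed with $[\mathbf{x}_s]=B_U[\mathbf{x}_{t_0}]$ (it exists and is unique by Theorem~\ref{bongartz-completion}), set $V:=[\mathbf{x}_s]\setminus U$, let $B_s^\dag$ be the $V\times V$-indexed submatrix of $B_s$, and let $\mathcal{A}_U$ be the cluster algebra with initial seed $s^U$ at $s$ obtained by freezing the variables in $U$, exactly as in the proof of Theorem~\ref{thmquiver}; then $\overrightarrow{\Gamma}(B_s^\dag,s)$ is the exchange quiver of $\mathcal{A}_U$, and Theorem~\ref{thmquiver} gives $\overrightarrow{\Gamma}_U(B,t_0)\cong\overrightarrow{\Gamma}(B_s^\dag,s)$. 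Since $U\subseteq[\mathbf{x}_t]$, the seed $t$ is a vertex of $\overrightarrow{\Gamma}_U(B,t_0)$, and as it represents the Bongartz co-completion of $U$ with respect to $t_0$, Corollary~\ref{corunique}(ii) shows that $t$ is a sink of $\overrightarrow{\Gamma}_U(B,t_0)\cong\overrightarrow{\Gamma}(B_s^\dag,s)$. Hence the exchange quiver of $\mathcal{A}_U$ has a sink, so by Remark~\ref{rmkquiver}(iii) the matrix $B_s^\dag$ admits a green-to-red sequence.

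Finally I would transport this from $B_s^\dag$ to $B_t^\dag$. The mutations connecting $s$ and $t$ in the proof of Theorem~\ref{thmquiver} are all in directions lying outside $U$, hence they also connect the seeds $s^U$ and $t^U$ of $\mathcal{A}_U$ (where $t^U$ is $t$ with the variables of $U$ frozen); consequently the principal part of the exchange matrix of $\mathcal{A}_U$ at $t^U$ is precisely the submatrix of $B_t$ on the indices $i$ with $x_{i;t}\notin U$, that is, exactly the $W\times W$-indexed submatrix $B_t^\dag$. Thus $B_t^\dag$ is an exchange matrix of $\mathcal{A}_U$, and since $B_s^\dag$ admits a green-to-red sequence, Proposition~\ref{promu} applied to $\mathcal{A}_U$ yields that $B_t^\dag$ does as well, which is the claim. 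I expect the only delicate point—the main, though mild, obstacle—to be the bookkeeping in this last step: correctly matching the index set $W$ at $t$ with the mutable directions of $\mathcal{A}_U$ at $t^U$, so that the abstractly defined exchange matrix of $\mathcal{A}_U$ at $t^U$ is literally the combinatorially defined submatrix $B_t^\dag$ of $B_t$.
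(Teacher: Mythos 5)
Your proposal is correct and follows essentially the same route as the paper's proof: same choice of $U=[\mathbf{x}_t]\setminus W$, the seed $s$ with $[\mathbf{x}_s]=B_U[\mathbf{x}_{t_0}]$, Theorem~\ref{thmquiver}, Remark~\ref{rmkquiver}(iii), and Proposition~\ref{promu}. The only cosmetic difference is that you identify $t$ as a sink of $\overrightarrow{\Gamma}_U(B,t_0)$ by recognizing it as the Bongartz co-completion of $U$ and invoking Corollary~\ref{corunique}(ii), whereas the paper reads the sink property directly off the definition of $W$; likewise your final bookkeeping (mutations between $s$ and $t$ avoid the directions of $U$, so the exchange matrix of the frozen algebra at $t$ is $B_t^\dag$ up to permutation) is just a more explicit version of the paper's closing remark.
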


\begin{proof}
Put $U:=[{\bf x}_t]\setminus W$ and let $\Sigma_s$ be a seed of $\mathcal A$ such that $[{\bf x}_s]=B_U[{\bf x}_{t_0}]$. Put $V:=[{\bf x}_s]\setminus U$ and let $B_s^\dag$ be the $V\times V$-indexed submatrix of $B_s$, and $\mathcal A(B_s^\dag,s)$ be a cluster algebra with initial exchange matrix $B_s^\dag$ at $s$.

By Theorem \ref{thmquiver}, $\overrightarrow{\Gamma}_U(B,t_0)$ is isomorphic to the exchange quiver $\overrightarrow{\Gamma}(B_s^\dag,s)$. It follows from the definition of $W$ that the seed $\Sigma_t$ corresponds to a sink in $\overrightarrow{\Gamma}_U(B,t_0)\cong \overrightarrow{\Gamma}(B_s^\dag,s)$. Then by Remark \ref{rmkquiver} (iii), we know that the initial exchange matrix $B_s^\dag$ of $\mathcal A(B_s^\dag,s)$ admits a green-to-red sequence.

 Since both the seeds $\Sigma_s$ and $\Sigma_t$ correspond to vertices of $\overrightarrow{\Gamma}_U(B,t_0)\cong \overrightarrow{\Gamma}(B_s^\dag,s)$, we know that $B_t^\dag$ (up to a permutation) is an exchange matrix of $\mathcal A(B_s^\dag,s)$. Thus by Proposition \ref{promu}, it admits a green-to-red sequence.
\end{proof}


Now we give an elementary proof of Proposition \ref{promu} based on the sign-coherence of $C$-matrices.

\begin{proof}[Proof of Proposition \ref{promu}]
It suffices to prove that if $B$ has a green-to-red sequence, then so does  $$B^\prime:=\mu_k(B)$$ for any $k=1,\cdots,n$. Recall that $B$ is the exchange matrix at the vertex $t_0\in\mathbb T_n$.

Let $\overset{\leftarrow}{\mu}$ be a green-to-red sequence of $B$ and denote by $t$ the vertex in $\mathbb T_n$ connected with $t_0$ by the mutation sequence $\overset{\leftarrow}{\mu}$. By the definition of green-to-red sequence, we know that the $C$-matrix $C_{t}^{B;t_0}$ is a non-positive matrix. Then by \cite{qin}*{Section 2.3}, we get that both the $C$-matrix $C_{t}^{B;t_0}$ and the $G$-matrix $G_t^{B;t_0}$ are actually equal to the negative identity matrix $-I_n$ up to a permutation $\sigma$ on the column vectors of $-I_n$. For this reason, we denote by $$C_{t}^{B;t_0}=-P_\sigma=G_t^{B;t_0}.$$ Clearly, $P_\sigma\cdot P_\sigma^{\rm T}=I_n$ and thus $P_\sigma^{-1}=P_\sigma^{\rm T}$.

Now we show that $\mu_{k^\prime}\circ\overset{\leftarrow}{\mu}\circ\mu_k$ is a green-to-red sequence of $B^\prime=\mu_k(B)$, where $k^\prime=\sigma^{-1}(k)$. Let $t_0^\prime$ and $t^\prime$ be the two vertices in $\mathbb T_n$ given as follows:
$$\begin{xy}(0,0)*+{t_0}="A",(15,0)*+{t_0^\prime}="B",\ar@{-}^{k}"A";"B" \end{xy}\;\;\;\text{and}\;\;\;\begin{xy}(0,0)*+{t}="A",(15,0)*+{t'}="B",\ar@{-}^{k^\prime}"A";"B" \end{xy}$$
Thus we can go from $t_0^\prime$ to $t^\prime$ in $\mathbb T_n$ along the mutation sequence  $\mu_{k^\prime}\circ\overset{\leftarrow}{\mu}\circ\mu_k$. We want to show that the $C$-matrix $C_{t^\prime}^{B^\prime;t_0^\prime}$ is a non-positive matrix.

Nakanishi-Zelevinsky give two formulas in \cite{nz}*{Equalities (1.16),  (1.15)} to describe how the $C$-matrices change under the initial seed mutation and the final seed mutation, based on the sign-coherence of $C$-matrices. By their formulas, we have
\begin{eqnarray}
\label{eqn:cinitial} 
C^{B^\prime; t_0^\prime}_t&\xlongequal{t\; {\rm fixed}}&\big(J_k+[-\varepsilon_k\cdot B^{k\bullet}]_+\big)\cdot C^{B; t_0}_t,\\
\label{eqn:cfinal}
C^{B^\prime; t_0^\prime}_{t'}&\xlongequal{t_0^\prime\; {\rm fixed}}& C^{B^\prime; t_0^\prime}_t\cdot \big(J_{k^\prime}+[\varepsilon_{k^\prime}\cdot B_t^{k^\prime\bullet}]_+\big)
,
\end{eqnarray}
where 
\begin{itemize}
\item[(a)]$J_{\ell}$ denotes the matrix obtained from $I_n$ by replacing its $(\ell,\ell)$ entry with $-1$;
    \item [(b)] $\varepsilon_{k}$ is the sign of the $k$th column of $C_{t_0}^{-B_t;t}$, which is well defined, thanks to the sign-coherence; 
    \item[(c)]  $\varepsilon_{k^\prime}$ is the sign of the $k^\prime$th column of $C_t^{B^\prime;t_0^\prime}$, which is well defined, thanks to the sign-coherence; 
    \item[(d)] $A^{\ell\bullet}$ denotes the matrix obtained from $A=(a_{ij})$ by setting all entries outside its $\ell$-row to zeros;
    \item[(e)] $[A]_+:=([a_{ij}]_+)$ for a matrix $A=(a_{ij})$.
\end{itemize}

By \cite{nz}*{(1.12)} and since $C_{t}^{B;t_0}=-P_\sigma$, we have 
$$C_{t_0}^{-B_t;t}=(C_{t}^{B;t_0})^{-1}=(-P_{\sigma})^{-1}=-P_{\sigma}^T.$$
Then by the definition of $\varepsilon_{k}$, we have  $\varepsilon_{k}=-1$. Then the equality (\ref{eqn:cinitial}) becomes the following equality.
$$C^{B^\prime; t_0^\prime}_t=\big(J_k+[B^{k\bullet}]_+\big)\cdot (-P_\sigma)=-\big(J_k+[B^{k\bullet}]_+\big)\cdot P_\sigma.$$
We can see that the $k^\prime$th column (i.e., $\sigma^{-1}(k)$th column) of $C_t^{B^\prime;t_0^\prime}$ is just the $k$th column of $-(J_k+[B^{k\bullet}]_+\big)$, which coincides with the $k$th column of $I_n$. Then by the definition of  $\varepsilon_{k^\prime}$, we have  $\varepsilon_{k^\prime}=1$. Then the equality (\ref{eqn:cfinal}) becomes the following equality.
\begin{eqnarray}
C^{B^\prime; t_0^\prime}_{t'}&=&C^{B^\prime; t_0^\prime}_t\cdot \big(J_{k^\prime}+[ B_t^{k^\prime\bullet}]_+\big)\nonumber\\
&=&-\big(J_k+[B^{k\bullet}]_+\big)\cdot P_\sigma \cdot \big(J_{k^\prime}+[ B_t^{k^\prime\bullet}]_+\big).\nonumber
\end{eqnarray}

By \cite{nz}*{(2.8)}, we have $G_t^{B;t_0}B_t=BC_t^{B;t_0}$. Then by  $C_{t}^{B;t_0}=-P_\sigma=G_t^{B;t_0}$, we get
 $P_\sigma B_t=BP_\sigma$. Thus we have $P_\sigma [B_t^{k^\prime\bullet}]_+=[B^{k\bullet}]_+P_\sigma$ and 
 $$P_\sigma\cdot \big(J_{k^\prime}+[ B_t^{k^\prime\bullet}]_+\big)=\big(J_k+[B^{k\bullet}]_+\big)\cdot P_\sigma.$$
 So we have
\begin{eqnarray}
C^{B^\prime; t_0^\prime}_{t'}
&=&-\big(J_k+[B^{k\bullet}]_+\big)\cdot P_\sigma \cdot \big(J_{k^\prime}+[ B_t^{k^\prime\bullet}]_+\big)\nonumber\\
&=&-\big(J_k+[B^{k\bullet}]_+\big)\cdot \big(J_k+[B^{k\bullet}]_+\big)\cdot P_\sigma.\nonumber
\end{eqnarray}

Notice that $\big(J_k+[B^{k\bullet}]_+\big)^2=I_n$. So we get
$C^{B^\prime; t_0^\prime}_{t'}=-P_\sigma$, which is a non-positive matrix. This implies that $\mu_{k^\prime}\circ\overset{\leftarrow}{\mu}\circ\mu_k$ is a green-to-red sequence of $B^\prime=\mu_k(B)$.
\end{proof}

\subsection{Application to $Y$-patterns}
Let $\Sigma^Y=\{t\mapsto \Sigma_t^Y\}_{t\in\mathbb T_n}$ be a $Y$-pattern over a universal semifield ${\mathbb Q}_\mathrm{sf}(u_1,\cdots,u_n)$ such that  $${\bf y}_{t_0}=(y_{1;t_0},\cdots,y_{n;t_0})=(u_1,\cdots,u_n),$$ where $\Sigma_{t_0}^Y=({\bf y}_{t_0}, B_{t_0})$ is the $Y$-seed at the rooted vertex $t_0\in\mathbb T_n$. The coefficient variables appearing in $\Sigma^Y=\{t\mapsto \Sigma_t^Y\}_{t\in\mathbb T_n}$ are called {\em $y$-variables}. 

Notice that each $y$-variable $y_{k;t}$ belongs to ${\mathbb Q}_\mathrm{sf}(y_{1;t_0},\cdots,y_{n;t_0})$. Thus we can consider the tropical evaluation of $y_{k;t}$ in $\mathbb P=\trop(y_{1;t_0},\cdots,y_{n;t_0})$ and it takes the form
 $$y_{k;t}|_{\mathbb P}=y_{1;t_0}^{c_{1k}}\cdots y_{n;t_0}^{c_{nk}},$$
where $\mathbf{ c}_{k;t}^{B_{t_0};t_0}=(c_{1k},\cdots,c_{nk})^\mathrm{T}$ is the $k$th column vector of the $C$-matrix $C_t^{B_{t_0};t_0}$ \cite{fziv}.
Thanks to sign-coherence, we have that either $\mathbf{ c}_{k;t}^{B_{t_0};t_0}\in\mathbb N^n$ or $\mathbf{ c}_{k;t}^{B_{t_0};t_0}\in(-\mathbb N)^n$.

\begin{definition}[Positive and negative $y$-variables]
Keep the above notations. The $y$-variable $y_{k;t}$ is called \emph{positive} if $\mathbf{c}_{k;t}^{B_{t_0};t_0}\in\mathbb N^n$ and it is called \emph{negative} if $\mathbf{ c}_{k;t}^{B_{t_0};t_0}\in(-\mathbb N)^n$.
\end{definition}

\begin{lemma}[\cite{ckq}] \label{lemckq}
Let $\Sigma^Y=\{t\mapsto \Sigma_t^Y\}_{t\in\mathbb T_n}$ be a $Y$-pattern over a universal semifield ${\mathbb Q}_\mathrm{sf}(u_1,\cdots,u_n)$ such that $${\bf y}_{t_0}=(y_{1;t_0},\cdots,y_{n;t_0})=(u_1,\cdots,u_n),$$ where $\Sigma_{t_0}^Y=({\bf y}_{t_0}, B_{t_0})$ is the $Y$-seed at the rooted vertex $t_0\in\mathbb T_n$. Suppose that $\Sigma=\{t\mapsto \Sigma_t\}_{t\in\mathbb T_n}$ is a cluster pattern over a semifield $\mathbb P$ such that it has the same exchange matrix with $\Sigma^Y$ at the rooted vertex $t_0\in\mathbb T_n$. Then $\alpha: y_{k;t}\mapsto x_{k;t}$ induces a well-defined map from the $y$-variables of $\Sigma^Y$ to the cluster variables of $\Sigma$.
\end{lemma}

\begin{proof}
In the finite type case, this result can be found in \cite{Sherman-Bennett-2019}*{Corollary 1.2} and the general case can be found in \cite{ckq}*{Theorem 7.6}. For convenience of the reader, we include a proof here, which is slightly different from the one in  \cite{ckq}. However, the idea for the proof essentially comes from \cite{ckq}.

In order to show that the map $\alpha: y_{k;t}\mapsto x_{k;t}$ is well defined, it suffices to show that 
if $y_{k;t}=y_{j;t^\prime}$ for some $j$ and $t^\prime$, then $x_{k;t}=x_{j;t^\prime}$. Since we can choose any vertex of $\mathbb T_n$ as a rooted vertex. It suffices for us to show the case that if $y_{k;t}=y_{j;t_0}$, then $x_{k;t}=x_{j;t_0}$.

By Proposition \ref{proindepen}, we can assume without loss of generality that the cluster pattern $\Sigma=\{t\mapsto \Sigma_t\}_{t\in\mathbb T_n}$ has principal coefficients at the vertex $t_0$ and we write the coefficients tuple in the seed $\Sigma_t$ as ${\bf y}_t^{\rm pr}=(y_{1;t}^{\rm pr},\cdots,y_{n;t}^{\rm pr})$ to distinguish the coefficients tuple in the $Y$-seed $({\bf y}_t, B_t)$ of the given $Y$-pattern.

We denote by $\hat y_{k;t}^{\rm pr}:=y_{k;t}^{\rm pr}\cdot \prod\limits_{i=1}^nx_{i;t}^{b_{ik;t}}$ and $\hat {\bf y}_t^{\rm pr}=(\hat y_{1;t}^{\rm pr},\cdots,\hat y_{n;t}^{\rm pr})$. By \cite{fziv}*{Proposition 3.9}, we know that $\{t\mapsto (\hat {\bf y}_t^{\rm pr},B_t)\}_{t\in\mathbb T_n}$ forms a $Y$-pattern. Now we have three $Y$-patterns: $\Sigma^Y=\{t\mapsto \Sigma_t^Y\}_{t\in\mathbb T_n}$, $\{t\mapsto (\hat {\bf y}_t^{\rm pr},B_t)\}_{t\in\mathbb T_n}$ and the underlying $Y$-pattern $\{t\mapsto ({\bf y}_t^{\rm pr},B_t)\}_{t\in\mathbb T_n}$ of the cluster pattern $\Sigma=\{t\mapsto \Sigma_t\}_{t\in\mathbb T_n}$. The three $Y$-patterns have the same initial exchange matrix $B_{t_0}$.
Since the $Y$-pattern $\Sigma^Y=\{t\mapsto \Sigma_t^Y\}_{t\in\mathbb T_n}$ is over a universal semifield ${\mathbb Q}_\mathrm{sf}(u_1,\cdots,u_n)$ with $${\bf y}_{t_0}=(y_{1;t_0},\cdots,y_{n;t_0})=(u_1,\cdots,u_n),$$
we know that $y_{k;t}=y_{j;t_0}$ implies that $\hat y_{k;t}^{\rm pr}=\hat y_{j;t_0}^{\rm pr}$ and $y_{k;t}^{\rm pr}=y_{j;t_0}^{\rm pr}$. Notice that $\hat y_{k;t}^{\rm pr}=y_{k;t}^{\rm pr}\cdot \prod\limits_{i=1}^nx_{i;t}^{b_{ik;t}}$ and $\hat y_{j;t_0}^{\rm pr}=y_{j;t_0}^{\rm pr}\cdot \prod\limits_{i=1}^nx_{i;t_0}^{b_{ij;t_0}}$. Hence, we have $\prod\limits_{i=1}^nx_{i;t}^{b_{ik;t}}=\prod\limits_{i=1}^nx_{i;t_0}^{b_{ij;t_0}}$ and thus 

\begin{eqnarray}\label{eqn:ufd}
\prod\limits_{i=1}^nx_{i;t}^{[b_{ik;t}]_+}\cdot \prod\limits_{i=1}^nx_{i;t_0}^{[-b_{ij;t_0}]_+}=\prod\limits_{i=1}^nx_{i;t_0}^{[b_{ij;t_0}]_+}\cdot \prod\limits_{i=1}^nx_{i;t}^{[-b_{ik;t}]_+}.
\end{eqnarray}

Notice that the above equality is an equality in the upper cluster algebra $\mathcal U$ of the cluster pattern $\Sigma=\{t\mapsto \Sigma_t\}_{t\in\mathbb T_n}$ with principal coefficients at the vertex $t_0$. By \cite{GHK15}*{Corollary 4.7}, we know that $\mathcal U$ is a unique factorization domain (this result is also proved in \cite{ckq} with a very different method). By \cite{GLS13}*{Theorem 3.1} (proved for cluster algebras, but still true for upper cluster algebras), we know that the cluster variables in $\mathcal U$ are irreducible elements. By \cite{GLS13}*{Corollary 2.3 (ii)} (proved for cluster algebras, but still true for upper cluster algebras), we know that two cluster variables are associate in $\mathcal U$ if and only if they are equal. Based on the above reasons and by the equality (\ref{eqn:ufd}), we get
$$\prod\limits_{i=1}^nx_{i;t}^{[b_{ik;t}]_+}=\prod\limits_{i=1}^nx_{i;t_0}^{[b_{ij;t_0}]_+}\;\;\;\text{ and }\;\;\; \prod\limits_{i=1}^nx_{i;t}^{[-b_{ik;t}]_+}=\prod\limits_{i=1}^nx_{i;t_0}^{[-b_{ij;t_0}]_+}.$$
Then by $y_{k;t}^{\rm pr}=y_{j;t_0}^{\rm pr}$, we get that the exchange binomial of the seed $\Sigma_t$ in direction $k$ and the exchange binomial of the seed $\Sigma_{t_0}$ in direction $j$ are equal, i.e.,

\begin{eqnarray}\label{eqn:bino}
\frac{y_{k;t}^{\rm pr}\prod\limits_{i=1}^nx_{i;t}^{[b_{ik;t}]_+}+\prod\limits_{i=1}^nx_{i;t}^{[-b_{ik;t}]_+}}{1\oplus y_{k;t}^{\rm pr}}=\frac{y_{j;t_0}^{\rm pr}\prod\limits_{i=1}^nx_{i;t_0}^{[b_{ij;t_0}]_+}+\prod\limits_{i=1}^nx_{i;t_0}^{[-b_{ij;t_0}]_+}}{1\oplus y_{j;t_0}^{\rm pr}}.
\end{eqnarray}
Hence, we have $x_{k;t}x_{k;t}^\prime=x_{j;t_0}x_{j;t_0}^\prime$, where $x_{k;t}^\prime$ (resp., $x_{j;t_0}^\prime$) is the new cluster variable in the seed $\mu_k(\Sigma_t)$ (resp., $\mu_j(\Sigma_{t_0})$). Since $\mathcal U$ is a unique factorization domain and the cluster variables are irreducible in $\mathcal U$ and by \cite{GLS13}*{Corollary 2.3 (ii)} again, we know that $x_{k;t}x_{k;t}^\prime=x_{j;t_0}x_{j;t_0}^\prime$ implies either $(x_{k;t},x_{k;t}^\prime)=(x_{j;t_0},x_{j;t_0}^\prime)$ or $(x_{k;t},x_{k;t}^\prime)=(x_{j;t_0}^\prime,x_{j;t_0})$.

In the remaining part, we will rule out the possibility of $(x_{k;t},x_{k;t}^\prime)=(x_{j;t_0}^\prime,x_{j;t_0})$ under the condition $y_{k;t}^{\rm pr}=y_{j;t_0}^{\rm pr}$.

Let $C_t=(c_{ij})$ and $G_t=(g_{ij})$ be the $C$-matrix and $G$-matrix of the seed $\Sigma_t$ with respect to the initial seed $\Sigma_{t_0}$. By Theorem \ref{thmcg}, we have
\begin{eqnarray}\label{eqn:cgid}
(S^{-1}G_t^{\rm T}S)\cdot C_t=I_n,
\end{eqnarray}
where $S=\mathrm{diag}(s_1,\dots,s_n)$ is a skew-symmetrizer of $B_{t_0}$.
By $y_{k;t}^{\rm pr}=y_{j;t_0}^{\rm pr}$, we know that the $k$th column of $C_t$ is ${\bf e}_j$, where ${\bf e}_j$ is the $j$th column of the identity matrix $I_n$. By comparing the $k$th column of the two sides of the equality (\ref{eqn:cgid}), we get that the $j$th column of $(S^{-1}G_t^{\rm T}S)$ equals the $k$th column of $I_n$. In particular, the $(k,j)$-entry $s_k^{-1}\cdot g_{jk}\cdot s_j$ of $(S^{-1}G_t^{\rm T}S)$ equals $1$, and thus $$g_{jk}=\frac{s_k}{s_j}>0.$$

Recall that we have either $(x_{k;t},x_{k;t}^\prime)=(x_{j;t_0},x_{j;t_0}^\prime)$ or $(x_{k;t},x_{k;t}^\prime)=(x_{j;t_0}^\prime,x_{j;t_0})$. We claim that $x_{k;t}$ can not be $x_{j;t_0}^\prime$. Otherwise, the $j$th component of the $g$-vector of $x_{k;t}=x_{j;t_0}^\prime$ is $-1$, because $x_{j;t_0}^\prime$ is the new cluster variable in $\mu_j(\Sigma_{t_0})$. This contradicts $g_{jk}=\frac{s_k}{s_j}>0$. So the possibility of $(x_{k;t},x_{k;t}^\prime)=(x_{j;t_0}^\prime,x_{j;t_0})$ is ruled out. Thus we must have $(x_{k;t},x_{k;t}^\prime)=(x_{j;t_0},x_{j;t_0}^\prime)$. In particular, $x_{k;t}=x_{j;t_0}$. So the map $\alpha: y_{k;t}\mapsto x_{k;t}$ is well defined.
\end{proof}

\begin{theorem}\label{thmpoisson}
Let $\Sigma^Y=\{t\mapsto \Sigma_t^Y\}_{t\in\mathbb T_n}$ be a $Y$-pattern over a universal semifield ${\mathbb Q}_\mathrm{sf}(u_1,\cdots,u_n)$ such that $${\bf y}_{t_0}=(y_{1;t_0},\cdots,y_{n;t_0})=(u_1,\cdots,u_n),$$ where $\Sigma_{t_0}^Y=({\bf y}_{t_0}, B_{t_0})$ is the $Y$-seed at the rooted vertex $t_0\in\mathbb T_n$. Then each $Y$-seed $(\mathbf{ y}_t, B_t)$ of $\Sigma^Y$ up to a $Y$-seed equivalence is uniquely determined by the set of negative $y$-variables in $\mathbf{ y}_t$.
\end{theorem}

\begin{proof}

Let $V=\{y_{k_1;t},\cdots,y_{k_\ell;t}\}$ be the set of negative $y$-variables in $\mathbf{ y}_t$. Then the $i$-column vector of the $C$-matrix $C_t^{B_{t_0};t_0}$ belongs to $\mathbb N^n$ for any $i\neq k_1,\cdots,k_\ell$.

Let $\Sigma=\{t\mapsto \Sigma_t\}_{t\in\mathbb T_n}$ be a cluster pattern such that its underlying $Y$-pattern is exactly the given $Y$-pattern $\Sigma^Y$. Clearly, such cluster patterns always exist. Let $\alpha: y_{k;t}\mapsto x_{k;t}$ be the well-defined map in Lemma \ref{lemckq}. We set
$$U=\alpha(V)=\{\alpha(y_{k_1;t}),\cdots,\alpha(y_{k_\ell;t})\}=\{x_{k_1;t},\cdots,x_{k_\ell;t}\},$$ which is a subset of $[\mathbf{ x}_t]$.
Because the $i$th column vector of $C_t^{B_{t_0};t_0}$ is non-negative for any $i\neq k_1,\cdots,k_\ell$, we know that $[\mathbf{ x}_t]$ is the Bongartz completion of $U$ with respect to $t_0$, i.e., $$[{\bf x}_t]=B_U[{\bf x}_{t_0}]=B_{\alpha(V)}[{\bf x}_{t_0}].$$
Hence, $[{\bf x}_t]$ is uniquely determined by $V$, because the initial seed $\Sigma_{t_0}=({\bf x}_{t_0}, {\bf y}_{t_0}, B_{t_0})$ is given.
 Then by Theorem \ref{non-labeled-cluster-thm}, we get that the seed $\Sigma_t=(\mathbf{ x}_t,\mathbf{ y}_t,B_t)$ is uniquely determined by $V$ up to a seed equivalence. This in particular implies that the $Y$-seed $\Sigma_t^Y=(\mathbf{ y}_t, B_t)$ is uniquely determined by $V$ up to  a $Y$-seed equivalence.
\end{proof}

Notice that the existence of Bongartz completion in above proof follows from the definition of Bongartz completion. Actually, the above proof mainly depends on the uniqueness of Bongartz completion. Similarly, if one uses the definition of Bongartz co-completion and the uniqueness of Bongartz co-completion in Corollary \ref{corunique}(iii), one can show the following dual result.

\begin{theorem}
Let $\Sigma^Y=\{t\mapsto \Sigma_t^Y\}_{t\in\mathbb T_n}$ be a $Y$-pattern over a universal semifield ${\mathbb Q}_\mathrm{sf}(u_1,\cdots,u_n)$ such that  $${\bf y}_{t_0}=(y_{1;t_0},\cdots,y_{n;t_0})=(u_1,\cdots,u_n),$$ where $\Sigma_{t_0}^Y=({\bf y}_{t_0}, B_{t_0})$ is the $Y$-seed at the rooted vertex $t_0\in\mathbb T_n$.  Then each $Y$-seed $(\mathbf{ y}_t, B_t)$ of $\Sigma^Y$ up to a $Y$-seed equivalence is uniquely determined by the set of positive $y$-variables in $\mathbf{ y}_t$.
\end{theorem}

We remark that the results in this subsection is actually inspired by Changjian Fu's online talk \cite{fu2021} on January 15, 2021.

\section*{Acknowledgement}
The authors are grateful to Changjian Fu, Bernhard Keller and Dylan Rupel for their helpful comments and advices. P. Cao would like to thank Hualin Huang, Zengqiang Lin and Zhankui Xiao for their hospitality during his visiting Huaqiao University (March 16--April 9, 2021), where some of the results of this article were carried out.

P. Cao is supported by the ERC (Grant No. 669655), the NSF of China (Grant No. 12071422), and  the
Guangdong Basic and Applied Basic Research Foundation (Grant No. 2021A1515012035). Y. Gyoda is supported by JSPS KAKENHI Grant Number JP20J12675. T. Yurikusa is supported by JSPS KAKENHI Grant Numbers JP20J00410, JP21K13761.
\bibliography{myrefs}
\end{document}